\def\R{{\mathbb R}}
\def\N{{\mathbb N}}
\def\Z{{\mathbb Z}}
\def\1{{1\!\!\!1}}
\def\Id{{\mathbb I}}
\def\E{{\mathbb E}}
\def\eps{{\epsilon}}
\def\P{{\mathbb P}}
\def\cal{\mathcal}
\def\dist{{\rm{dist}}}
\def\eps{\varepsilon}
\newcommand{\be}{\begin{equation}}
\newcommand{\ee}{\end{equation}}
\numberwithin{equation}{section}
\newtheorem{theorem}{Theorem}
\newtheorem{prop}{Proposition}[section]
\newtheorem{cor}{Corollary}[section]
\newtheorem{defi}{Definition}[section]
\newtheorem{lemma}{Lemma}[section]
\title{Martin boundary of a killed non-centered random walk in a general cone.}
\author{Irina Ignatiouk-Robert}
\address{
{Universit\'e de Cergy-Pontoise,}
{D\'epartement de math\'ematiques,}
{2, Avenue Adolphe Chauvin,}
{95302 Cergy-Pontoise Cedex,}
{France}}
\date{\today}
\email{Irina.Ignatiouk@u-cergy.fr}
\keywords{Harmonic function, random walk, exit time, renewal function} 
\subjclass{60J45, 31C05, 60J10, 60J50}
\date{Received: date / Accepted: date}
\begin{document}
\begin{abstract} We investigate  Martin boundary  for a non-centered random walk on $\Z^d$ killed up on the time $\tau_\vartheta$ of the first exit from a convex cone  with a vertex at $0$.  The approach combines  large deviation estimates, the ratio limite theorem and the ladder height process. The results are applied to identify the Martin boundary  for a  random walk killed upon the first exit from  a convex cone having $C^1$ boundary. 
\end{abstract}
\maketitle

\section{Introduction.}

Before formulating our results we recall the  definition of the Martin boundary and the main classical results of this domain. 

Consider a transient irreducible sub-stochastic Markov chain $(Z(t))$ on a countable state space $E\subset \Z^d$ with transition probabilities $p(x,y), x,y\in E$. The Green function $G(x,y)$  and the Martin kernel $K(x,y)$ associated with the Markov chain $(Z(n))$ rae defined respectively by 
\[
G(x,y) ~=~ \sum_{n=0}^\infty \P_x(Z(n) = y)  \quad x,y\in E,
\]
and 
\[
K(x,y) ~=~ \frac{G(x,y)}{G(z_0,y)}, \quad x,y\in E,
\]
where $\P_x$ denotes the probability measure on the set of trajectories of $(Z(t))$ corresponding to the initial state $Z(0)=x$ and  $z_0$ is a given reference point in $E$. For irreducible Markov chains, the family of functions $(K(\cdot, y), \; y\in E)$ is relatively compact with respect to the topologie of point-wise convergence~: for any sequence of points $(y_n)\in E^\N$, there is a subsequence $(y_{n_k})$ for which the sequence of functions $K(\cdot, y_{n_k})$ converges point-wise on $E$. The Martin  compactification $E_M$ is defined as the unique  smallest compactification of
the  set $E$ for  which the  Martin kernels $K(z,\cdot)$  extend continuously~: a sequence $z_n\in E$  converges to a
point on a Martin  boundary $\partial_M E = E_M\setminus E$ of $E$ if it leaves  every finite subset on $E$ and  the sequence  of functions  $K(\cdot,z_n)$  converges point-wise.  

Recall that a function $h : E\to\R_+$ is harmonic for $(Z(t))$
if $
\E_z(h(Z(1))) ~=~ h(z)$ 
for all $z\in E$. By the Poisson-Martin
representation  theorem, for  every  non-negative  harmonic function  $h$  there exists  a
positive Borel measure $\nu$ on $\partial_M E$ such that
\[
h(z) = \int_{\partial_M E} K(z,\eta) \, d\nu(\eta) 
\]
By Convergence theorem, the sequence
$(Z(t))$ converges $\P_z$ almost surely for every $z\in E$ to a $\partial_M E$ valued
random variable. The Martin boundary provides therefore the non-negative harmonic functions and 
shows how the Markov chain $(Z(t))$ goes to infinity.  

The concept of the Martin compactification was introduced  for a countable Markov chain by Doob~\cite{Doob} based on the ideas of Martin~\cite{Martin} regarding harmonic functions  on Euclidean domains for Brownian motion.

To identify the Martin boundary on has to investigate all possible  limites of the Martin kernel $K(x,y_n)$ when $\|y_n\|\to\infty$. A large number of results in this domain has been obtained for homogeneous random walks. 
Classical results are those of Dynkin and Malyutov~\cite{Dynkin_Malyutov:1961} for a random walk on free groupes and Ney and Spietzer~\cite{Ney-Spitzer} for a random walk on $\Z^d$. For a wide literature of results where the Martin boundary was identifies for more general homogeneous Markov chains we refer to the book of Woess~\cite{Woess} and the references therein.

For non-homogeneous Markov chains, the problem of the explicit description of the Martin compactification is usually a highly non-trivial task, and up to now there are few results where the Martin boundary for a non-homogeneous Markov chain was was identified explicitly. For random walks on non-homogeneous trees the Martin boundary was described by Cartier~\cite{Cartier}.  Alili and  Doney~\cite{Alili-Doney} identified the Martin boundary for space-time random  walk $S(n)=(Z(n),n)$ for a  homogeneous random walk $Z(n)$ on $\Z$ killed  when hotting  the negative half-line
$\{z : z<0\}$. These results were obtained by using the one dimensional structure of the process.  Doob~\cite{Doob}  identified   the Martin boundary for Brownian motion on a half-space by using an explicit form of the Green function. 
A method of complex  analysis on the elliptic curves  was proposed by Kurkova
and  Malyshev~\cite{Kurkova-Malyshev} to identify the Martin boundary for nearest neighbor random walks with drift in $\N\times\Z$ and $\Z_+^2$.  In the papers Raschel~\cite{KilianRaschel:09}, Raschel~\cite{KilianRaschel:10}, Kurkova and Raschel~\cite{Kurkova-Raschel:11}  this method  was developed to investigate the exact asymptotics of the Green function and identify the  Martin boundary for  random walks in  $\Z_+^2$ having small steps and an absorption condition on the  boundary. Because of the use  of the  specific  elliptic  curves,  these methods  seem to  be difficult to apply for higher dimensions.

In   order  to  identify  the  Martin boundary of a partially  homogeneous random walk on a  half-space $\Z^{d-1}\times\N$, a large deviations approach combined with the method of  Choquet-Deny theory  and the ratio limit theorem of Markov-additive processes was proposed by Ignatiouk-Robert~\cite{Ignatiouk-half_space_2008,  Ignatiouk:2010}  .  The method of Choquet-Deny
theory was used there in order to identify the minimal harmonic functions. Next the  limiting behavior of the  Martin kernel was investigated by using  the
large  deviation   estimates  of  the Green  function  and a  ratio   limit  theorem. It should be mentioned that the methods of Choquet-Deny theory and the ratio
limit  theorem obtained in \cite{Ignatiouk-half_space_2008}  are  valid  only  for Markov-additive  processes,  i.e.   when  transition probabilities are invariant  with respect to the translations on  some directions.

In order to identify the Martin boundary for a non-centered random walk in $\Z^2$ killed upon the first exit from the positive quadrant $\Z_+^2$, Ignatiouk-Robert and Lorée~\cite{Ignatiouk-Loree} applied the methods of Ignatiouk-Robert~\cite{Ignatiouk-half_space_2008,  Ignatiouk:2010} for local Markov-additive processes, obtained from the original killed random walk in $\Z_+^2$ by removing one of the boundaries $\{(x,y)\in\Z^2~:  x =0\}$ or $\{(x,y)\in\Z^2~:  y =0\}$.
The Martin boundary of the original killed random walk was obtained next by using the large deviation estimates and the functional equations relating the Green function of the original random walk with the Green functions of the local random walks. Unfortunately, this method can be applied only in the case when the local processes, obtained by removing on of the boundaries $\{(x,y)\in\Z^2~:  x =0\}$ or $\{(x,y)\in\Z^2~:  y =0\}$, are Markov-additive.

For centered random walks in $\Z^d$, killed upon the first exit from some cone ${\cal C}\subset\R^d$, under various assumptions on the cone, the Martin boundary was identified  by Duraj and Wachtel in ~\cite{Duraj-Wachtel} and by Jetlir Duraj, Kilian Raschel, Pierre Tarrago and Vitali Wachtel in ~\cite{Duraj-Raschel-Tarrago-Wachtel}. These results use the method of the diffusion approximation of the centered random walks and prove that under some general assumptions, for centered random walks killed upon the first exit from a convex cone, the Martin boundary is reduced to one point.  

To my knowledge, for a non-centered random walk on $\Z^d$ killed upon the first exit from a general convex cone, the only related existing results  are those of Rodolphe Garbit, Kilian Raschel~\cite{Garbit-Raschel}  and  Duraj~\cite{JDuraj}. Garbit and Raschel~\cite{Garbit-Raschel}  investigated the exponential decay of the probability that a given multi-dimensional random walk stays in a convex cone up to time $n$ as $n\to\infty$. Duraj~\cite{JDuraj} proved  the existence of uncountably many nonnegative harmonic functions and obtained an explicit representation of some of them.  These results do not allow to identify the Martin boundary of the process. 

In the present paper, we investigate the Martin boundary for a non-centered random walk on $\Z^d$ killed upon the first exit from a convex cone with a vertex at $0$. A simple exemple of such a random walk is a random walk in a half-plane 
$\{x\in\Z^2~:~ x\cdot \gamma \geq 0\}$,  where $x\cdot \gamma$ denotes a usual scalar product in $\R^2$ of the vectors $x$ and $\gamma$, for an arbitrary non-zero vector $\gamma =(\gamma_1,\gamma_2)\in\R^2$. If the real number $\gamma_1/\gamma_2$ is irrational, such a random walk is not Markov-additive and the methods developed in the papers \cite{Ignatiouk-half_space_2008,  Ignatiouk:2010, Ignatiouk-Loree}  do not work.  

The main ideas of our approach are the following.

i) Our first result improves the ratio limit theorem of Ignatiouk-Robert~\cite{Ignatiouk-half_space_2008,  Ignatiouk:2010}. This result proves that under some general assumptions, 
whenever $u+ E\subset E$, 
\be\label{int_eq1} 
\P_{x+u}(Z(1)=y+u) ~\geq~ \P_x(Z(1)=t), \quad \forall x,y\in E. 
\ee
and 
\be\label{int_eq2}
\liminf_{n\to\infty} \frac{1}{\|y_n\|}\log G(0,y_n) \geq 0,
\ee
the following relations hold 
\[
\liminf_n ~\frac{G(z+u,y_n)}{G(z,y_n)} ~\geq~ 1, \quad \forall x,y\in E. 
\]

ii) As a straightforward consequence of our ratio limit theorem, we obtain the following property: if \eqref{int_eq1} and \eqref{int_eq2} hold and the sequence $(y_n)\in E^\N$ converges in the Martin compactification to some point $\eta\in\partial_M E$ then the limit function $K(x,\eta) = \lim_n K(x,y_n)$ satisfies the following relations 
\be\label{int_eq3}
K(x+u, \eta) \geq K(x, \eta), \quad \forall x\in E. 
\ee

iii) Next we consider the case when  the set $E$ is an intersection of $\Z^d$ with a closed convex cone ${\cal C}$ having a vertex at $0$, and  relations \eqref{int_eq1} hold for all $x,y,u\in E$. Our next result proves that whenever 
the time of the first exit of the process $(Z(t))$ from the cone ${\cal C}$  is non integrable, any harmonic function satisfying the inequalities \eqref{int_eq3} is proportional to the renewal function $V$ of the corresponding ladder height process. 

iv) With these results, for a homogeneous random walk $(Z(t))$ on $\Z^d$ with a non-zero mean step $m$, when the time of the first exit of the process $(Z(t))$ from $E$ is non-integrable, we are able to prove that 
\[
\lim_n \frac{G(x,y_n)}{G(0,y_n)}  = V(x), \quad \forall x\in E, 
\]
for any sequence  $(y_n)\in E^\N$ with $\lim_n \|y_n\| = + \infty$ and $\lim_n y_n/\|y_n\| = m/\|m\|$.  Relation \eqref{int_eq2} follows in this cas from the large deviation estimates of the transition probabilities of $(Z(t))$.

v) To investigate the limites 
\be\label{int_eq4}
\lim_n \frac{G(x,y_n)}{G(0,y_n)} \quad \text{as \; $\lim_n \|y_n\| = + \infty$ \; and \; $\lim_n y_n/\|y_n\| = q\in {\cal C}$,}
\ee 
when $q\not= m/\|m\|$, the method of the exponential chain of measure is applied. With our approach we are able to identify these limites for those direction $q$ for which  the time of the first exit from ${\cal C}$ of the corresponding twisted process $(Z_q(t))$ is non-integrable. 

vi) Our last result proves that under some general assumptions, in the case when a convex cone ${\cal C}$ has a $C^1$-boundary, the first time, when a homogeneous random walk $(Z(t))$ with a non-zero mean step $q\in{\cal C}$ exists from the cone ${\cal C}$, is always non-integrable. When combined with our previous results, this result allow to identify the limites \eqref{int_eq4} in this particular case for any direction $q\in{\cal C}$.

\section{Main Results} 

Our first result improves the ratio limit theorem of Ignatiouk-Robert~\cite{Ignatiouk-half_space_2008}.  We assume here that 

\begin{enumerate} 
\item[(A1)]{\em The Markov chain $(Z(t))$ is irreducible on $E\subset \Z^d$.}
\item[(A2)] {\em There are $C>0$ and $\delta > 0$ such that 
\[
\sup_{\alpha\in\R^d~: \|\alpha\| \leq \delta_0} ~\sup_{x\in E} \E_x\Bigl(\exp\bigl(\alpha \cdot (Z(1) - x)\bigr)\Bigr) ~<~C,
\]
where $\alpha\cdot x$ denotes the usual scalar product in $\R^d$. }
\item[(A3)] {\em There is $u\in\Z^d\setminus\{0\}$ such that $E + u \subset E$ and 
\be\label{main_result_e1}
p(x+u, y+u) ~\geq~ p(x,y), \quad \forall x,y \in E,
\ee
}
\item[(A4)] {\em For some $\tilde{n}\geq 1$, 
\be\label{main_result_e2} 
\inf_{x\in E} \P_x(Z(\tilde{n}) = x+u) ~>~ 0.
\ee
}
\end{enumerate} 
\noindent
The Markov chain $(Z(t))$ being sub-stochastic, it is convenient to introduce an additional absorbing state $\vartheta$ by letting 
\[
p(x,\vartheta) ~=~1 - \sum_{y\in E} p(x,y) \quad \text{and} \quad p(\vartheta,\vartheta) = 1. 
\]

\begin{theorem}\label{ratio_limit_theorem} Suppose that the conditions (A1)-(A4) are satisfied and  let a sequence  $(y_n)\in E^\N$  with $\lim_n\|y_n\| = +\infty$ satisfy the  inequalities 
\be\label{main_result_e3}
\liminf_{n\to\infty} \frac{1}{\|y_n\|}\log G(0,y_n) \geq 0. 
\ee
Then for any $z\in E$, 
\be\label{main_result_e4} 
\liminf_n ~\frac{G(z+u,y_n)}{G(z,y_n)} ~\geq~ 1
\ee
\end{theorem}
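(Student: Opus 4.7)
The plan is to introduce the auxiliary function $\phi_n(z) := G(z+u,y_n) - G(z,y_n)$, writing $g_n(z):=G(z,y_n)$, and to show that assumption (A3) makes $\phi_n$ a $p$-super-harmonic function with controlled defects at two points; after this, assumptions (A4), (A2), and the hypothesis~\eqref{main_result_e3} combine to deliver the ratio inequality.

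The first step is the super-harmonicity computation. Because $g_n$ satisfies $(I-P)g_n=\delta_{y_n}$ on $E$, and $g_n(\cdot+u)$ satisfies the analogous identity with respect to the shifted kernel $f\mapsto\bigl(z\mapsto\sum_y p(z+u,y)f(y)\bigr)$, a direct manipulation -- changing variable $y\leftrightarrow y-u$ in one of the sums and using $E+u\subset E$ -- yields for every $z\notin\{y_n,y_n-u\}$
\[
(I-P)\phi_n(z) = \sum_{y\in E+u}\bigl[p(z+u,y)-p(z,y-u)\bigr]\,g_n(y) + \sum_{y\in E\setminus(E+u)} p(z+u,y)\,g_n(y),
\]
which is $\geq 0$ by (A3) and the non-negativity of $p$. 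At $z=y_n$ and $z=y_n-u$ the same calculation produces defects $\geq -1$ and $\geq +1$ respectively, giving the global bound $(I-P)\phi_n\geq \delta_{y_n-u}-\delta_{y_n}$ on $E$. Applying the positivity-preserving Green operator and using $\phi_n(\vartheta)=0$ yields the key comparison
\[
G(z+u,y_n) \;\geq\; G(z,y_n-u),\qquad \text{whenever}\ y_n-u\in E,
\]
which one can iterate to $G(z+ku,y_n)\geq G(z,y_n-ku)$ for every $k\geq 1$.

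To close the loop, I would combine this with (A4): iterated it gives $\P_x(Z(k\tilde n)=x+ku)\geq c^k$ where $c:=\inf_x\P_x(Z(\tilde n)=x+u)>0$, and hence via the Markov property $g_n(z)\geq c^k G(z,y_n-ku)$ (up to a finite correction from the first $k\tilde n$ terms). A Harnack-type compactness argument, made possible by the exponential moment control from (A2), extracts from the normalized Green functions $g_n/g_n(0)$ subsequential limits: the limit $h$ is a positive harmonic function on $E$ and $\tilde\phi:=h(\cdot+u)-h$ is super-harmonic on $E$ and vanishes at $\vartheta$. The liminf hypothesis~\eqref{main_result_e3} forbids exponential decay of $g_n(0)$ along the sequence; any putative $z_0$ with $\tilde\phi(z_0)<0$ would, by iterating the super-harmonicity of $\tilde\phi$ in direction $u$ together with (A4), propagate into strict exponential decay of $g_n(0)$ as $\|y_n\|\to\infty$, a contradiction that forces $\tilde\phi\geq 0$ and hence the desired conclusion.

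The main obstacle is precisely this last quantitative step. A naive iteration of (A4) introduces a factor $c^k$ that must be absorbed by~\eqref{main_result_e3} without destroying the exponential gain needed to propagate negativity of $\tilde\phi$; the delicate balance is what forces the simultaneous use of (A2), (A4), and~\eqref{main_result_e3}, with (A2) supplying the large-deviation envelope within which the coordination takes place. I expect this to be the technical heart of the argument, and it is the reason the theorem cannot be reduced to the super-harmonicity of $\phi_n$ alone.
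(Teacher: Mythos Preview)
Your super-harmonicity computation is correct and gives the clean comparison $G(z+u,y_n)\geq G(z,y_n-u)$ whenever $y_n-u\in E$; this circle of ideas (the operator $A_u$, the Riesz decomposition of $T_u h$) is precisely what the paper uses---but for Theorem~\ref{main_result_th3}, not for Theorem~\ref{ratio_limit_theorem}. The gap you flag is real and, as far as I can tell, not closable along this route. Your inequality trades the starting-point shift $z\mapsto z+u$ for an endpoint shift $y_n\mapsto y_n-u$, and bounding $G(z,y_n-u)/G(z,y_n)$ from below by $1-o(1)$ is the original problem again (indeed (A4) only gives the wrong-direction bound $G(z,y_n)\geq c\,G(z,y_n-u)$ with $c<1$). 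The contradiction sketch does not work either: super-harmonicity of $\tilde\phi$ says only $\tilde\phi\geq P\tilde\phi$, which does not force $\tilde\phi(z_0+ku)$ to decay, and nothing links a single negative value $\tilde\phi(z_0)<0$ to exponential decay of $G(0,y_n)$. The hoped-for ``coordination'' of (A2), (A4) and \eqref{main_result_e3} has no mechanism here.

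The paper's proof is entirely different: a Bernoulli part decomposition combined with a path-wise coupling. After a reduction (Lemma~\ref{ratio_limit_lem0}, adding a holding probability) to the case $\eps:=\inf_x\min\{p(x,x),p(x,x+u)\}>0$, one realizes $Z$ as a process $W$ that at each step, with probability $2\eps$, independently either holds or jumps by $u$, and otherwise follows the residual kernel. Condition (A2) and the hypothesis \eqref{main_result_e3} then localize the dominant contribution to $G(z,y_n)$ to times $t>\kappa\|y_n\|$ on the event that the number $N_t$ of Bernoulli steps is $\approx\eps t$ and the number $L_t$ of $u$-jumps among them is $\approx N_t/2$ (Lemmas~\ref{rl_lemma1} and~\ref{rl_lemma2}). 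The key step (Lemma~\ref{ratio_limit_lem3}) is a coupling: in a path from $z$ to $y_n$, replace one chosen $u$-jump by a hold; the resulting path runs from $z+u$ to $y_n$, and by (A3) the pre-replacement segment---now shifted by $+u$---has weight at least as large. Counting multiplicities yields $\P_z(W(t)=y_n,\,N_t=N,\,L_t=L)\leq\frac{N-L+1}{L}\,\P_{z+u}(W(t)=y_n,\,N_t=N,\,L_t=L-1)$, and on the dominant event the factor is $(1+2\sigma)/(1-2\sigma)+o(1)$. Letting $\sigma\to0$ gives \eqref{main_result_e4}. The essential idea you are missing is this direct path-level exchange, which compares $G(z,y_n)$ and $G(z+u,y_n)$ at the \emph{same} endpoint and bypasses the $y_n\mapsto y_n-u$ detour altogether.
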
 

This result is an analogue of the ratio limit theorem obtained for Markov-additive processes in the paper \cite{Ignatiouk-half_space_2008}. The main idea of the proof of this theorem is the following~: 
because of \eqref{main_result_e3},  for any $z\in E$, the terms of the order $c\exp(-\delta \|y_n\|)$ give an
asymptotically negligible contribution to $G(z,y_n)$. To get \eqref{main_result_e4}, we decompose the quantities $G(z,y_n)$  into a negligible part
of the order $c\exp(-\delta \|y_n\|)$ and a main part $\Xi_\sigma(z,y_n)$. The main part is newt compared to $G(z+u,y_n)$ by using the Bernoulli part decomposition method.

The method of the Bernoulli part decomposition was initially proposed by Foley and McDonald~\cite{Foley-McDonald} for random walks in a half-plane. In the paper \cite{Ignatiouk-half_space_2008}, it was extended  for general Markov-additive processes, when the state space $E$ and the transition probabilities $p(x,y), x,y\in E$, of the process $(Z(t))$ are invariant with respect to the shifts on the vector $u$, e.i. when $E + u = E$ and $p(x+u, y+u) ~=~ p(x-u,y-u) = p(x,y)$ for all $x,y\in E$. In our setting, these relations are replaced respectively by $E+u \subset E$ and $p(x+u,y+u) \geq p(x,y)$. This is the main difficulty of our proof. To adapte the method of Bernoulli part decomposition to our case we combine this method with the method of coupling. 

\noindent 
The proof of Theorem~\ref{ratio_limit_theorem}  is given in Section~\ref{ratio_limit_th_proof}. 

\medskip
Next we apply Theorem~\ref{ratio_limit_theorem} to investigate the Martin boundary of the process $(Z(t))$. From now on, instead of the assumptions (A3) and (A4),  we will assume that 

\medskip 
\noindent
\begin{enumerate} 
\item[(A3')] {\em $0\in E$ and for any $u\in E$, 
\[
E + u \subset E \quad \text{and} \quad p(x+u,y+u) ~\geq~ p(x,y), \quad \forall x,y\in E,
\]}
\item[(A4')] {\em for any $u\in E\setminus\{0\}$, there is $\tilde{n}_u \in\N$ such that 
\[
\inf_{x\in E} \P_x(Z(\tilde{n}_u) = x+u) ~>~ 0.
\]}
\end{enumerate} 
The Martin kernel $K(x,y)$ will be defined with the reference point $x_0 = 0$:
\[
K(x,y) = \frac{G(x,y)}{G(0,y)}, \quad x,y\in E. 
\]
As a straightforward consequence of Theorem~\ref{ratio_limit_theorem} we obtain 
\begin{cor}\label{ratio_limit_cor} If a sequence  $(y_n)\in E^\N$  with $\lim_n\|y_n\| = +\infty$ satisfies \eqref{main_result_e3}, then under the hypotheses (A1), (A2), (A3') and (A4'), for any convergent in the Martin compactification $E_M$ subsequence $(y_{n_k})$,
the limite function 
\[
K(z, \eta) ~=~ \lim_{k\to\infty} K(z, y_{n_k}), \quad z,u\in E
\]
satisfies the inequality 
\be\label{main_result_e5} 
K(z + u, \eta) ~\geq~K(z,\eta), \quad \forall z\in E. 
\ee
\end{cor}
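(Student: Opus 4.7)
The plan is to deduce the corollary directly from Theorem~\ref{ratio_limit_theorem}. First I would observe that the stronger hypotheses (A3') and (A4') are engineered precisely so that, for every fixed $u \in E\setminus\{0\}$, the pair of assumptions (A3) and (A4) of Theorem~\ref{ratio_limit_theorem} is satisfied with this particular $u$ (combined with the standing assumptions (A1) and (A2)). In particular, the conclusion of Theorem~\ref{ratio_limit_theorem} applies to the given sequence $(y_n)$ for each such $u$, so that
\be\label{cor_e1}
\liminf_{n\to\infty}~\frac{G(z+u,y_n)}{G(z,y_n)}~\geq~1, \quad \forall z\in E,\ \forall u\in E\setminus\{0\}.
\ee
The case $u=0$ of \eqref{main_result_e5} being trivial, it suffices to prove \eqref{main_result_e5} for $u\in E\setminus\{0\}$.

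Next I would rewrite \eqref{cor_e1} in terms of the Martin kernel. Dividing both the numerator and denominator by $G(0,y_n)$ and using the definition $K(x,y)=G(x,y)/G(0,y)$, inequality \eqref{cor_e1} becomes
\[
\liminf_{n\to\infty}~\frac{K(z+u,y_n)}{K(z,y_n)}~\geq~1, \quad \forall z\in E.
\]
Now let $(y_{n_k})$ be any subsequence converging in the Martin compactification to a point $\eta\in\partial_M E$. By the very definition of Martin convergence, $K(z,y_{n_k})\to K(z,\eta)$ and $K(z+u,y_{n_k})\to K(z+u,\eta)$ as $k\to\infty$, for all $z,u\in E$.

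To conclude, I would distinguish two cases. If $K(z,\eta)>0$, then $K(z,y_{n_k})>0$ for all sufficiently large $k$, and one may pass to the limit in the identity
\[
K(z+u,y_{n_k}) ~=~ \frac{K(z+u,y_{n_k})}{K(z,y_{n_k})}\cdot K(z,y_{n_k}),
\]
obtaining $K(z+u,\eta)\geq 1\cdot K(z,\eta)=K(z,\eta)$, thanks to \eqref{cor_e1} and the standard fact that the liminf of a product is at least the product of the liminfs when one factor converges to a finite positive limit. If $K(z,\eta)=0$, then the inequality $K(z+u,\eta)\geq K(z,\eta)$ is immediate from the nonnegativity of $K$.

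There is no substantive obstacle here; the corollary is essentially a translation of Theorem~\ref{ratio_limit_theorem} into the Martin kernel framework, with the only mild point being the care required when $K(z,\eta)$ vanishes. The strengthening from (A3), (A4) to (A3'), (A4') is used precisely to ensure that the conclusion of Theorem~\ref{ratio_limit_theorem} may be applied simultaneously with every admissible shift $u\in E\setminus\{0\}$.
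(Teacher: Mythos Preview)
Your proof is correct and follows exactly the route the paper intends: the corollary is stated there as ``a straightforward consequence of Theorem~\ref{ratio_limit_theorem}'' with no further proof given, and your argument---specializing (A3'), (A4') to each fixed $u\in E\setminus\{0\}$ so that Theorem~\ref{ratio_limit_theorem} applies, then passing to the limit along the Martin-convergent subsequence---is precisely that straightforward deduction. The only cosmetic remark is that the case split on $K(z,\eta)=0$ can be avoided by noting that \eqref{cor_e1} gives $K(z+u,y_{n_k})\geq(1-\varepsilon)K(z,y_{n_k})$ for large $k$ and then letting $k\to\infty$ and $\varepsilon\to0$, but your version is equally valid.
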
 
Now we investigate the harmonic  functions $K(\cdot,\eta)$ satisfying \eqref{main_result_e5} by using the method of the ladder height process associated with the Markov chain $(Z(t))$. 
Before formulating our next result we recall the definition and some useful properties of the ladder height process obtained in my previous paper \cite{Ignatiouk-ladder_heights}. 
For this it is convenient to use the following notations~: for  a non-negative function $\varphi : E \to \R_+$, we let
\[
G\varphi(x) ~=~ \sum_{y\in E} G(x,y)\varphi(y), \quad x\in E, 
\]
and 
\[
P\varphi(x) ~=~ \E_x(\varphi(Z(1))) ~=~ \sum_{y\in E} p(x,y) \varphi(y), \quad x\in E.
\]
We denote by $\1$ the identity constant function on $E$ :  $\1(x) = 1$ for all $x\in E$. For a given subset $A\subset E$ we let $\1_A(x)= 1$ if $x\in A$, and $\1_A(x) = 0$ otherwise.  $\Id$ denotes the identity operator : $\Id\varphi = \varphi$ for any function $\varphi:E \to \R$. 
For a given $u\in E$, we define two operators $\varphi \to T_u\varphi$ and $\varphi \to A_u\varphi$ on the set of non-negative functions $\{\varphi : E \to \R_+\}$, by letting 
\[
T_{u}\varphi (x) ~=~ \varphi(x + u), \quad x\in E, 
\]
and 
\be\label{main_result_e6} 
A_{u} \varphi(x) ~=~  \sum_{y\in E} a_{u}(x,y)\varphi(y), \quad x\in E, 
\ee
with  
\[
a_{u}(x,y) = \begin{cases}p(x+ u, y) - p(x,y - u), &\text{if $y \in E + u$,} \\
p(x+ u, y) &\text{otherwise.} 
\end{cases} 
\]
Remark that because of the assumption (A3'),  $a_{u}(x,y) \geq 0$ for all $x,y\in E$, and hence, for  any non-negative function $\varphi : E \to \R_+$, the function $A_{u}\varphi : E \to \R_+\cup\{+\infty\}$ is  well defined. 
The matrix $P_H = \left(p_H(x,y), \, x,y\in E\right)$ is defined by 
\be\label{main_result_e7}
p_H(x,y)  =  GA_{x}\1_{\{y\}}(0) ~=~ \sum_{z\in E} G(0,z)a_x(z,y), \quad x,y\in E,  
\ee
To introduce the ladder height process we use  the following results of the paper \cite{Ignatiouk-ladder_heights} (see Lemma~3.1 of \cite{Ignatiouk-ladder_heights})~: 

\begin{lemma}\label{lemma1-1}  Under the hypotheses (A1) and  (A3'), the matrix $P_H$ is sub-stochastic.
\end{lemma}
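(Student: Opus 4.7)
The plan is to bound the row sums of $P_H$ by a Tonelli swap followed by a telescoping cancellation. Throughout, write $\psi(w):=p(w,\vartheta)=1-P\1(w)\geq 0$ for the one-step killing probability from $w\in E$.

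First I would compute $A_x\1$. Splitting the sum $\sum_{y\in E}a_x(z,y)$ according to whether $y\in E+x$ or $y\in E\setminus(E+x)$, and performing the change of variable $w=y-x$ in the first piece (valid because $E+x\subset E$ by (A3')), the two pieces recombine into
\[
A_x\1(z) \;=\; \sum_{y\in E}p(z+x,y) \;-\; \sum_{w\in E}p(z,w) \;=\; P\1(z+x)-P\1(z) \;=\; \psi(z)-\psi(z+x).
\]
Assumption (A3'), applied via $\sum_{y\in E} p(z+x,y)\geq \sum_{w\in E} p(z+x,w+x)\geq \sum_{w\in E} p(z,w)$, gives $\psi(z+x)\leq \psi(z)$, which simultaneously reconfirms $a_x\geq 0$ and shows $A_x\1(z)\in[0,1]$.

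Since all summands are non-negative, Tonelli then permits swapping summation order in the definition \eqref{main_result_e7} of $p_H$:
\[
\sum_{y\in E} p_H(x,y) \;=\; \sum_{z\in E} G(0,z)\,A_x\1(z) \;=\; \sum_{z\in E} G(0,z)\bigl[\psi(z)-\psi(z+x)\bigr] \;\leq\; \sum_{z\in E} G(0,z)\,\psi(z).
\]
Expanding $G(0,z)=\sum_{n\geq 0}\P_0(Z(n)=z)$ and swapping sums once more identifies the right-hand side with
\[
\sum_{n=0}^{\infty}\P_0\bigl(Z(n)\in E,\ Z(n+1)=\vartheta\bigr) \;=\; \P_0(\tau_\vartheta<\infty) \;\leq\; 1,
\]
which is precisely the sub-stochasticity of $P_H$.

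There is essentially no real obstacle beyond the bookkeeping in the first step, but I would warn against the slick-looking rewrite $A_x=T_xP-PT_x$ followed by $GA_x\1=GT_xP\1-GPT_x\1$: each piece on the right may well be infinite (irreducibility in (A1) only ensures $G(z,w)<\infty$, never $G\1(0)<\infty$), so subtraction is not legitimate at that level. The $\psi$-formulation above sidesteps this entirely because every intermediate quantity lies in $[0,1]$.
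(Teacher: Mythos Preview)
Your argument is correct. The computation $A_x\1(z)=\psi(z)-\psi(z+x)$ is the key identity, and once it is in hand the Tonelli swap and the identification $\sum_z G(0,z)\psi(z)=\P_0(\tau<\infty)\leq 1$ finish the job cleanly. The monotonicity $\psi(z+x)\leq\psi(z)$ is exactly the content of (A3') needed to drop the subtracted term.

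Note, however, that the present paper does \emph{not} actually prove this lemma: it is quoted verbatim from Lemma~3.1 of \cite{Ignatiouk-ladder_heights}, so there is no in-paper argument to compare yours against. Your proof is a perfectly good self-contained substitute; the approach in the cited reference proceeds along the same lines (computing $A_x\1$ and recognising the total absorption probability), so you have essentially reconstructed it. Your closing caveat about the illegitimate splitting $GA_x\1=GT_xP\1-GPT_x\1$ is well taken and worth keeping.
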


\noindent 
As well as for the sub-stochastic transition matrix of the process $(Z(t))$ we introduce an additional  state $\vartheta$ by letting 
\[
p_H(x,\vartheta) = 1 - \sum_{y\in E} p_H(x,y) \quad \text{and} \quad p_H(\vartheta,\vartheta) = 1. 
\]
Without any restriction of generality one can assume that this additional state $\vartheta$ is the same as for the killed random walk $(Z(t))$. 
\begin{defi}
A ladder heights process $(H(n))$  relative to $(Z(t))$  is defined as a Markov chain on $E\cup\{\vartheta\}$ with transition probabilities $p_H(x,y)$, $x,y\in E\cup\{\vartheta\}$. 
\end{defi} 
\noindent 
In a particular case, when $p(x+u,y+u) ~=~ p(x,y)$ for all  $x,y,u\in E$, i.e. if $(Z(t))$ is a copie of a homogeneous random walk on $\Z^d$ killed up on the first exit from the set $E\subset \Z^d$, there is another equivalent definition of the ladder height process $(H(n))$ : for a sequence of random times $(t_n)$ defined  by 
\be\label{eq1-1001} 
t_0 = 0, \quad \text{and} \quad  t_{k+1} = \begin{cases}\inf\{ n > t_k : Z(n) \not\in E + Z(t_k)\}, &\text{if $t_k <+\infty$},\\ +\infty &\text{otherwise,}
\end{cases}
\ee
in distribution 
\be\label{e1-10}
H(k) = \begin{cases} X(t_k) &\text{if $t_k < \infty$}\\
\vartheta, &\text{otherwise}.
\end{cases} 
\ee 
(see Proposition~3.2 of the paper \cite{Ignatiouk-ladder_heights} for more details). 

\noindent 

For $Z(0) = H(0) \in E$ we consider two stopping times $\tau$ and ${\cal T}$ defined as follows :  $\tau = \inf\{t\geq 1 :~Z(t)=\vartheta\}$ is the time of the first exit of the Markov chain $(Z(t))$ from the set $E$, and  ${\mathcal T} = \inf\{n > 0 ~:~  H(n) = \vartheta\}$ is first time when the process $(H(n))$ exits from $E$.
\begin{defi} 
The renewal function $V: E\cup\{\vartheta\} \to \R_+\cup\{+\infty\}$ is then defined by 
\[
V(x) = \begin{cases}\E_x({\mathcal T}) &\text{if $ x\in E$}, \\
0  &\text{otherwise},
\end{cases} 
\] 
where $\E_x(\cdot)$ denotes the  expectation with respect to the probability measure $\P_x$ on the space of trajectories of the processes corresponding to the initial state $H(0) = Z(0) = x$. 
\end{defi} 

Recall that for a  Markov chain $(Z(t))$, a  non-zero positive function $h: E\to \R_+$  is called  super harmonic if 
$P h(x) ~\leq~ h(x)$ for all $x\in E$. A  function $g : E \to \R_+$  is called potential for $(Z(t))$ if for any $x\in E$, $g(x) = G\varphi(x)$ with some non-negative function $\varphi : E \to \R_+$. Such a  function $\varphi : E \to \R_+$ is then uniquely determined by the following relation 
\[
\varphi =  g - P g.
\]
Any potential function is super harmonic, and by the Riesz decomposition theorem, any super harmonic function $f$ is equal to a sum of a harmonic function $h = \lim_n P^n f$ and a potential  function $g = G\varphi$ with $\varphi = (\Id-P) f$, see for instance Woess~\cite{Woess}. 

Remark that because of the assumption (A1), the function  $x\to \E_x(\tau)$ is either finite everywhere on $E$, or infinite also everywhere on $E$.

Theorem~2 of \cite{Ignatiouk-ladder_heights}  proves the following statement :

\begin{theorem}\label{th2} Under the hypotheses (A1) and (A3'),   
\begin{enumerate} 
\item[(i)] The function $V$ is finite with $V(0)=1$.
\item[(ii)] If $\E_\cdot(\tau) = +\infty$, the function $V$ is  harmonic for the Markov chain $(Z(t))$ and for  any $x\in E$,
\[
\lim_{n\to\infty} {\P_x(\tau > n)}/{\P_0(\tau > n)} ~=~ V(x).  
\]
\item[(iii)] If $\E_\cdot(\tau) < +\infty$,  the function $V$ is  potential for the Markov chain $(Z(t))$ and  for  any $x\in E$,  
\[
V(x) ~=~ {\E_x(\tau)}/{\E_e(\tau)}  ~\leq~ \liminf_{n\to\infty} ~{\P_x(\tau > n)}/{\P_e(\tau > n)}.  
\]
\end{enumerate} 
\end{theorem}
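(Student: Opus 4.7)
The plan is to exploit the renewal structure tying the ladder height process $(H(n))$ to the killed walk $(Z(t))$. The key tools are the one-step renewal equation $V(x) = 1 + P_H V(x)$, obtained from the Markov property of $H$ at the first step, the definitional identity $P_H \varphi(x) = GA_x\varphi(0)$, and the Green function identity $G(\Id - P) = \Id$ on $E$. With these in hand, the three parts follow from Riesz-type decomposition combined with renewal analysis of the Markov chain $(H(n))$.

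For part (i), the equality $V(0) = 1$ is immediate: when $H(0) = 0$ the first ladder time $t_1$ coincides with the exit time $\tau$ of $(Z(t))$ from $E = E + 0$, so $H(1) = \vartheta$ almost surely and $\mathcal{T} = 1$. For finiteness of $V$ on the rest of $E$, iterate the renewal equation to get $V = \sum_{n=0}^\infty P_H^n \1$ and control the tail via Lemma~\ref{lemma1-1} together with (A1); a clean route is to couple each step of $(H(n))$ with a true excursion of $(Z(t))$, so that $P_H^n\1(x)$ is bounded by the probability that $(Z(t))$ has not yet been killed after $n$ ladder cycles, which vanishes by transience.

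For part (ii), assume $\E_\cdot(\tau) = +\infty$. Harmonicity $PV = V$ is obtained by a direct computation combining the definitional identity for $P_H$ with $G(\Id - P) = \Id$: one finds that $V - PV$ equals a boundary term that vanishes precisely when the exit time $\tau$ is non-integrable. The asymptotic ratio $\P_x(\tau > n)/\P_0(\tau > n) \to V(x)$ is then obtained by decomposing the event $\{\tau > n\}$ according to the last ladder time before $n$ and applying the ratio limit theorem (Theorem~\ref{ratio_limit_theorem}) to the residual lifetime after that last renewal; non-integrability of $\tau$ is precisely what forces the limit to be harmonic rather than potential.

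For part (iii), $\E_\cdot(\tau) < +\infty$ forces $V$ to be a finite potential: setting $\psi = V - PV \geq 0$, Riesz decomposition writes $V = G\psi$, with the harmonic part vanishing because a non-zero positive harmonic function would contradict integrability of $\tau$. Wald's identity applied to the ladder chain $H$ stopped at $\mathcal{T}$, exploiting the decomposition $\tau = \sum_{k < \mathcal{T}}(t_{k+1} - t_k)$, yields $\E_x(\tau) = V(x)\,\E_e(\tau)$ for an appropriate reference state $e$, and the inequality $V(x) \leq \liminf_n \P_x(\tau > n)/\P_e(\tau > n)$ follows from Fatou applied to the decomposition used in (ii). The main obstacle throughout is the non-homogeneity of $P$: the kernel $P_H$ is only indirectly accessible through $G$, and the Wiener--Hopf type factorizations that would be routine in the translation-invariant setting must be reconstructed from scratch, keeping careful track of what happens near the boundary of $E$ and handling the two integrability regimes uniformly.
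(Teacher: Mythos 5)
The paper does not prove Theorem~\ref{th2}: it is imported from \cite{Ignatiouk-ladder_heights} (``Theorem~2 of \cite{Ignatiouk-ladder_heights} proves the following statement''), so there is no in-paper proof to compare your argument against.

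Taken on its own, your sketch captures the right renewal picture, but it leans throughout on the pathwise ladder-time construction $H(k)=Z(t_k)$, which the paper explicitly restricts to the homogeneous case $p(x+u,y+u)=p(x,y)$ for all $x,y,u\in E$. Under (A3') alone one only has $p(x+u,y+u)\geq p(x,y)$, and then $(H(n))$ is defined purely abstractly through the kernel $p_H(x,y)=GA_x\1_{\{y\}}(0)$, not as an embedded subsequence of $(Z(t))$. This undermines the coupling you invoke for the finiteness of $V$ in (i), the decomposition $\tau=\sum_{k<\mathcal{T}}(t_{k+1}-t_k)$ underlying the Wald step in (iii), and the stopping-time argument for $V(0)=1$; for the last of these the correct route under (A3') is simply to note that $a_0\equiv 0$, hence $p_H(0,y)=0$ for all $y\in E$ and $p_H(0,\vartheta)=1$, so $\mathcal{T}=1$ a.s.\ from $0$. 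In (ii) the appeal to Theorem~\ref{ratio_limit_theorem} is also misplaced: that theorem is a ratio-limit result for Green functions $G(z+u,y_n)/G(z,y_n)$ along space-like sequences $(y_n)$ and needs the extra hypotheses (A2) and (A4), whereas the claim here is a time-like ratio limit $\P_x(\tau>n)/\P_0(\tau>n)\to V(x)$ asserted under (A1) and (A3') only; it is a different renewal statement that cannot be read off from the Green-function comparison.
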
 
Using this theorem we obtain 

\begin{theorem}\label{main_result_th3} Suppose that a harmonic for $(Z(t))$ function $h: E\to \R_+$ satisfies the inequality 
\be
h(z+u) ~\geq~ h(x), \quad \forall z,u \in E. 
\ee
Then under the hypotheses (A1) and (A3'), the following assertions hold. 
\begin{enumerate}
\item If $\E_\cdot(\tau) = +\infty$, then the function $h$ is proportional to $V$ and for any $x\in E$, 
\[
h(x) ~=~ h(0) V(x) ~=~ h(0) \lim_{n\to\infty} {\P_x(\tau > n)}/{\P_0(\tau > n)}. 
\]
\item If $\E_\cdot(\tau) < +\infty$, then  $h\geq h(0)V$ and the function $h - h(0)V$ is non-trivial. 
\end{enumerate} 
\end{theorem}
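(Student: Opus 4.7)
The plan is to show that $h$ is superharmonic for the ladder-height kernel $P_H$ with $(\Id-P_H)h\ge h(0)\1$, apply the Riesz decomposition for $P_H$ together with the identification $V=G_H\1$ to deduce $h\ge h(0)V$, and finally distinguish cases~(1) and~(2) using whether $V$ is $(Z(t))$-harmonic or merely a potential.

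First I would establish the operator identity $A_u\varphi = T_uP\varphi - PT_u\varphi$ for every non-negative $\varphi$ by splitting the sum defining $A_u\varphi(x)$ according to whether $y\in E+u$ and substituting $y\mapsto y+u$ in the part containing $p(x,y-u)$. When $\varphi=h$ is harmonic this reduces to $A_uh=(\Id-P)T_uh$. A short computation using (A3') and $h\ge 0$ shows that $T_uh$ is itself $P$-superharmonic, so the Riesz decomposition gives
\[
T_uh \;=\; G(\Id-P)T_uh + h_u^*,\qquad h_u^*:=\lim_{n\to\infty} P^n T_uh,
\]
with $h_u^*$ non-negative and $(Z(t))$-harmonic. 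Evaluating at $0$ and using \eqref{main_result_e7} yields the key identity $P_Hh(u)=GA_uh(0)=h(u)-h_u^*(0)$. Because $T_uh\ge h$ gives $P^nT_uh\ge P^nh=h$ for every $n$, passing to the limit produces $h_u^*\ge h$, and in particular $h_u^*(0)\ge h(0)$; hence $(\Id-P_H)h(u)\ge h(0)$ for every $u\in E$, and $h$ is non-negative and superharmonic for $P_H$.

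The Riesz decomposition for $P_H$ then gives $h=G_H(\Id-P_H)h+h_\infty^H$ with $h_\infty^H\ge 0$ being $P_H$-harmonic; combined with the previous estimate and the identification $V=G_H\1$ (immediate from $V(x)=\E_x({\cal T})=\sum_{n\ge 0}\P_x(H(n)\in E)$), this yields the universal lower bound $h\ge h(0)V$, settling the inequality part of both assertions. Now set $g:=h-h(0)V\ge 0$, discarding the trivial case $h\equiv 0$ so that $h(0)>0$ by irreducibility (A1). In case~(1), Theorem~\ref{th2}(ii) says $V$ is $(Z(t))$-harmonic, so $g$ is a non-negative harmonic function with $g(0)=h(0)-h(0)V(0)=0$; expanding $0=g(0)=Pg(0)$ forces $p(0,y)g(y)=0$ for every $y$, and propagating this along paths (existing by (A1)) yields $g\equiv 0$, hence $h=h(0)V$. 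In case~(2), Theorem~\ref{th2}(iii) says $V$ is a non-zero potential, so $(\Id-P)V\not\equiv 0$; if $g\equiv 0$ then $h=h(0)V$ would be simultaneously harmonic and $h(0)$ times a non-harmonic function, which is impossible since $h(0)>0$, so $g$ must be non-trivial.

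The main obstacle is the first step: the operator identity $A_uh=(\Id-P)T_uh$ together with the superharmonicity of $T_uh$. The latter uses precisely the non-symmetric inequality in (A3') (comparing $p(x+u,v)$ with $p(x,v-u)$) and the non-negativity of $h$, and is where the generalization beyond the Markov-additive setting enters. Once these are in hand, the rest is a clean application of two Riesz decompositions (for $P$ and for $P_H$) combined with the elementary boundary argument at the origin.
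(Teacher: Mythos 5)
Your proposal is correct and follows essentially the same route as the paper's proof: establish that $T_uh$ is $P$-superharmonic, use the Riesz decomposition to identify $GA_uh(0)=P_Hh(u)$ and deduce $(\Id-P_H)h\ge h(0)\1$, then apply the Riesz decomposition for the ladder-height chain together with $V=G_H\1$ and Theorem~\ref{th2} to conclude. The only differences are presentational (phrasing the charge computation as the operator identity $A_uh=(\Id-P)T_uh$, and spelling out the minimum-principle propagation and the non-triviality argument in case (2), which the paper leaves implicit).
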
 
The proof of Theorem~\ref{main_result_th3} is given in Section~\ref{main_result_th3_proof}.

When combined with Corollary~\ref{ratio_limit_cor}, Theorem~\ref{main_result_th3} implies 

\begin{cor}\label{main_result_cor2}  Suppose that  a sequence  $(y_n)\in E^\N$  with $\lim_n\|y_n\| = +\infty$ satisfies  \eqref{main_result_e3} and converges in the Martin compactification to some point $\eta\in\partial_M E$. Suppose moreover that the limit function 
\[
K(z, \eta) ~=~ \lim_{n\to\infty} K(z, y_{n}), \quad z\in E
\]
is harmonic for $(Z(t))$. 
Then under the hypotheses (A1), (A2), (A3') and (A4'), 
\[
K(\cdot, \eta) = 
\tilde{h}_\eta + V ~\geq~ V
\]
where $\tilde{h}_\eta = 0$ if $\E_\cdot(\tau) = +\infty$, and $\tilde{h}_\eta\not= 0$ whenever $\E_\cdot(\tau) < +\infty$.
\end{cor}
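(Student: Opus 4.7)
The plan is to read the statement as a direct consequence of Corollary~\ref{ratio_limit_cor}, which supplies the required monotonicity of the limit function $K(\cdot,\eta)$, combined with Theorem~\ref{main_result_th3}, which classifies harmonic functions that satisfy this monotonicity. Before invoking either result I would pin down the normalization: since $K(0,y_n)=G(0,y_n)/G(0,y_n)=1$ for every $n$, passing to the limit gives $K(0,\eta)=1$, so the multiplicative constant $h(0)$ appearing in Theorem~\ref{main_result_th3} equals $1$ on the nose.

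Next I would extract the monotonicity. Hypothesis (A3') is exactly the statement that (A3) holds for every single $u\in E$, and likewise (A4') gives (A4) for every $u\in E$. Together with (A1), (A2), and the hypothesis \eqref{main_result_e3} on $(y_n)$, Corollary~\ref{ratio_limit_cor} applies with each such choice of $u$ and yields
\[
K(z+u,\eta)\ \ge\ K(z,\eta), \qquad \forall\, z,u\in E.
\]
This is precisely the hypothesis of Theorem~\ref{main_result_th3} applied to $h=K(\cdot,\eta)$, which is harmonic by assumption. In the case $\E_\cdot(\tau)=+\infty$, Theorem~\ref{main_result_th3}(1) gives $K(\cdot,\eta)=K(0,\eta)\,V=V$, so the choice $\tilde h_\eta\equiv 0$ delivers the desired decomposition $K(\cdot,\eta)=\tilde h_\eta+V\ge V$. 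In the case $\E_\cdot(\tau)<+\infty$, Theorem~\ref{main_result_th3}(2) gives $K(\cdot,\eta)\ge K(0,\eta)\,V=V$ together with the non-triviality of $K(\cdot,\eta)-V$, so setting $\tilde h_\eta:=K(\cdot,\eta)-V$ produces a non-zero function that completes the decomposition.

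Since the substantive work has already been done in the cited results, I do not expect a real obstacle here; the corollary is a bookkeeping combination. The only points that require care are the verification that (A3') and (A4') indeed supply the hypotheses (A3) and (A4) of Theorem~\ref{ratio_limit_theorem} simultaneously for every $u\in E$, so that the monotonicity inequality is obtained in all directions at once rather than just for a single $u$, and the identification of the normalizing constant via $K(0,\eta)=1$, which is what justifies writing $V$ (rather than some multiple of $V$) in the conclusion.
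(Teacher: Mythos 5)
Your proposal is correct and follows exactly the route the paper intends: the paper derives this corollary by combining Corollary~\ref{ratio_limit_cor} (which gives $K(z+u,\eta)\geq K(z,\eta)$ for all $z,u\in E$ under (A1), (A2), (A3'), (A4') and \eqref{main_result_e3}) with Theorem~\ref{main_result_th3} applied to $h=K(\cdot,\eta)$, normalized by $K(0,\eta)=1$. Your additional care about the normalization and about (A3')/(A4') supplying the hypotheses in every direction $u$ is exactly the right bookkeeping.
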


\bigskip

Now we apply the above results  to investigate the limit functions $K(\cdot,\eta)$ for a homogeneous random walk in $\Z^d$ killed upon the first exit from a convex cone. 
Consider  a probability measure $\mu$ on the lattice $\Z^d$ and let $(X(t))$ be a homogeneous random walk on  $\Z^d$ with transition probabilities 
\[
\P_x(X(1)=y)  = \mu(y-x), \quad x,y \in \Z^d.
\]
Denote by $\tau$ the first time when the random walk  $(X(t))$ exits from the cone ${\cal C}$~: 
\[
\tau ~=~ \inf\{t\geq 0~: X(t)\not\in{\cal C}\},
\]
and let  $(Z(t))$ be a copie of the random walk $(X(t))$ killed upon the time $\tau$. $(Z(t))$  is then a sub-stochastic random walk on ${E} = {\cal C}\cap\Z^d$ with transition probabilities $p(x,y)=\mu(y-x)$, $x,y\in{E}$. We introduce for $(Z(t))$ an additional absorbing state $\vartheta$ by letting 
\[
p(x,\vartheta) = 1 - \sum_{y\in E} p(x,y),
\]
so that $\tau ~=~ \inf\{t\geq 0~: Z(t) = \vartheta\}$.  As above, $\P_z$ denotes the probability measure on the space of trajectories of $(X(n))$ and $(Z(n))$ corresponding to the initial state $Z(0) = X(0) = x$, $\E_z$ denotes the expectation with respect to the measure $\P_z$, and  $G(x,y)$ and $K(x,y)$ denote respectively the Green function  and the Martin kernel associated with the random walk $(Z(t))$. 

We will assume that 

\begin{enumerate} 
\item[(B0)] the cone ${\cal C}$ is the closure of an open convex cone ${\cal C}^\circ\subset \R^d$ having a vertex at $0$; 
\item[(B1)] the random walk $(Z(n))$ is transient  on $E~=~ {\cal C}\cap\Z^d$ and satisfies the following communication condition~:  there are $\kappa_0 >0$ and a finite set ${\cal E}_0 \subset \Z^d$ such that 
\begin{itemize}
\item[(a)] $\mu(x) > 0$ for all $x\in{\cal E}_0$; 
\item[(b)] for any $x\not= y$, $x,y\in E$ there exists a sequence $x_0, x_1, \ldots , x_n\in E$ with $x_0=x$, $x_n=y$ and $n\leq \kappa_0 |y-x|$ such that $x_j-x_{j-1}\in {\cal E}_0$ for all $j\in\{1,\ldots,n\}$;
\end{itemize} 
\item[(B2)] the step generating function 
\[
R(\alpha) = \sum_{x\in\Z^d} \exp( \alpha\cdot x  ) \mu(x), \alpha\in \R^d,
\]
is finite in a neighborhood of the set $D = \{\alpha\in \R^d~:~ R(\alpha) \leq 1\}$;
\item[(B3)]the mean step of the random walk $(X(n))$ is non-zero: 
\[
m ~=~ \sum_{x\in\Z^d} x \mu(x) \not= 0.
\]
\end{enumerate} 

Under the above assumptions,  the set $D$ is compact and convex, the gradient  $
\nabla R(a)$ exists everywhere on $\R^d$ and does not vanish on the boundary $\partial D \dot= \{\alpha :
R(\alpha) = 1\}$, and the mapping
\be\label{e1def}
 q (\alpha) = {\nabla R(\alpha)}/{|\nabla R(\alpha)|}
\ee
determines a homeomorphism between  $\partial D$ and the unit sphere ${\cal S}^d$ in $\R^d$, (see~\cite{Hennequin}). We denote by $q\to \alpha(q)$ the inverse mapping to $q(\cdot): \partial D\to S$. Then for any $q\in S$, the point $\alpha(q)\in\partial D$ is  the unique point of the set $D$ that achieves the maximum of the function $\alpha\cdot q  $ over $\alpha\in D$. 
We let   
\[
S_+ = S\cap{\cal C} \quad \text{and} \quad  \partial_+ D ~=~\{\alpha\in\partial D~:~ q(\alpha)\in{\cal C}\}.
\]
The mappings  $q\to\alpha(q)$ determines then  a homeomorphism  from $S_+$ to $\partial_+D$. 

For a given $\alpha\in\partial _+D$  we consider a twisted homogeneous random walk $(X_\alpha (t))$ on $\Z^d$, with transition probabilities 
\[
p_\alpha(x,y) = \exp(\alpha \cdot(y-x)  ) \mu(y-x), \quad x,y\in\Z^d,
\]
and a copie $(Z_\alpha (t))$ of $(X_\alpha (t))$ killed upon the time $
\tau_{(\alpha)} = \inf\{t \geq 0 : X_\alpha (t) \not\in{E}\}$. 

Remark that under the hypotheses (B0)-(B3), the killed twisted random walk $(Z_\alpha (t))$ satisfies the hypotheses (A1), (A2), (A3') and (A4'), and hence, the ladder height process $(H_\alpha (n))$  related to the killed twisted random walk $(Z_\alpha (t))$ and the corresponding renewal function  $V_\alpha$ are well defined~: 
\[
V_\alpha(x) ~=~ \E_x\left({\cal T}_\alpha\right), \quad x\in E,
\]
where ${\cal T}_\alpha = \inf\{k > 0~: H_\alpha (k) = \vartheta\}$.  Moreover, in this case, for a sequence of stopping times $(t_n^{\alpha})$ defined  by 
\[
t_0^{\alpha} = 0, \quad \text{and} \quad  t_{k+1}^{\alpha} = \begin{cases} \inf\left\{ t > t_k^{\alpha} : Z_\alpha (t) \not\in E + Z_\alpha (t_k^\alpha)\right\} &\text{ if $t_k^\alpha <\infty$,}\\ 
+\infty &\text{otherwise} 
\end{cases} 
\]
 in distribution, 
\be\label{eq1-60}
H_\alpha (k) = \begin{cases} Z_\alpha (t_k^{\alpha}) &\text{if $t_k^{\alpha} < \infty$}\\
\vartheta, &\text{otherwise}, 
\end{cases} 
\ee 
for all $k\in\N$. Recall moreover that because of Assumption (B1), for any $\alpha\in\partial D$, the twisted killed random walk $(X_+^{(\alpha)}(n))$ is irreducible in ${E}$, and consequently, the function $x\to \E_x(\tau_{\alpha})$ is either finite everywhere in $E$ or infinite also everywhere in $E$. We let  
\[
\partial_+^\infty D ~=~ \{\alpha\in\partial_+D :~ \E_\cdot (\tau_\alpha ) = +\infty\} \quad \text{and} \quad S_+^\infty = \{q\in S_+ :~ \alpha(q)\in \partial_+^\infty D \}, 
\]
and for $q\in S_+^\infty$, we define the function $k_q: E\to \R_+$ by letting 
\[
k_q(x)= \exp(\alpha(q)\cdot x  ) V_{\alpha(q)}(x), \quad x\in{E}. 
\]
Our first result concerning the Martin boundary of the killed random walk is the following theorem. 
\begin{theorem}\label{theorem_4}  Under the hypotheses (B0)-(B3), for any $q\in S_+^\infty$, the following assertions hold~:
\begin{enumerate}
\item for any  $x,y\in{\cal C}\cap\Z^d$, 
\be\label{theorem4_e0} 
k_q(x+y) ~\geq~\exp(\langle\alpha(q),x\rangle) k_q(y), 
\ee
\item $k_q$ is a finite non-zero harmonic  function for $(Z(n))$,
\item  for any sequence of points $(y_n)\in({E})^\N$ with $\lim_n\|y_n\| = \infty$ and $\lim_n y_n/\|y_n\| = q$, 
\be\label{theorem_4_e1} 
\lim_n K(x,y_n) = k_q(x), \quad \forall x\in{E}. 
\ee
\end{enumerate} 
\end{theorem}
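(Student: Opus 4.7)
I would reduce everything to properties of the renewal function $V_{\alpha(q)}$ of the twisted killed walk $(Z_{\alpha(q)}(n))$ and then invoke Corollary~\ref{main_result_cor2} for this twisted walk. Since ${\cal C}$ is a closed convex cone with vertex at $0$, one has $E+x\subset E$ for every $x\in E$, and a pathwise coupling of the walks starting at $y$ and at $x+y$ yields $\P_{x+y}(\tau_{\alpha(q)}>n)\geq \P_y(\tau_{\alpha(q)}>n)$ for every $n$; combined with Theorem~\ref{th2}(ii), which applies because $\alpha(q)\in\partial_+^\infty D$, this gives $V_{\alpha(q)}(x+y)\geq V_{\alpha(q)}(y)$, and multiplying by $e^{\alpha(q)\cdot(x+y)}$ establishes (1). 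For (2), Theorem~\ref{th2}(i) gives finiteness and $V_{\alpha(q)}(0)=1$, so $k_q(0)=1\neq 0$; harmonicity of $V_{\alpha(q)}$ for $(Z_{\alpha(q)}(n))$ from Theorem~\ref{th2}(ii), multiplied by $e^{\alpha(q)\cdot x}$ and using $p_{\alpha(q)}(x,y)=e^{\alpha(q)\cdot(y-x)}p(x,y)$, rewrites as $Pk_q=k_q$.

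For (3), the exponential change of measure identity $G_{\alpha(q)}(x,y)=e^{\alpha(q)\cdot(y-x)}G(x,y)$ on $E$ yields
\[
K(x,y_n)=e^{\alpha(q)\cdot x}\,K_{\alpha(q)}(x,y_n),
\]
so it suffices to prove $K_{\alpha(q)}(x,y_n)\to V_{\alpha(q)}(x)$ for every $x\in E$. I would extract this from Corollary~\ref{main_result_cor2} applied to $(Z_{\alpha(q)}(n))$. The verification of (A1)--(A2) and (A3')--(A4') is routine from (B0)--(B3) and the homogeneity of the tilted walk, but the \emph{main obstacle} is the large-deviation lower bound
\[
\liminf_{n\to\infty}\frac{1}{\|y_n\|}\log G_{\alpha(q)}(0,y_n)\geq 0.
\]
Here I would exploit that the twisted mean step $\nabla R(\alpha(q))=|\nabla R(\alpha(q))|\,q$ points in the direction $q$ along which $y_n/\|y_n\|$ tends: constructing a trajectory from $0$ to $y_n$ of length of order $\|y_n\|$ that first enters the interior of ${\cal C}$ via a short path provided by (B1) and then follows a tube of radius of order $\sqrt{\|y_n\|}$ aimed at $y_n$, a local central limit estimate for the twisted walk combined with standard Gaussian exit-from-tube bounds delivers a polynomially decaying lower bound for $G_{\alpha(q)}(0,y_n)$, which is more than sufficient.

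Finally, by compactness of the twisted walk's Martin compactification I may pass to a subsequence along which $y_n\to\eta\in\partial_M E$. Because $y_n$ leaves every finite subset of $E$ in the fixed direction $q\in{\cal C}$, a standard Fatou plus Riesz decomposition argument shows that $K_{\alpha(q)}(\cdot,\eta)=\lim_k K_{\alpha(q)}(\cdot,y_{n_k})$ is harmonic for $(Z_{\alpha(q)}(n))$. Corollary~\ref{main_result_cor2} then forces $K_{\alpha(q)}(\cdot,\eta)=\tilde h_\eta+V_{\alpha(q)}$, and $\alpha(q)\in\partial_+^\infty D$ gives $\tilde h_\eta=0$, so every subsequential limit equals $V_{\alpha(q)}$. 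Hence the full sequence $K_{\alpha(q)}(\cdot,y_n)$ converges to $V_{\alpha(q)}$, and multiplication by $e^{\alpha(q)\cdot x}$ gives (3).
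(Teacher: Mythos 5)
Your route is essentially the paper's: tilt by $\alpha(q)$ so the drift points along $q$, verify the hypotheses of Corollary~\ref{main_result_cor2} for the tilted walk, use $\alpha(q)\in\partial_+^\infty D$ to kill the potential part, and deduce convergence of the whole sequence from the independence of the subsequential limit; parts (1) and (2) are fine. The serious problem is your justification of the harmonicity of $K_{\alpha(q)}(\cdot,\eta)$, which is exactly the hypothesis of Corollary~\ref{main_result_cor2} that must be checked and cannot be waved away. Fatou applied to $K_{\alpha(q)}(x,y_{n_k})=\E_x\bigl(K_{\alpha(q)}(Z_{\alpha(q)}(1),y_{n_k})\bigr)$ yields only \emph{super}harmonicity of the limit, and the Riesz decomposition then merely restates the problem (one must still show the potential part vanishes); limits of Martin kernels along sequences escaping to infinity are in general strictly superharmonic. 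The paper closes this by dominated convergence: Lemma~\ref{LD_lem} (a version of the large-deviation lower bound made \emph{uniform in the direction} $q$) gives $K(x,y_n)\leq C\exp\bigl(\alpha(x)\cdot x+\delta\|x\|\bigr)$ for all $n$, and Lemma~\ref{dominated_function} shows, using (B2) through a compactness argument on $\partial D$, that this dominating function is $\P_x$-integrable. Some argument of this type is indispensable; ``standard Fatou plus Riesz'' does not deliver it.

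A second, more repairable, weak point is your tube construction for $\liminf_n\|y_n\|^{-1}\log G_{\alpha(q)}(0,y_n)\geq 0$. It works when $q$ lies in the interior of ${\cal C}$, but $q\in S_+={\cal C}\cap S$ may lie on $\partial{\cal C}$ and the points $y_n$ may sit within $O(1)$ of the boundary, in which case a tube of radius of order $\sqrt{\|y_n\|}$ aimed at $y_n$ leaves the cone near its endpoint. One can patch this (aim the tube at an interior point $y_n-\eps\|y_n\|w$ and finish with a deterministic path of length $O(\eps\|y_n\|)$ supplied by the communication condition (B1), then let $\eps\to0$), but as written the step would fail for boundary directions. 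The paper avoids the issue entirely by importing the exact logarithmic asymptotics of $G(0,y_n)$ from Proposition~9.1 of \cite{Ignatiouk-ladder_heights}, quoted here as Proposition~\ref{LD_estimates_pr}, and in fact needs the uniform refinement of Lemma~\ref{LD_lem} for the domination step above, so the two gaps are linked.
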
 
The proof of this theorem is given in Section~\ref{theorem_4_proof}. 

Now we consider a particular case, when the boundary of the cone ${\cal C}$ is $C^1$. To investigate this case we need the following statement.

\begin{theorem}\label{main_result_example_th1} Under the hypotheses (B0)-(B3), if the relative boundary $\partial S_+$ of the set $S_+ = {\cal C}\cap S$ is $C^1$ and $m\in {\cal C}$, then $\E_x(\tau) = \infty$ for all $x\in E$. 
\end{theorem}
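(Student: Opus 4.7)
My proposed proof splits according to whether $m$ lies in the relative interior or on the relative boundary of $\mathcal{C}$. By the communication hypothesis~(B1), the function $x\mapsto \E_x(\tau)$ is either identically $+\infty$ or identically finite on $E$, so it suffices to exhibit a single $x^\star\in E$ with $\E_{x^\star}(\tau)=+\infty$. If $m\in\mathcal{C}^\circ$, I pick $\varepsilon>0$ with $B(m,2\varepsilon)\subset\mathcal{C}^\circ$ and choose $x^\star=M_0 e$ for some $e\in\mathcal{C}^\circ\cap\Z^d$ with $M_0$ large; the strong law of large numbers together with the exponential moments~(B2) then imply that the event $\{\|X(n)-x^\star-nm\|\leq \varepsilon n\text{ for all } n\geq 1\}$ has positive probability, and on this event $X(n)$ remains in $\mathcal{C}$ forever, so $\P_{x^\star}(\tau=+\infty)>0$ and $\E_{x^\star}(\tau)=+\infty$.

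Assume now $m\in\partial\mathcal{C}\setminus\{0\}$. The $C^1$-smoothness of $\partial S_+$ at $m/\|m\|$ yields a unique inward unit normal $\gamma\in\R^d$ to $\partial\mathcal{C}$ at $m$ with $\gamma\cdot m=0$, and by convexity of $\mathcal{C}$ one has $\mathcal{C}\subset H_\gamma:=\{x:\gamma\cdot x\geq 0\}$. Moreover $C^1$ smoothness supplies a modulus $\omega:[0,\delta]\to\R_+$ with $\omega(r)/r\to 0$ as $r\to 0$ such that any point $x=U\,m/\|m\|+Y\gamma+W$ with $W\in\{m,\gamma\}^\perp$, $U>0$, $\|W\|/U\leq\delta$ and $Y\geq U\,\omega(\|W\|/U)$ lies in $\mathcal{C}$. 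Decomposing $X(k)=U(k)\,m/\|m\|+Y(k)\,\gamma+W(k)$, the projections $(Y(k))$ and $(W(k))$ are centered random walks with finite variance (since $\gamma\cdot m=0$) while $(U(k))$ has drift $\|m\|$. I pick a starting point $x^\star\in E$ with $\gamma\cdot x^\star$ large (which is allowed by~(B1)) and, for a large constant $C>0$, introduce the typical-behavior event
\[
 \mathcal{B}_n=\Bigl\{\tfrac{\|m\|}{2}k\leq U(k)\leq 2\|m\|k,\ \|W(k)\|\leq C\sqrt{n}\text{ for all } 1\leq k\leq n\Bigr\}.
\]
The law of large numbers, Kolmogorov's maximal inequality and (B2) give $\P_{x^\star}(\mathcal{B}_n)$ bounded below uniformly in $n$ for $C$ large. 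On $\mathcal{B}_n$, the boundary correction $U(k)\,\omega(\|W(k)\|/U(k))$ is dominated by a deterministic quantity $\phi^{(n)}_k$ with $\max_{k\leq n}\phi^{(n)}_k/\sqrt n\to 0$, and the $C^1$ sufficient condition delivers $\mathcal{B}_n\cap\{Y(k)\geq \phi^{(n)}_k\ \forall k\leq n\}\subset\{\tau>n\}$. A Sparre-Andersen-type lower bound for the centered one-dimensional walk $(Y(k))$ starting from $\gamma\cdot x^\star$ and staying above a sublinear-in-$\sqrt n$ barrier then yields $\P_{x^\star}(\tau>n)\geq c/\sqrt{n}$; summing over $n$ gives $\E_{x^\star}(\tau)=+\infty$, concluding by irreducibility.

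The main technical obstacle will be the final one-dimensional survival estimate, since the barrier $\phi^{(n)}_k$ is constructed from the perpendicular walk $(W(k))$, which is correlated with $Y$ through the common increment sequence. I plan to handle this by conditioning on the $\sigma$-algebra generated by $(W(k))_{k\leq n}$—making the barrier deterministic—and then applying a uniform-in-barrier $1/\sqrt{n}$ survival estimate for centered one-dimensional random walks above $o(\sqrt n)$ barriers; the exponential moments in~(B2) ensure that the conditional law of $Y$ still satisfies the hypotheses of the classical one-dimensional arguments.
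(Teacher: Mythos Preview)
Your overall plan for the case $m\in\partial\mathcal{C}$ has a genuine gap at the last step. After conditioning on the $\sigma$-algebra generated by $(W(k))_{k\le n}$, the increments of $Y$ are independent across $k$ but they are \emph{no longer centered}: the conditional mean $\mu(w):=\E[\Delta Y_1\mid \Delta W_1=w]$ need not vanish, and the accumulated conditional drift $\sum_{j\le k}\mu(\Delta W_j)$ is itself a mean-zero random walk of order $\sqrt{k}$, not $o(\sqrt{n})$. Thus the barrier the recentred martingale must clear is of order $\sqrt{n}$, and the uniform $1/\sqrt{n}$ survival estimate you invoke for $o(\sqrt{n})$ barriers does not apply. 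Zero covariance between $\Delta Y$ and $\Delta W$ (even if you arrange it) does not force $\mu\equiv 0$, and (B2) says nothing about this. Equivalently, you cannot lower-bound $\P_{x^\star}(\mathcal{B}_n\cap\{Y(k)\ge\phi^{(n)}_k\ \forall k\le n\})$ by the product of the two probabilities, and no FKG-type monotonicity is available here.

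The paper avoids precisely this difficulty by not separating the normal direction from the remaining $(d-2)$ orthogonal directions. It first linearly transforms so that the steps have identity covariance, then projects onto the full hyperplane $\hat\Pi=\{x:x\cdot\hat m=0\}$, obtaining a centred $(d-1)$-dimensional walk $\hat S(t)$. It takes a circular cone $\mathcal{K}(\theta_2,\hat v)\subset\hat\Pi$ around the inward normal $\hat v$ with Denisov--Wachtel exponent $p^\star=2$, so that the associated harmonic function $\mathcal{V}$ satisfies $\mathcal{V}(x)\le C(1+\|x\|^2)$ and $\mathcal{V}(\hat S(t\wedge\tau_{\mathcal{K}}))$ is a martingale. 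A uniform-integrability argument (Doob's $L^2$ inequality plus the identity $\E_x\|\hat S(t\wedge\sigma)\|^2=\|x\|^2+(d-1)\E_x(t\wedge\sigma)$) shows that if $\E_x(\tau_\eps\wedge\tau_{\mathcal{K}})<\infty$ then $\E_x[\mathcal{V}(\hat X(\tau_\eps\wedge\tau_{\mathcal{K}}))]=\mathcal{V}(x)$, where $\tau_\eps$ is the exit time from a narrow circular cone $\mathcal{C}_\eps$ about $\hat m$. Large-deviation bounds make the left-hand side tend to $0$ along a sequence $x_n$ going to infinity in the $\hat m$-direction with bounded projection, while $\mathcal{V}(x_n)\ge\mathcal{V}(x_0)>0$; this contradiction forces $\E_{x_n}(\tau_\eps\wedge\tau_{\mathcal{K}})=\infty$. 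Finally the $C^1$ hypothesis gives $\mathcal{C}_\eps\cap\{x:Pr(x)\in\mathcal{K}(\theta_2,\hat v)\}\subset\hat{\mathcal{C}}$ for small $\eps$, whence $\hat\tau\ge\tau_\eps\wedge\tau_{\mathcal{K}}$. The point is that the $(d-1)$-dimensional cone argument handles all orthogonal fluctuations simultaneously, so no conditioning between correlated components is ever needed.
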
 
The proof of this theorem is given in Section~\ref{th1_example_proof}. 

When combined with our previous results, this theorem provides the following statement.

\begin{theorem}\label{main_result_example_th2} Suppose that the relative boundary $\partial S_+$ of the set $S_+ = {\cal C}\cap S$ is $C^1$. Then under the hypotheses (B0)-(B3), 
\begin{enumerate} 
\item[i)] $S_+^\infty = S_+$.
\item[ii)]  For any $q\in S_+$, the  $k_q$ is a  finite  non-zero harmonic function for $(Z(t))$. 
\item[iii)] for any $q\in S_+$ and any sequence of points $(y_n)\in({E})^\N$ with $\lim_n\|y_n\| = \infty$ and $\lim_n y_n/\|y_n\| = q$,  \eqref{theorem_4_e1} holds. 
\end{enumerate} 
\end{theorem}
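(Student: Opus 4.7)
The plan is to deduce Theorem~\ref{main_result_example_th2} as a direct consequence of Theorem~\ref{main_result_example_th1} together with Theorem~\ref{theorem_4}. The real content sits in (i); once $S_+^\infty = S_+$ is established, parts (ii) and (iii) are already contained in Theorem~\ref{theorem_4}. The mechanism for (i) is an exponential change of measure that turns a general direction $q \in S_+$ into the mean-drift direction of a new (twisted) random walk, to which Theorem~\ref{main_result_example_th1} can be applied.

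For assertion (i), I fix an arbitrary $q \in S_+$ and apply Theorem~\ref{main_result_example_th1} to the twisted killed random walk $(Z_{\alpha(q)}(t))$ in place of $(Z(t))$. The cone ${\cal C}$ is unchanged by the twist, so the $C^1$ hypothesis on $\partial S_+$ still holds, and (B0) is automatic. The step measure $\mu_{\alpha(q)}(x) = \exp(\alpha(q) \cdot x)\mu(x)$ has the same support as $\mu$, so the communication condition in (B1) transfers verbatim; transience of $(Z_{\alpha(q)}(t))$ is part of the remark preceding Theorem~\ref{theorem_4}. For (B2), the sublevel set of the twisted step generating function is $D_{\alpha(q)} = D - \alpha(q)$, which is compact, and finiteness of $R$ in a neighborhood of $D$ translates to finiteness of $R_{\alpha(q)}$ near $D_{\alpha(q)}$. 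Most importantly, the mean step of the twisted walk is $\nabla R(\alpha(q)) = |\nabla R(\alpha(q))|\, q$, which is non-zero (providing (B3) for the twist) and, because ${\cal C}$ is a cone containing $q$, lies in ${\cal C}$, so the additional hypothesis $m \in {\cal C}$ of Theorem~\ref{main_result_example_th1} is satisfied for the twisted walk. Theorem~\ref{main_result_example_th1} then gives $\E_x(\tau_{\alpha(q)}) = +\infty$ for every $x \in E$, i.e., $\alpha(q) \in \partial_+^\infty D$, or equivalently $q \in S_+^\infty$. The reverse inclusion $S_+^\infty \subseteq S_+$ holds by definition, so $S_+^\infty = S_+$.

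Parts (ii) and (iii) are then immediate: for each $q \in S_+ = S_+^\infty$, Theorem~\ref{theorem_4} asserts that $k_q$ is a finite non-zero harmonic function for $(Z(t))$ and that $K(x, y_n) \to k_q(x)$ for every sequence $(y_n) \in E^\N$ with $\|y_n\| \to \infty$ and $y_n/\|y_n\| \to q$, which are precisely the conclusions of (ii) and (iii). The only step requiring attention in the present argument is the identification of the mean direction of the twisted walk with $q$ and the observation that this direction lies in the cone; the substantive difficulty has been offloaded to Theorem~\ref{main_result_example_th1}, so the present proof is a short packaging of the exponential twist with the two preceding theorems.
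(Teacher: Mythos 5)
Your proposal is correct and follows essentially the same route as the paper: apply Theorem~\ref{main_result_example_th1} to each twisted walk $(Z_{\alpha(q)}(t))$, whose mean step points in the direction $q\in S_+\subset{\cal C}$, to conclude $\E_x(\tau_{\alpha(q)})=+\infty$ and hence $S_+^\infty=S_+$, after which (ii) and (iii) are read off from Theorem~\ref{theorem_4}. Your verification of the hypotheses (B0)--(B3) and of the condition $m_{\alpha(q)}\in{\cal C}$ for the twisted walk is in fact slightly more explicit than the paper's one-paragraph argument.
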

The proof of this theorem is given in Section~\ref{th2_example_proof}.

\section{Proof of Theorem~\ref{ratio_limit_theorem}}\label{ratio_limit_th_proof}  

Remark that for any $z,z',y\in E$, 
\[
G(z,y) ~\geq~ \P_z(Z(t)=z' \; \text{for some $t\geq 0$}) G(z',y) 
\]
where because of Assumption (A1), 
\[
\P_z(Z(t)=z' \; \text{for some $t\geq 0$}) > 0.
\]
Hence, the inequality \eqref{main_result_e3} implies that
\be\label{main_result_e3'}
\liminf_{n\to\infty} \frac{1}{\|y_n\|}\log G(z,y_n) \geq 0, \quad \forall z\in E. 
\ee
Because of \eqref{main_result_e3'},  for any $z\in E$, the terms of the order $c\exp(-\delta \|y_n\|)$ give an
asymptotically negligible contribution to $G(z,y_n)$. The following lemma provides the first negligible part of $G(z,y_n)$. 

\begin{lemma}\label{rl_lemma1} For any $0\leq \delta < \delta_0$ there are two constants $\kappa > 0 $ and $C > 0$ such that 
for any $z\in E$ and $n\in\N$, 
\be\label{ratio_limit_eq1p} 
 \sum_{0\leq t \leq \kappa \|y_n\|} \P_{z}\bigl(Z(t) = y_n\bigr) ~\leq~ C \exp\left(-  \frac{\delta}{2} \|y_n\| + \delta \|z\|\right)
\ee
\end{lemma}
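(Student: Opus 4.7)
The plan is a direct exponential Chebyshev (tilting) argument combined with the one-step moment bound of Assumption (A2), followed by a suitable choice of $\kappa$ to absorb the arithmetic growth against the targeted exponential decay.

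First, fix $\delta\in[0,\delta_0)$ and assume $y_n\neq 0$ (otherwise $\lfloor\kappa\|y_n\|\rfloor=0$ and the sum reduces to a single term bounded by $1$, which already satisfies the claim). I would choose the tilting vector $\alpha=\delta\, y_n/\|y_n\|$, so that $\|\alpha\|=\delta<\delta_0$ and $\alpha\cdot y_n=\delta\|y_n\|$, while by Cauchy--Schwarz $\alpha\cdot z\leq \delta\|z\|$. On the event $\{Z(t)=y_n\}$ we then have
\[
\exp\bigl(\alpha\cdot(Z(t)-z)\bigr)\;=\;\exp(\delta\|y_n\|-\alpha\cdot z)\;\geq\;\exp(\delta\|y_n\|-\delta\|z\|),
\]
which yields the exponential Chebyshev inequality
\[
\P_z(Z(t)=y_n)\;\leq\;e^{-\delta\|y_n\|+\delta\|z\|}\;\E_z\!\bigl[\exp(\alpha\cdot(Z(t)-z))\bigr].
\]

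Second, I would iterate (A2) along the Markov chain. Conditioning on $Z(t-1)=y\in E$ and using (A2), we get $\sum_{y'\in E}e^{\alpha\cdot(y'-y)}p(y,y')\leq C$; killed trajectories (reaching $\vartheta$) simply contribute $0$ to the moment-generating function computed on $E$. A straightforward induction gives
\[
\E_z\!\bigl[\exp(\alpha\cdot(Z(t)-z))\bigr]\;\leq\;C^t,
\]
and hence $\P_z(Z(t)=y_n)\leq e^{-\delta\|y_n\|+\delta\|z\|}C^t$ for every $t\geq 0$.

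Finally, I would sum over $t$. Assuming without loss of generality $C>1$ (if $C\leq 1$, the geometric sum is trivially bounded by $\kappa\|y_n\|+1$, and one absorbs this polynomial factor into a slightly smaller exponent),
\[
\sum_{t=0}^{\lfloor\kappa\|y_n\|\rfloor} C^t\;\leq\;\frac{C}{C-1}\,\exp(\kappa\log C\cdot\|y_n\|),
\]
and choosing $\kappa=\delta/(2\log C)$ turns $\kappa\log C\cdot\|y_n\|$ into exactly $(\delta/2)\|y_n\|$, consuming half of the decay factor and leaving the desired bound
\[
\frac{C}{C-1}\,\exp\!\Bigl(-\tfrac{\delta}{2}\|y_n\|+\delta\|z\|\Bigr).
\]
This is the claimed inequality with constant $C'=C/(C-1)$ and exponent coefficient $\kappa=\delta/(2\log C)$. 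There is no real obstacle here: the only subtlety is to notice that sub-stochasticity of $(Z(t))$ is harmless, since the indicator $\1_{\{Z(t)=y_n\}}$ with $y_n\in E$ automatically discards paths killed into $\vartheta$, so (A2) iterates verbatim.
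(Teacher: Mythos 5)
Your proof is correct and follows essentially the same route as the paper: both use the exponential Chebyshev bound with the tilting vector $\alpha = \delta y_n/\|y_n\|$, iterate assumption (A2) via the Markov property to control $\E_z[\exp(\alpha\cdot(Z(t)-z))]$ by $C^t$, and then choose $\kappa = \delta/(2\log C)$ so that the geometric sum costs only $\exp(\tfrac{\delta}{2}\|y_n\|)$. Your explicit handling of the case $C\leq 1$ is a minor tidying that the paper glosses over; otherwise the two arguments are the same.
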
 
\begin{proof} Because of the assumption (A2),  for any $0 < \delta \leq \delta_0$, 
\[
C_\delta ~\dot=~\sup_{\alpha\in\R^d~: \|\alpha\| \leq \delta} ~\sup_{x\in E} \E_x\Bigl(\exp\bigl(\alpha\cdot  (Z(1) -x) \bigr)\Bigr) < \infty.
\]
Using Chebychev's inequality and Markov property  from this it follows that 
for any $z\in E$ and  $\alpha\in\R^d$ with $\|\alpha\| \leq \delta \leq 
\delta_0 $, the following relation holds 
\begin{align*}
\P_{z}\left(\alpha\cdot  Z(t)   \geq  \delta \|y_n\|\right) &\leq~ \exp(-\delta \|y_n\|) 
~\E_{z}\left( \exp(\alpha\cdot  Z(t)  ) \right) \\ &\leq C_\delta^t   \exp(-\delta \|y_n\|  + \alpha\cdot z  ) \leq C_\delta^t \exp\left(- \delta \|y_n\| + \delta \|z\|\right), \quad \forall t\in\N.
\end{align*}
Using this inequality with $\alpha = \delta y_n/\|y_n\|$ we obtain   
\[
\P_{z}\bigl(Z(t) = y_n\bigr) ~\leq~  C_\delta^t \exp\left(- \delta \|y_n\| + \delta \|z\|\right)
\]
and consequently, for $\kappa = \delta/(2 \ln C_\delta)$, 
\begin{align*}
\sum_{0\leq t \leq \kappa \|y_n\|} \P_{z}\bigl(Z(t) = y_n\bigr) 
&~\leq~ \exp\bigl(-\delta \|y_n\| + \delta \|z\|\bigl) \sum_{0\leq t \leq \kappa  \|y_n\| }
C_\delta^t   \\
&~\leq~ \exp\left(-  \frac{\delta}{2} \|y_n\| + \delta \|z\|\right)/(C_\delta-1) , 
\end{align*}
for any $z\in E$ and $n\in\N$.  
\end{proof}
\medskip 

Next, we adapte the method of Bernoulli part decomposition and we begin our analysis with a particular case when 
\[
\eps ~\dot= \inf_{x\in E} \min\{p(x,x), \, p(x,x+u)\} ~>~0. 
\]
Consider a time-homogeneous discret time Markov chain $(W(t), \xi(t),\zeta(t))$ on the set of states $(E\cup\{\vartheta\})\times\{0,1\}\times\{0,1\}$   such that 
 \begin{enumerate}
\item[(i)] $(\xi(k), k \geq 0)$ and $(\zeta(k), k\geq 0)$ are mutually independent sequences of 
independent  Bernoulli random variables with means $\E(\xi(k))= 2\eps$ and $\E(\zeta(k))=1/2$.
\item[(ii)]   the state $\vartheta$ for the process $(W(t))$ is absorbing: if $W(t) = \vartheta$ for some $t\in\N$, then almost surely $W(t')=\vartheta$ for all $t'\geq t$. 
 \item[(iii)] if $\xi(t)=1$ and $W(t)\in E$ then 
 \[
 W(t+1) ~=~\begin{cases} W(t) &\text{whenever $\zeta(t)=0,$}\\
 W(t) + u &\text{whenever $\zeta(t)=1$,} 
 \end{cases} 
 \]
 \item[(iv)] if $\xi(t) = 0$ and $W(t)\in E$, then for any $x\in E$  and $\zeta\in\{0,1\}$, 
 \begin{align*} 
\P(W(t+1) = y\, \vert\, &W(t) = x, \xi(t) = 0, \; \zeta(t) =\zeta  \bigr) \\ &=~ \begin{cases}  
p(x,y)/(1-2\eps) &\text{if $y \not\in\{x, x + u\}$, $x\in E$,}\\
(p(x,y) - \eps)/(1-2\eps) &\text{if  $y \in\{x, x + u\}$, $x\in E$.} 
\end{cases} 
\end{align*} 
\end{enumerate} 
Then  in distribution 
\[
Z(t) = W(t), \quad \forall t\in\N,
\]
and consequently,
\[
G(z,y_n)  ~=~ \sum_{t \geq 0} \P_{z}\left(W(t) = y_n\right). 
\]
For $t \in\N$ we let 
\[
N_t = \sum_{k=0}^{t-1} \xi(k) \quad \text{and} \quad L_t = \sum_{k=0}^{t-1} \xi(k)\zeta(k),
\]
and for  given $0 < \sigma < 1/2$ and $z\in E$ we define 
\[
\Xi_\sigma(z,y_n) ~=~\sum_{t >
  \kappa \|y_n\|} \P_{z}\left(W(t) = y_n, \; \left|L_t -
  N_t/2\right| \leq \sigma N_t, \; N_t \geq \eps t/2 \; \right), 
\]
so that 
\begin{align*}
G(z,y_n) - \Xi_\sigma(z,y_n) &=~ \sum_{t \leq 
  \kappa \|y_n\|} \P_{z}\left(Z(t) = y_n\right) ~+~ \sum_{t >
  \kappa \|y_n\|} \P_{z}\left(W(t) = y_n, \; N_t < \eps t/2 \; \right) \\ 
  &~+ \sum_{t >
  \kappa \|y_n\|} \P_{z}\left(W(t) = y_n, \; N_t \geq \eps t/2, \; \left|L_t -
  N_t/2\right| > \sigma N_t \; \right) 
\end{align*} 
The following lemma proves that for any $\kappa > 0$ and $z\in E$, the part  
\[
\sum_{t >
  \kappa \|y_n\|} \P_{z}\left(W(t) = y_n, N_t < \eps t/2\right) + \sum_{t >
  \kappa \|y_n\|} \P_{z}\left(W(t) = y_n, N_t \geq \eps t/2, \; \left|L_t -
  N_t/2\right| > \sigma N_t \right) 
  \]
 of $G(z,y_n)$ is also negligible. 
\begin{lemma}\label{rl_lemma2}  Suppose that 
\[
\eps ~\dot= \inf_{x\in E} \min\{p(x,x), \, p(x,x+u)\} ~>~0.
\]
Then for any $0 < \sigma < 1/2$ and $\kappa > 0$, there are two constants $\theta > 0$ and $C > 0$ such that for any  $z\in E$, 
\begin{align}
 \sum_{t >
  \kappa \|y_n\|} &\P_{z}\left(W(t) = y_n, \; N_t < \eps t/2 \; \right) \nonumber\\ & + \sum_{t >
  \kappa \|y_n\|} \P_{z}\left(W(t) = y_n, \; N_t \geq \eps t/2, \; \left|L_t -
  N_t/2\right| > \sigma N_t \; \right) ~\leq~ C\exp(-\theta  \|y_n\|) \label{ratio_limit_eq3pp}
\end{align} 
\end{lemma}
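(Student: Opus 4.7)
The plan is to reduce both sums in \eqref{ratio_limit_eq3pp} to elementary Chernoff-type tail estimates for the Bernoulli sequences $(\xi(k))$ and $(\zeta(k))$, and then to sum the resulting $\exp(-ct)$ bounds over $t > \kappa\|y_n\|$. The crucial observation is that the events $\{N_t < \eps t/2\}$ and $\{N_t \geq \eps t/2,\ |L_t - N_t/2| > \sigma N_t\}$ depend only on the auxiliary sequences $(\xi(k))$ and $(\zeta(k))$, which are by construction independent of the initial state $z$. Dropping the constraint $W(t) = y_n$, for every $t$ one has
\[
\P_z\bigl(W(t) = y_n,\ N_t < \eps t/2\bigr) ~\leq~ \P\bigl(N_t < \eps t/2\bigr),
\]
and analogously for the second event, so uniformity in $z\in E$ is automatic. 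No information about the position of $W(t)$ is needed at all.

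For the first probability, $N_t$ is the sum of $t$ i.i.d.\ Bernoulli variables with mean $2\eps$, so a standard Chernoff bound produces a constant $c_1 = c_1(\eps) > 0$ with $\P(N_t < \eps t/2) \leq \exp(-c_1 t)$ for every $t$. For the second, I would condition on the $\sigma$-algebra generated by $(\xi(k))_{0\leq k < t}$: on $\{N_t = N\}$, the variable $L_t = \sum_{k=0}^{t-1}\xi(k)\zeta(k)$ is the sum of exactly $N$ i.i.d.\ Bernoulli$(1/2)$ random variables, by the mutual independence of the two sequences in the construction of $(W,\xi,\zeta)$. Hoeffding's inequality then gives
\[
\P\bigl(\bigl|L_t - N_t/2\bigr| > \sigma N_t \,\bigm|\, N_t = N\bigr) ~\leq~ 2\exp(-2\sigma^2 N),
\]
and restricting to $N \geq \eps t/2$ yields $\P\bigl(N_t \geq \eps t/2,\ |L_t - N_t/2| > \sigma N_t\bigr) \leq 2\exp(-\sigma^2\eps t)$.

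Setting $c = \min\{c_1,\sigma^2\eps\} > 0$, each $t$-term in \eqref{ratio_limit_eq3pp} is therefore bounded by $3\exp(-c t)$, uniformly in $z\in E$ and in $n\in\N$. Summing the resulting geometric series over $t > \kappa\|y_n\|$ then produces a bound of the form $C\exp(-\theta\|y_n\|)$ with $\theta = c\kappa$ and $C = 3/(1-\exp(-c))$, which is exactly the content of \eqref{ratio_limit_eq3pp}. The argument is essentially mechanical, with no serious obstacle; the one step that genuinely exploits the specific construction of the auxiliary chain is the identification of $L_t$, conditional on $(\xi(k))_{k<t}$, with a Binomial$(N_t,1/2)$ variable, for which the mutual independence of $(\xi(k))$ and $(\zeta(k))$ postulated in clause (i) of the definition of $(W,\xi,\zeta)$ is essential.
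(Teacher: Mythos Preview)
Your proof is correct and follows essentially the same route as the paper's own argument: drop the event $\{W(t)=y_n\}$, apply a Chernoff-type bound to $N_t$, condition on $N_t=N$ and apply a Chernoff/Hoeffding bound to the Binomial$(N,1/2)$ variable $L_t$, then sum the resulting geometric series over $t>\kappa\|y_n\|$. The only difference is cosmetic---you invoke the named inequalities, whereas the paper writes out the moment-generating-function optimisation explicitly---so there is nothing to add.
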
 
\begin{proof} Remark that $N_t$ is a Binomial random variable with mean $\eps n$ and variance $\eps(1-\eps)n$ and 
by Chebychev's inequality, 
\[
\P(N_t < \eps t/2) ~\leq~ \inf_{\eta < 0}  ~e^{- \eta \eps t/2} \E\left(e^{  \eta
  N_t}\right) ~=~ \exp\left( - t \theta_1\right)
\]
where 
\[
\theta_1 ~\dot=~ \sup_{\eta < 0} \bigl( \eta \eps /2 - \log (\eps e^{\eta} +
  1-\eps)\bigr)  ~>~ 0
  \] 
because the function $f_1(\eta) = \eta \eps /2 - \log (\eps e^{\eta} +
  1-\eps)$ is concave, $f'_1(0) = -\eps/2 < 0$ and $f_1(0)=0$.  From this it follows that 
\begin{align}
\sum_{t >
  \kappa \|y_n\|}  \P_{z}(W(t) = y_n, \; N_t < \eps t/2 ) 
&~\leq~ \sum_{t >
  \kappa \|y_n\|}  ~\P\left( N_t < \eps t/2\right) \nonumber \\ &~\leq~ \exp\left( - \kappa
  \theta_1 \|y_n\|\right)/(1-\exp( - \theta_1))  \label{ratio_limit_eq3p} 
\end{align}
Remark furthermore that the conditional distribution of the random variable $L_t$ 
given that $N_t=N$ is binomial with  mean $N/2$ and variance $N/4$. Hence, for $0 < \sigma < 1/2$, 
\begin{align}
\P\left(\left|L_t - {N_t}/{2 }\right| > \sigma N_t \, \Bigl\vert\, N_t=N\right) &=~ 2 \P\left( \sum_{s=1}^{N}
  \zeta(s) >  \frac{N}{2 } + \sigma N \right)\nonumber\\
&\leq~ 2 ~\inf_{\eta > 0}  ~e^{- \eta (1/2 + \sigma )N} \E\left(\exp\left( \eta \sum_{s=1}^{N}
  \zeta(s)\right)\right) \nonumber\\
&\leq 2 ~\exp\left(- N \theta_2\right)\label{ratio_limit_eq5p} 
\end{align}
where 
\[
\theta_2 ~\dot=~ \sup_{\eta > 0}  \left(\eta (1/2 + \sigma ) - \log \bigl((e^\eta +
1)/2\bigr)\right) ~>~ 0
\]
because the function $f_2(\eta) = \eta (1/2 + \sigma ) - \log \bigl((e^\eta +
1)/2\bigr)$ is concave, $f_2(0)=0$ and $f_2'(0) = \sigma > 0$. From \eqref{ratio_limit_eq5p} it follows that 
\[
\P\left(\left| L_t - {N_t}/{2 }\right| > \sigma N_t, \, N_t \geq \eps t/2 \right) ~\leq~ \E\left( \exp\left(- N_t \theta_2\right); \, N_t \geq \eps t/2  \right) ~\leq~ \exp\left(- \eps \theta_2 t/2\right)
\]
and consequently, 
\begin{align*}
\sum_{t >
  \kappa |z_n|} &\P_{z}\left(W(t) = z_n, \; \left|L_{t} - N_t/2\right| > \sigma N_t, \; N_t \geq \eps t/2 \; \right) \\ 
&\leq \sum_{t >
  \kappa |z_n|}  ~\P\bigl( \left| L_t -
  N_t/2\right| > \sigma N_t, \, N_t \geq \eps t/2 \bigr) ~\leq~ 2 \sum_{t >
  \kappa |z_n|} \exp\left(- \eps \theta_2 t/2\right) \\
&\leq ~\exp\left(-
  \eps \theta_2 \kappa |z_n|/2\right)/(1-\exp(-
  \eps \theta_2/2))
\end{align*}
When combined with \eqref{ratio_limit_eq3p}, the last relation completes the proof of 
\eqref{ratio_limit_eq3pp}.
\end{proof} 

Now we  compare the quantities $\Xi_\sigma(z,y_n)$ and $G(z+u,y_n)$. For this it is convenient to  introduce  two sequences of random sets 
\[
A_t = \{ k\in\{0,\ldots,t-1\}~: \xi(k) = 1\} \; \text{and} \; B_t = \{ k\in\{0,\ldots,t-1\}~: \xi(k) \zeta(k)  = 1\}, \quad t\in\N,
\]
so that 
\[
N_t ~=~\sum_{k=0}^{t-1} \xi(k) = Card(A_t) \quad \text{and} \quad L_t = \sum_{k=0}^{t-1} \xi(k) \zeta(k) = Card(B_t). 
\]
\begin{lemma}\label{ratio_limit_lem3} For any subsets $B\subset\ A\subset\{0,\ldots,t-1\}$, $k\in B$ and $z\in E$, 
\be\label{ratio_limit_eq9}
\P_z(W(t) = y_n,  A_t = A,  B_t = B) ~\leq~  \P_{z+u}(W(t) = y_n,  A_t = A,  B_t = B\setminus\{k\}) 
\ee
\end{lemma}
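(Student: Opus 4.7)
The plan is to prove~\eqref{ratio_limit_eq9} via a weight-preserving injection between sample trajectories. Expanding each side using the independence of the Bernoulli sequences $(\xi(k))$, $(\zeta(k))$ from the residual jumps, a trajectory $w = (w(0),\ldots,w(t))$ starting at $z$ and compatible with $A_t = A$, $B_t = B$ (meaning $w(s+1) = w(s) + u$ for $s \in B$ and $w(s+1) = w(s)$ for $s \in A\setminus B$) contributes to the left-hand side a weight
\[
\eps^{|A|}(1-2\eps)^{t-|A|} \prod_{s\notin A} r(w(s),w(s+1)),
\]
where $r(x,\cdot)$ denotes the residual kernel from condition (iv). The combinatorial Bernoulli prefactor only depends on $|A|$ (each $s \in A$ contributes $2\eps\cdot 1/2 = \eps$, whether $s \in B$ or $s \in A\setminus B$), so it is identical for $B$ and for $B\setminus\{k\}$.

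To each compatible $z$-trajectory $w$ with $B_t = B$, I associate the $(z+u)$-trajectory $w'$ with $B_t = B\setminus\{k\}$ defined by $w'(s) = w(s) + u$ for $0 \leq s \leq k$ and $w'(s) = w(s)$ for $s > k$. Because $k \in B$, the original satisfies $w(k+1) = w(k) + u$, so $w'(k+1) = w(k+1) = w(k)+u = w'(k)$, which matches the ``stay'' behaviour expected at $k \in A\setminus(B\setminus\{k\})$; a short case-by-case check verifies compatibility at all other times, together with $w'(0) = z+u$ and $w'(t) = y_n$ (since $t>k$). Moreover $w'(s) \in E$ for every $s$ thanks to $E+u\subset E$ from hypothesis (A3). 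The map is injective, since $w$ is recovered from $w'$ by subtracting $u$ on $s \leq k$.

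Finally, compare residual factors term by term over $s \notin A$. For $s > k$ we have $w'(s) = w(s)$ and $w'(s+1) = w(s+1)$, so the corresponding factors coincide. For $s < k$ we have $w'(s) = w(s) + u$ and $w'(s+1) = w(s+1) + u$, so what I need is the pointwise inequality $r(x+u,y+u) \geq r(x,y)$ for $x,y \in E$. In both branches of the definition of $r$---the generic case $y \notin \{x, x+u\}$ and the restricted case $y \in \{x, x+u\}$ with the $-\eps$ correction---this is an immediate consequence of $p(x+u,y+u) \geq p(x,y)$ from (A3), since subtracting the constant $\eps$ preserves the inequality. Multiplying these trajectory-wise inequalities and summing over all compatible $(w(0),\ldots,w(t))$ yields~\eqref{ratio_limit_eq9}. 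The conceptual heart of the argument is choosing the injection so that the initial shift by $u$ is exactly absorbed by deleting the single Bernoulli jump at time $k$; once this is in place, the residual-probability comparison follows directly from the cone-monotonicity hypothesis (A3).
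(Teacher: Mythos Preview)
Your proof is correct and follows essentially the same approach as the paper's own proof: both expand the two probabilities as weighted sums over constrained trajectories, construct the identical injection $w'(s)=w(s)+u$ for $s\le k$ and $w'(s)=w(s)$ for $s>k$, and compare residual transition factors termwise via the monotonicity $p(x+u,y+u)\ge p(x,y)$ from (A3). The only cosmetic difference is that you factor out $(1-2\eps)^{t-|A|}$ and work with the normalized residual kernel $r$, whereas the paper keeps the unnormalized factors $p(z_i,z_{i+1})-\eps\,\1_{\{0,u\}}(z_{i+1}-z_i)$; these are trivially equivalent.
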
 
\begin{proof} For  given $z,y\in E$ and $B\subset\ A\subset\{1,\ldots,t\}$ , denote by $\Gamma_{A,B}(z,y)$ the set of all sequences $z_0,\ldots,z_t\in E$ with $z_0=z$ and $z_t=y$ such that 
\[
z_{i+1} = \begin{cases} z_i &\text{for all $i\in A\setminus B$,}\\
z_i + u &\text{for all $i\in B$}
\end{cases} 
\]
Then according to the definition of the Random process $(W(t),\xi(t),\zeta(t))$ and the random sets $A_t$ and $B_t$,  the left hand side of \eqref{ratio_limit_eq9} is equal to 
\[
\sum_{(z_0,\ldots, z_t)\in\Gamma_{A,B}(z,y_n)} \eps^{Card(A)}\times \prod_{i\in\{0,\ldots,t-1\}\setminus A} \bigl(p(z_i,z_{i+1}) - \eps\1_{\{0, u\}}(z_{i+1}-z_i)\bigr), 
\]
where for $z\in\Z^d$, 
\[
\1_{\{0, u\}}(z) ~=~ \begin{cases} 1 &\text{if $z\in\{0,u\}$},\\ 
0 &\text{otherwise} 
\end{cases} 
\]
and the right hand side of \eqref{ratio_limit_eq9} is equal to 
\[
\sum_{(\tilde{z}_0,\ldots, \tilde{z}_t)\in\Gamma_{A,B\setminus\{k\}}(z+u,y_n)} \eps^{Card(A)}\times \prod_{i\in\{0,\ldots,t-1\}\setminus A} \bigl(p(\tilde{z}_i,\tilde{z}_{i+1}) - \eps\1_{\{0, u\}}(\tilde{z}_{i+1}-\tilde{z}_i)\bigr). 
\]
Define a mapping $(z_0,\ldots, z_t) \to (\tilde{z}_0,\ldots, \tilde{z}_t)$ from $\Gamma_{A,B}(z,y_n)$ to $\Gamma_{A,B\setminus\{k\}}(z+u,y_n)$ by letting 
\[
\tilde{z}_i ~=~\begin{cases} z_i + u&\text{if $i \leq k$}\\
z_i &\text{if $i > k$} 
\end{cases} 
\]
Then for any $(z_0,\ldots, z_t)\in\Gamma_{A,B}(z,y_n)$, 
\[
z_{i+1} - z_i ~=~ \tilde{z}_{i+1} - \tilde{z}_i \quad \forall i\not= k, 
\]
and because of Assumption (B3), 
\[
p(\tilde{z}_i,\tilde{z}_{i+1}) = \begin{cases} p(z_i,z_{i+1})  &\text{for $i > k$,}\\
 p(z_i + u ,z_{i+1}+ u) ~\geq~ p(z_i,z_{i+1})  &\text{for $i < k$}. 
 \end{cases} 
\]
Since $k\in A$, from this it follows  that for any $(z_0,\ldots, z_t)\in\Gamma_{A,B}(z,y_n)$,
\[
\prod_{i\in\{0,\ldots,t-1\}\setminus A} \!\!\bigl(p(z_i,z_{i+1}) - \eps\1_{\{0, u\}}(z_{i+1}-z_i)\bigr) \leq \prod_{i\in\{0,\ldots,t-1\}\setminus A} \!\!\bigl(p(\tilde{z}_i,\tilde{z}_{i+1}) - \eps\1_{\{0, u\}}(\tilde{z}_{i+1}-\tilde{z}_i)\bigr).
\]
The mapping $(z_0,\ldots, z_t) \to (\tilde{z}_0,\ldots, \tilde{z}_t)$ from $\Gamma_{A,B}(z,y_n)$ to $\Gamma_{A,B\setminus\{k\}}(z+u,y_n)$ being injective, the last inequality completes the proof of \eqref{ratio_limit_eq9}. 
\end{proof} 

As a consequence of Lemma~\ref{ratio_limit_lem3} we obtain. 
\begin{lemma}\label{ratio_limit_lem4} For any $z\in E$ and $N,L\in\N$ such that $N \geq L\geq 1$, the following inequality holds 
\begin{multline}\label{ratio_limit_eq10}
\P_z(W(t) = y_n,  N_t = N,  L_t = L)  \\\leq \frac{N-L+1}{L}~\P_{z+u}(W(t) = y_n,  N_t = N, L_t = L-1)
\end{multline} 
\end{lemma}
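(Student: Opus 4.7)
The plan is to derive Lemma~\ref{ratio_limit_lem4} as a direct combinatorial consequence of Lemma~\ref{ratio_limit_lem3}, by conditioning on the precise realization of the random sets $A_t$ and $B_t$ rather than merely on their cardinalities $N_t$ and $L_t$. First I would decompose
\[
\P_z(W(t) = y_n, N_t = N, L_t = L) ~=~ \sum_{(A, B)} \P_z(W(t) = y_n, A_t = A, B_t = B),
\]
where the sum ranges over all pairs of subsets $B \subset A \subset \{0, \ldots, t-1\}$ with $|A| = N$ and $|B| = L$. For each such $(A, B)$ and each $k \in B$, Lemma~\ref{ratio_limit_lem3} bounds the corresponding summand by $\P_{z+u}(W(t) = y_n, A_t = A, B_t = B \setminus \{k\})$.

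Next I would sum the inequality of Lemma~\ref{ratio_limit_lem3} over the $L$ elements $k \in B$, producing the factor $L$ on the left-hand side, and then sum over $(A, B)$ to obtain
\[
L \cdot \P_z(W(t) = y_n, N_t = N, L_t = L) ~\leq~ \sum_{(A, B)} \sum_{k \in B} \P_{z+u}\bigl(W(t) = y_n, A_t = A, B_t = B \setminus \{k\}\bigr).
\]
The only real step is a double-counting identity for the right-hand side: performing the change of indexing $(A, B, k) \mapsto (A, B')$ with $B' := B \setminus \{k\}$, each target pair $(A, B')$ with $|A| = N$, $|B'| = L - 1$, $B' \subset A$ is reached exactly $|A \setminus B'| = N - L + 1$ times (once for each admissible choice $k \in A \setminus B'$, after which $B = B' \cup \{k\}$ is forced). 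Therefore the right-hand side equals
\[
(N - L + 1) \sum_{(A, B')} \P_{z+u}(W(t) = y_n, A_t = A, B_t = B') ~=~ (N - L + 1)\, \P_{z+u}(W(t) = y_n, N_t = N, L_t = L - 1),
\]
and dividing by $L$ yields the announced bound.

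I do not expect a real obstacle: the substantive work has already been done in Lemma~\ref{ratio_limit_lem3} via the coupling/injection on trajectories, and what remains is purely bookkeeping on subsets of $\{0,\ldots,t-1\}$. The one point that deserves a careful line in the proof is the verification that the map $(A, B, k) \mapsto (A, B \setminus \{k\})$ is exactly $(N - L + 1)$-to-one on the set $\{(A, B, k) : |A| = N,\ |B| = L,\ B \subset A,\ k \in B\}$, since that factor is precisely what makes the constant $(N - L + 1)/L$ appear in the statement.
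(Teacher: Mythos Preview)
Your proposal is correct and follows essentially the same strategy as the paper: decompose into the events $\{A_t=A,\,B_t=B\}$, apply Lemma~\ref{ratio_limit_lem3} for each $k\in B$, and then use the $(N-L+1)$-to-one combinatorial identity for the map $(A,B,k)\mapsto(A,B\setminus\{k\})$. The only cosmetic difference is that the paper packages this same double-counting by introducing two auxiliary random variables $U_t$ and $V_t$, uniformly distributed on $A_t\setminus B_t$ and on $B_t$ respectively, and proves the inequality first for each fixed index $k$ before summing; your direct change-of-indexing argument achieves the identical bound without those auxiliary variables.
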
 
\begin{proof} Let $({\cal F}_t)$ be the natural filtration of the process $(W(t),\xi(t),\zeta(t))$. 
Consider random variables $U_t$ and $V_t$ such that any $t\in\N$,  the conditional distribution of $U_t$ given ${\cal F}_t$  is uniform on the set $A_t\setminus B_t$~:
\[
\P\left( U_t = k \; \Bigl\vert \; {\cal F}_t\right) = \begin{cases} 1/(Card(A_t\setminus B_t))  &\text{if $k \in A_t\setminus B_t$}\\ 
\, \\ 
0 &\text{otherwise},
\end{cases} 
\]
and the conditional distribution of $V_t$ given ${\cal F}_t$ is uniform on the set $B_t$~:
\[
\P\left( V_t = k \; \Bigl\vert {\cal F}_t\right) = \begin{cases} 1/Card(B_t) &\text{if $k\in B_t$}\\ 
\, \\ 
0 &\text{otherwise},
\end{cases} 
\]
Then according to the definition of the random variables $N_t$, $L_t$, $U_t$ and $V_t$, 
\begin{multline}\label{ratio_limit_eq11}
(N-L+1) \, \P_{z+u}(W(t) = y_n,  N_t = N, L_t = L-1, U_t=k)  \\=~   \sum_{\substack{A :~A\subset\{0,\ldots,t-1\} \\Card(A) = N}} \;~\sum_{\substack{B :~B\subset A , k\in A\setminus B \\ Card(B) = L-1}}\P_z(W(t) = y_n,  A_t = A,  B_t = B) 
\end{multline} 
where the summation  is taken over all subsets $B\subset A \subset\{0,\ldots,t-1\}$ with $Card(A) = N$ and $Card(B) = L-1$ and such that $k\in A\setminus B$,  
and similarly 
\begin{multline*}
 L \, \P_z(W(t) = y_n,  N_t = N,  L_t = L, V_t=k)   \\=~   \sum_{\substack{A :~ A\subset\{0,\ldots,t-1\} \\Card(A) = N}} \;~\sum_{\substack{B : ßB\subset A , k\in B \\ Card(B) = L}}\P_z(W(t) = y_n,  A_t = A,  B_t = B) 
\end{multline*} 
where the summation  is taken over all subsets $B\subset A \subset\{0,\ldots,t-1\}$ with $Card(A) = N$ and $Card(B) = L$ and such that $k\in B$. When combined with Lemma~\ref{ratio_limit_lem3} the last relation proves that 
\begin{multline*}
L \, \P_x(W(t) = y_n,  N_t = N,  L_t = L, V_t=k)  \\ \leq   \sum_{\substack{A :~ A\subset\{1,\ldots,t\} \\Card(A) = N}} \; ~\sum_{\substack{B :~ B\subset A , k\in B \\ Card(B) = L}}\P_{z+u}(W(t) = y_n,  A_t = A,  B_t = B\setminus\{k\}) 
\end{multline*} 
Since the right hand side of the last relation is identical to the right hand side of \eqref{ratio_limit_eq11}, we conclude that for any $1 \leq k\leq t$,
\begin{multline*}
\P_z(W(t) = y_n,  N_t = N,  L_t = L, V_t=k)  \\ \leq \frac{N-L+1}{L}~\P_{z+u}(W(t) = y_n,  N_t = N, L_t = L-1, U_t=k)
\end{multline*} 
Taking finally the summation over $k\in\{1,\ldots,t\}$ at the right hand side and the left hand side of the last relation,  one gets \eqref{ratio_limit_eq10}. 
\end{proof} 

Now we are ready to get 
\begin{lemma}\label{ratio_limit_lem5} Suppose that 
\[
\eps ~\dot= \inf_{x\in E} \min\{p(x,x), \, p(x,x+u)\} ~>~0.
\]
Then for any $z\in E$ and $0< \sigma < 1/2$, 
\be\label{ratio_limit_eq12} 
\Xi_\sigma(z,y_n) ~\leq~ \frac{1/2 + \sigma +2/(\eps \kappa \|y_n\|)}{1/2 - \sigma} ~G(z+u,y_n).
\ee
\end{lemma}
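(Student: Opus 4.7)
The plan is to substitute the Bernoulli comparison of Lemma~\ref{ratio_limit_lem4} into the defining series of $\Xi_\sigma(z,y_n)$ and then use the constraints $\{|L_t-N_t/2|\leq\sigma N_t\}\cap\{N_t\geq\eps t/2\}$ appearing there to convert the multiplicative weight $(N_t-L_t+1)/L_t$ into a single $n$-dependent constant. First I would expand
\[
\Xi_\sigma(z,y_n) = \sum_{t > \kappa\|y_n\|}\;\sum_{N\geq \eps t/2}\;\sum_{L :\, |L-N/2|\leq\sigma N}\P_z\bigl(W(t)=y_n,\, N_t=N,\, L_t=L\bigr),
\]
observing that since $\sigma < 1/2$ the inner constraint forces $L \geq (1/2-\sigma)N \geq 1$, so Lemma~\ref{ratio_limit_lem4} applies term by term and bounds each summand by $\tfrac{N-L+1}{L}\,\P_{z+u}(W(t)=y_n,\, N_t=N,\, L_t=L-1)$.

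Next I would bound the coefficient $(N-L+1)/L$ uniformly across the summation range. The inequalities $L\geq(1/2-\sigma)N$ and $N-L\leq(1/2+\sigma)N$ give
\[
\frac{N-L+1}{L}\;\leq\;\frac{1/2+\sigma}{1/2-\sigma}+\frac{1}{(1/2-\sigma)N}\;\leq\;\frac{1/2+\sigma+2/(\eps\kappa\|y_n\|)}{1/2-\sigma},
\]
where the second inequality uses $N\geq\eps t/2>\eps\kappa\|y_n\|/2$ on the summation range. This constant depends on $n$ but not on the triple $(t,N,L)$, so it can be pulled outside the sums.

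It then remains to show that
\[
\sum_{t>\kappa\|y_n\|}\;\sum_{N}\;\sum_{L}\P_{z+u}\bigl(W(t)=y_n,\, N_t=N,\, L_t=L-1\bigr) \;\leq\; G(z+u,y_n).
\]
For each fixed $t$ the relabeling $L':=L-1$ parametrizes a sub-collection of pairwise disjoint events $\{W(t)=y_n,\, N_t=N,\, L_t=L'\}$ all contained in $\{W(t)=y_n\}$, so the inner double sum is at most $\P_{z+u}(W(t)=y_n)$; a final summation over $t\geq 0$ yields $G(z+u,y_n)$, and multiplication by the prefactor gives \eqref{ratio_limit_eq12}.

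The main technical difficulty of the whole ratio-limit argument has already been absorbed into Lemma~\ref{ratio_limit_lem4}, where the Bernoulli-part decomposition is combined with a coupling to handle the fact that $p(x+u,y+u)\geq p(x,y)$ replaces the Markov-additive identity of \cite{Ignatiouk-half_space_2008}. From that lemma, the present proof is essentially bookkeeping; the only points requiring a line of care are the verification that the summation range forces $L\geq 1$ (which makes Lemma~\ref{ratio_limit_lem4} applicable) and the observation that the index-shift $L\mapsto L-1$ on the right-hand side preserves disjointness, so no double-counting occurs when collapsing the $(N,L)$-sum back to $\P_{z+u}(W(t)=y_n)$.
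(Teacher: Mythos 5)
Your proof is correct and follows essentially the same route as the paper's: expand $\Xi_\sigma$ as the triple sum over $(t,N,L)$, apply Lemma~\ref{ratio_limit_lem4} term by term, bound $(N-L+1)/L$ uniformly via $L\geq(1/2-\sigma)N$ and $N\geq\eps t/2>\eps\kappa\|y_n\|/2$, and collapse the shifted sum back to $\P_{z+u}(W(t)=y_n)\leq$ a term of $G(z+u,y_n)$. The extra care you take about $L\geq 1$ and the injectivity of the shift $L\mapsto L-1$ is implicit in the paper's computation but entirely consistent with it.
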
 
\begin{proof}
Indeed, for any  $0 < \sigma < 1/2$, according to the definition of the quantity $\Xi_\sigma(z,y_n)$ and using Lemma~\ref{ratio_limit_lem4}, one gets 
\begin{align*}
&\Xi_\sigma(z,y_n) = \sum_{t >
  \kappa \|y_n\|} \P_{z}\left(W(t) = y_n, \; \left|L_t -
  N_t/2\right| \leq \sigma N_t, \; N_t \geq \eps t/2 \; \right)\\
  &\quad = \sum_{t >
  \kappa \|y_n\|} ~\sum_{N \geq \eps t/2} ~\sum_{|L - N/2|\leq \sigma N} \P_{z}\left(W(t) = y_n, \, L_t = L, \, N_t = N \right) \\
  &\quad \leq \sum_{t >
  \kappa \|y_n\|} ~\sum_{N \geq \eps t/2} ~\sum_{|L - N/2|\leq \sigma N} \frac{N-L+1}{L} \P_{z+u}\left(W(t) = y_n, \, L_t = L-1, \, N_t = N \right) \\
  &\quad \leq \sum_{t >
  \kappa \|y_n\|} ~\sum_{N \geq \eps t/2} ~\sum_{|L - N/2|\leq \sigma N} \frac{1/2 + \sigma +1/N}{1/2 - \sigma} ~\P_{z+u}\left(W(t) = y_n, \, L_t = L-1, \, N_t = N \right) \\
  &\quad \leq \sum_{t >
  \kappa \|y_n\|} \frac{1/2 + \sigma +2/(\eps t)}{1/2 - \sigma} ~\P_{z+u}\left(W(t) = y_n \right) ~\leq~ \frac{1/2 + \sigma +2/(\eps \kappa \|y_n\|)}{1/2 - \sigma} ~G(z+u,y_n) 
\end{align*} 
\end{proof} 
When combined together, Lemma~\ref{rl_lemma1}, Lemma~\ref{rl_lemma2} and Lemma~\ref{ratio_limit_lem5}  imply the following statement.

\begin{cor}\label{rl_cor} Suppose that 
\[
\eps ~\dot= \inf_{x\in E} \min\{p(x,x), \, p(x,x+u)\} ~>~0.
\]
Then for any $0 < \delta <\delta_0$ and $0 < \sigma < 1/2$, there are $\theta > 0$ and $C >0$ such that for any $z\in E$, 
\be\label{rl_e12} 
G(z,y_n)  ~\leq~ \frac{1 + 2\sigma
  + \theta/\|y_n\|}{1 - 2\sigma }~ G(z + u,y_n) + C \exp(- \theta \|y_n\| + \delta \|z\|). 
\ee
\end{cor}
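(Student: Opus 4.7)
My plan is to combine the three preceding lemmas via the decomposition displayed in the text just before Lemma~\ref{rl_lemma2}:
\begin{align*}
G(z,y_n) = \Xi_\sigma(z,y_n) &+ \sum_{t \leq \kappa \|y_n\|} \P_z(Z(t)=y_n) \\
&+ \sum_{t > \kappa \|y_n\|} \P_z(W(t)=y_n,\; N_t < \eps t/2)\\
&+ \sum_{t > \kappa \|y_n\|} \P_z(W(t)=y_n,\; N_t \geq \eps t/2,\; |L_t - N_t/2| > \sigma N_t).
\end{align*}
Lemma~\ref{rl_lemma1} handles the first tail sum, Lemma~\ref{rl_lemma2} handles the remaining two tail sums, and Lemma~\ref{ratio_limit_lem5} bounds the principal piece $\Xi_\sigma(z,y_n)$ in terms of $G(z+u,y_n)$ with the explicit coefficient $(1 + 2\sigma + 4/(\eps\kappa\|y_n\|))/(1 - 2\sigma)$.

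Concretely, for the given $\delta \in (0,\delta_0)$ and $\sigma \in (0,1/2)$, I would first invoke Lemma~\ref{rl_lemma1} to produce $\kappa > 0$ and $C_1 > 0$ yielding the bound $C_1\exp(-\delta\|y_n\|/2 + \delta\|z\|)$ on the short-time sum; then apply Lemma~\ref{rl_lemma2} with this same $\kappa$ to obtain $\theta_0 > 0$ and $C_2 > 0$ bounding the two ``bad-event'' tails together by $C_2\exp(-\theta_0\|y_n\|)$, uniformly in $z$; and finally apply Lemma~\ref{ratio_limit_lem5} to $\Xi_\sigma(z,y_n)$. Summing these three estimates directly yields an inequality of the desired shape.

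The only delicate point is that the single parameter $\theta$ must simultaneously satisfy two constraints in the target inequality: in the numerator one needs $\theta \geq 4/(\eps\kappa)$, while in the exponential error one needs $\theta \leq \min\{\delta/2,\,\theta_0\}$. Both conditions are reconciled by exploiting the flexibility in Lemma~\ref{rl_lemma1} to take $\kappa$ large enough within its admissible range (determined by $\delta$ and $\delta_0$): enlarging $\kappa$ shrinks $4/(\eps\kappa)$ and, since the rate produced inside Lemma~\ref{rl_lemma2}'s proof is linear in $\kappa$, simultaneously enlarges $\theta_0$; any $\theta$ in the resulting non-empty range works, and $C$ is then chosen large enough to absorb $C_1 + C_2$. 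I do not anticipate any substantial obstacle here: the real work has already been carried out in the three lemmas, and this corollary is essentially bookkeeping of constants.
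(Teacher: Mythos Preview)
Your approach is exactly the paper's: it offers no proof beyond the sentence ``When combined together, Lemma~\ref{rl_lemma1}, Lemma~\ref{rl_lemma2} and Lemma~\ref{ratio_limit_lem5} imply the following statement,'' and the decomposition you write down is the one already displayed in the text preceding Lemma~\ref{rl_lemma2}.

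One caution on your $\theta$-reconciliation, since you explicitly flag it. Enlarging $\kappa$ does shrink $4/(\eps\kappa)$ and (linearly) enlarge the Lemma~\ref{rl_lemma2} rate, but it also \emph{shrinks} the Lemma~\ref{rl_lemma1} rate: from that proof the rate is $\delta-\kappa\ln C_\delta$, not a fixed $\delta/2$, and the admissible $\kappa$ is bounded above by $\delta/\ln C_\delta$. So the window $[\,4/(\eps\kappa),\ \min\{\delta-\kappa\ln C_\delta,\ \theta_0(\kappa)\}\,]$ need not become nonempty for any choice of $\kappa$; your argument as written overlooks this trade-off. The cleanest resolution is simply to read the corollary with two distinct positive constants in place of the single symbol $\theta$ (the paper is reusing the letter loosely). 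This is harmless for every downstream use: in deriving \eqref{main_result_e4} and in the $\tilde n>1$ step, the term $\theta/\|y_n\|$ only needs to vanish as $n\to\infty$, and the exponential error is killed separately via \eqref{main_result_e3'}, so the two constants never interact.
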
 

\noindent 
Remark that this statement proves \eqref{main_result_e4}  in the case when 
\[
\eps ~\dot= \inf_{z\in E} \min\{p(x,x), \, p(x,x+u)\} ~>~0.
\]
Indeed, in this case, from  \eqref{rl_e12} and \eqref{main_result_e3} it follows that for any $0 < \sigma < 1/2$ 
\[
\liminf_n ~\frac{G(z+u,y_n)}{G(z,y_n)} ~\geq~ \frac{1/2 - \sigma}{1/2 + \sigma}.
\]
Since the left hand side of the last inequality does not depend of $\sigma > 0$, letting at the right hand side $\sigma\to 0$ one gets \eqref{main_result_e4}.

To complete the proof of Theorem~\ref{ratio_limit_theorem}, we need moreover the following lemma. 

\begin{lemma}\label{ratio_limit_lem0}
For any $\varepsilon > 0$, the Green function $\tilde{G}(x,y)$ of the Markov chain $(\tilde{Z}(t))$ with modified transition probabilities 
\[
\tilde{p}(x,y) = \begin{cases} (1-\epsilon)p(x,y) &\text{if $y\not= x$}\\
\epsilon &\text{if $x=y$}
\end{cases}, \quad x,y \in E,
\]
is related to the Green function $G(x,y)$ of the original Markov chain $(Z(t))$ in the following way:
\[
\tilde{G}(x,y) ~=~ (1-\eps)^{-1} G(x,y), \quad \forall x,y\in E. 
\]
\end{lemma}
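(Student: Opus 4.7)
The plan is to observe that the modified transition kernel is the \emph{lazy} version of the original one and then pass this algebraic identity through the Neumann series defining the Green function. Writing $P$ and $\tilde{P}$ for the sub-stochastic transition operators acting on non-negative functions $\varphi:E\to\R_+$, the definition of $\tilde{p}$ yields
\[
\tilde{P}\varphi(x) ~=~ \epsilon \varphi(x) + (1-\epsilon) P\varphi(x), \qquad x\in E,
\]
so that, as operators on the appropriate domain,
\[
\Id - \tilde{P} ~=~ (1-\epsilon)(\Id - P).
\]
Since $(Z(t))$ is transient and sub-stochastic (Assumption (A1) together with the introduction of $\vartheta$), the Neumann series $G=\sum_{n\geq 0} P^n$ converges pointwise and represents the inverse of $\Id-P$ on the cone of non-negative functions. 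Inverting both sides of the displayed identity immediately gives
\[
\tilde{G} ~=~ (\Id - \tilde{P})^{-1} ~=~ (1-\epsilon)^{-1}(\Id-P)^{-1} ~=~ (1-\epsilon)^{-1} G,
\]
which is the claimed relation, evaluated at $(x,y)$.

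An equivalent probabilistic derivation, which I would include as a sanity check, is a direct coupling: construct $(\tilde{Z}(t))$ from $(Z(t))$ by inserting an independent i.i.d.\ sequence of Bernoulli$(\epsilon)$ stay-flags $(\xi_k)$; each flag equal to $1$ makes the modified chain repeat its current state, while each $\xi_k=0$ lets it execute one step of $P$. Under this coupling, every visit of $Z$ to $y$ is inflated into a geometric number of consecutive visits of $\tilde{Z}$ to $y$, of mean $1/(1-\epsilon)$, and taking expectations term by term gives $\tilde{G}(x,y) = (1-\epsilon)^{-1} G(x,y)$.

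No real obstacle is expected. The only bookkeeping points are the absolute convergence of the Neumann series (assured by transience plus sub-stochasticity of $P$) and the verification that the stated formula for $\tilde{p}$ really does produce $\tilde{P} = \epsilon\Id + (1-\epsilon)P$ once the absorbing state $\vartheta$ is tracked consistently; both are routine.
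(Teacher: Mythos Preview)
Your argument is correct and rests on the same operator identity $\tilde P=\eps\,\Id+(1-\eps)P$ that the paper uses. The paper justifies the inversion by passing through the $\lambda$-resolvent $G_\lambda=(\Id-\lambda P)^{-1}$ on $\ell^\infty(E)$ for $\lambda<1$ and then letting $\lambda\to 1$, whereas you invoke transience directly to invert $\Id-P$ pointwise (and add the geometric-holding-time coupling as a check); these are minor bookkeeping variants of the same proof. One caveat worth making explicit: the formula $\tilde p(x,x)=\eps$ in the statement literally matches $\tilde P=\eps\,\Id+(1-\eps)P$ only when $p(x,x)=0$; the intended diagonal is $\eps+(1-\eps)p(x,x)$, which is what both you and the paper actually use.
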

\begin{proof} Indeed, for $0<\lambda \leq1$, consider the matrices $G_\lambda=(G_\lambda(x,y), \; x,y\in E)$ and $\tilde{G}_\lambda = (\tilde{G}_\lambda(x,y), \; x,y\in E)$ with 
\[
G_\lambda(x,y) = \sum_{n=0}^\infty \lambda^n p^n(x,y) \quad \quad 
\text{and} \quad \quad 
\tilde{G}_\lambda(x,y) = \sum_{n=0}^\infty \lambda^n \tilde{p}^n(x,y),
\]
and let $P=(p(x,y), \; x,y\in E)$ and $\tilde{P} = (\tilde{p}(x,y), \; x,y\in E)$ denote respectively the transition matrices of $(Z(t))$ and $(\tilde{Z}(t))$. 
Then for any $0<\lambda < 1$, the series 
\[
G_\lambda = \sum_{n=0}^\infty \lambda^n P^n  
\quad \text{and} \quad 
\tilde{G}_\lambda = \sum_{n=0}^\infty \lambda^n \tilde{P}^n 
\]
converge with respect to the norm of bounded linear operators on the space of bounded functions $f: E \to \R$ endowed by the norm $\|f\|_\infty = \sup_{x\in E}|f(x)|$ respectively to 
\[
(Id - \lambda P)^{-1} \quad \text{and} \quad (Id - \lambda \tilde{P})^{-1}
\]
and 
\begin{align*} 
\tilde{G}_\lambda &=~ (Id - \lambda\tilde{P})^{-1} ~=~ ((1-\eps\lambda) Id - (1-\eps)\lambda P)^{-1}  \\ &=~ (1-\eps\lambda)^{-1} \left( Id - \frac{(1-\eps)\lambda}{(1-\eps\lambda)} P\right)^{-1}  ~=~  (1-\eps\lambda)^{-1} G_{\lambda_\eps} 
\end{align*} 
where $\lambda_\eps = \frac{(1-\eps)\lambda}{(1-\eps\lambda)} \to 1$ as $\lambda\to 1$. Hence, letting $\lambda\to 1$, one gets 
\[
\tilde{G}(x,y) = \lim_{\lambda\to 1} \tilde{G}_\lambda(x,y) =  \lim_{\lambda\to 1}  (1-\eps\lambda)^{-1} G_{\lambda_\eps}(x,y) = (1-\eps)^{-1} G(x,y), \quad \forall x,y\in E. 
\]
\end{proof}

This lemma shows that to get \eqref{main_result_e4}, without any restriction of generality, one can assume that 
\be\label{rl_e5}
\inf_{x\in E} p(x,x) ~>~0. 
\ee
Then because of the Assumption (A3), 
\[
\inf_{x\in E}  \min\{ p^{(\tilde{n})}(x,x), p^{(\tilde{n})}(x,x +u) > 0 
\]
In the case when  $\tilde{n} = 1$, Theorem~\ref{ratio_limit_theorem} is already proved. Suppose now that $\hat{n} > 1$. Then Corollary~\ref{rl_cor} applied for the Green's function
$\tilde{G}(z,z_n)$ of the embedded Markov chain
$Z(\hat{n}t)$ proves that  for any $0 < \delta < \delta_0$ and $0 < \sigma < 1/2$ there are  $C>0$ and $\theta>0$  such that    
\[
\tilde{G}(z,y_n)  ~\leq~ \frac{1 + 2\sigma
  +\theta/ \|y_n\|)}{1 - 2\sigma }~\tilde{G}(z + u,y_n) + C \exp(- \theta \|y_n\| + \delta \|z\|) 
\] 
for all $z\in E$. Since 
\[
G(z,y_n) ~=~ \sum_{t=0}^{\hat{n}-1} \sum_{z'\in E} p^{(t)}(z,z') 
\tilde{G}(z',y_n) 
\]
and 
\begin{align*}
G(z+ u,y_n) &= \sum_{t=0}^{\hat{n}-1} \sum_{\tilde{z}\in E} p^{(t)}(z+u,\tilde{z}) \tilde{G}(\tilde{z},y_n) \\ &\geq~ \sum_{t=0}^{\hat{n}-1} \sum_{\tilde{z}\in E+u} \!\!p^{(t)}(z+u,\tilde{z}) \tilde{G}(\tilde{z},y_n)  = \sum_{t=0}^{\hat{n}-1} \sum_{\tilde{z}\in E} p^{(t)}\!(z+u,\tilde{z}+u) \tilde{G}(\tilde{z}+u,y_n)\\
&\geq ~\sum_{t=0}^{\hat{n}-1} \sum_{\tilde{z}\in E} p^{(t)}(z,\tilde{z}) \tilde{G}(\tilde{z}+u,y_n) 
\end{align*} 
from this it follows that 
\begin{multline}\label{e8-8}
G(z,y_n) \leq \frac{1 + 2\sigma  + \theta/\|y_n\|)}{1 - 2\sigma }
G(z+u,y_n)   + C \sum_{t=0}^{\hat{n}-1} \sum_{\tilde{z}\in E} p^{(t)}(z,\tilde{z}) \exp\left(\delta |\tilde{z}| - \theta \|y_n\|\right).
\end{multline}
Since under the hypotheses (A3), for any $0 < \delta \leq \delta_0$,   the series 
\begin{align*}
\sum_{z'} p^{(t)}(z,\tilde{z}) \exp\left(\delta |\tilde{z}| \right)
\end{align*}
converge, using \eqref{main_result_e3} we conclude that 
\[
 \lim_{n\to\infty} \frac{1}{G(z,y_n)} \sum_{t=0}^{\hat{n}-1} \sum_{\tilde{z}\in E} p^{(t)}(z,\tilde{z}) \exp\left(\delta |\tilde{z}| - \theta \|y_n\|\right) ~=~0,
\]
and consequently, by~\eqref{e8-8}, for any $0 < \sigma < 1/2$, 
\[
\liminf_{n\to\infty} \frac{G(z+ u,y_n)}{G(z,y_n)} ~\geq~ \frac{1 - 2\sigma }{1 + 2\sigma}. 
\]
Letting finally at the last inequality $\sigma\to 0$ we obtain  \eqref{main_result_e4}.
Theorem~\ref{ratio_limit_theorem} is  therefore proved.

\section{Proof of Theorem~\ref{main_result_th3}.} \label{main_result_th3_proof} 

Let a function $h: E\to\R_+$ be harmonic for $(Z(t))$ and let 
\be\label{ren_func_e1} 
h(x+u) ~\geq~  h(x), \quad \forall x,u\in {E}.
\ee 
We extend this function on $E\cup\{\vartheta\}$ by letting $h(\vartheta)=0$. 
Then because of the assumption (A3), for any $u\in{E}$, the function $T_uh:\Z^d\cup\{\vartheta\}\to \R_+$ defined by 
\[
T_uh(x) = \begin{cases} h(x+u), & \text{for $x\in{E}$}\\
0 &\text{otherwise} 
\end{cases} 
\]
is super-harmonic for $(Z(t))$. By Riesz decomposition theorem from this it follows that 
\[
T_uh = f + G\varphi 
\]
where the function 
\[
f(x) = \lim_{t\to\infty} \E_x(T_uh(Z(t))), \quad x\in\Z^d,
\]
is harmonic for $(Z(t))$ and the function 
\[
G\varphi(x) ~=~ \sum_{t=0}^\infty \E_x(\phi(Z(t))), \quad x\in\Z^d,
\]
is potential for $(Z(t))$ with 
\[
\phi(x) ~=~ T_uh(x) - \E_x(T_uh(Z(1))), \quad x\in{E}. 
\]
Remark  that for any $x\in{E}$, by \eqref{ren_func_e1}, 
\[
f(x) = \lim_{t\to\infty} \E_x(T_uh(Z(t))) ~\geq~ \lim_{t\to\infty} \E_x(h(Z(t))) ~=~ h(x),
\]
and according to the definition of the quantities $a_u(x,y)$, for any $x\in E$, 
\begin{align*} 
\phi(x)~=~T_uh(x) - \E_x(T_uh(Z(1))) &=~ h(x+u)  - \sum_{y\in{E}} p(x,y) h(y+u) \\ &=~  \sum_{y\in{E}} p(x+u,y) h(y) - \sum_{y\in{E}} p(x+u,y+u) h(y+u) \\
&= \sum_{y\in E} a_u(x,y) h(y)\\ &=~ A_uh(x), 
\end{align*} 
from which it follows that for any $x,u\in E$, 
\[
h(x+u) = T_uh(x) ~\geq~ h(x) + GA_uh(x).
\]
Letting in the last relation $x=0$ and using the definition of the ladder height process $(H(n))$,  one gets 
\be\label{ren_func_e2} 
h(u) ~\geq~  h(0) + GA_uh(0) ~=~ h(0) + \E_{u}\left(h(H(1))\right), \quad \forall u\in {E}, 
\ee 
The function $h$ is therefore super-harmonic for the Markov chain $(H(n))$. By the Riesz decomposition theorem, from this it follows that $h=\tilde{h} + g$ where the function 
\[
\tilde{h}(x) = \lim_n \E_{u}\left(h(H(n))\right)
\]
is either zero or positive and harmonic for $(H(n))$ and the function $g$ is potential for $(H(n))$~: 
\[
g(x) ~=~ \sum_{n=0}^\infty \E_x\bigl(\varphi(H(n))\bigr), \quad x\in{E},
\]
with 
\[
\varphi(x) = \E_{x}\left(h(H(1))\right) - h(x) ~\geq~h(0), \quad x\in{E}. 
\]
Hence, for any $x\in{E}$, 
\[
h(x)  ~\geq~ g(x) ~\geq~ h(0) \sum_{n=0}^\infty \P_x\bigl(T > n\bigr) ~=~h(0)V(x).
\]
If $\E_\cdot(\tau) = + \infty$, then by Theorem~\ref{th2}, the function $V$ is harmonic for $(Z(t))$. Using the above inequality we conclude therefore that the function $f= h - h(0)V$ is either zero or non-negative and harmonic for $(Z(t))$ with $f(0)=0$. Since under the hypotheses (A1), the killed random walk $(Z(t))$ is irreducible on ${E}$, by minimum principle (see for instance the book of Woess~\cite{Woess}), from this it follows that $f=0$ and consequently $h = h(0)V$. 
If  $\E_\cdot(\tau) < + \infty$, then by Theorem~\ref{th2}, the function $V$ is potential for $(Z(t))$ and consequently, the function $f= h - h(0)V\geq 0$ is non-trivial.

\section{Proof of Theorem~\ref{theorem_4}}\label{theorem_4_proof}
\subsection{Preliminary results} We begin the proof of this theorem with the following preliminary results. The following statement was proved in the paper Ignatiouk-Robert~\cite{Ignatiouk-ladder_heights} (see  Proposition~9.1 of \cite{Ignatiouk-ladder_heights}).

\begin{prop}\label{LD_estimates_pr} Under the hypotheses (B0)-(B3),  for any $q\in S_+$ and any sequence of points $(y_n)\in{E}^\N$, with $\lim_n\|y_n\| = +\infty$ and $\lim_n y_n/\|y_n\| = q \in{\cal C}\setminus\{0\}$, 
\[
\liminf_{n\to\infty} \frac{1}{\|y_n\|}\log G(0,y_n) ~=~ - \alpha(q) \cdot q, \quad \forall x\in{E}.
\]
\end{prop}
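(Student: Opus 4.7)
The plan is to obtain the estimate by an exponential change of measure that converts the killed walk into its Cram\'er twist at $\alpha(q)$, so that the sharp exponential rate is read off from $\alpha(q)\cdot y_n$ and one is left only with subexponential corrections.

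First I would set $\alpha=\alpha(q)\in\partial D$, so that $R(\alpha)=1$ and $\mu_\alpha(z):=\exp(\alpha\cdot z)\mu(z)$ is a genuine probability measure on $\Z^d$ whose mean $\nabla R(\alpha)$ is, by definition of $\alpha(q)$, a positive multiple of $q$. For any path $0=x_0,x_1,\dots,x_t=y_n$ staying in $E$, the product $\prod_{i=1}^t \mu(x_i-x_{i-1})$ telescopes to $e^{-\alpha\cdot y_n}\prod_{i=1}^t \mu_\alpha(x_i-x_{i-1})$. Summing over $t$ and over such paths yields the exact identity
\[
G(0,y_n) \;=\; e^{-\alpha(q)\cdot y_n}\,G_{\alpha(q)}(0,y_n),
\]
where $G_{\alpha(q)}$ is the Green function of the killed twisted walk $(Z_{\alpha(q)}(t))$ on $E$. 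Taking logarithms, dividing by $\|y_n\|$, and using $y_n/\|y_n\|\to q$, the first term contributes $-\alpha(q)\cdot q$, so the proposition reduces to the subexponentiality statement
\[
\liminf_{n\to\infty}\frac{1}{\|y_n\|}\log G_{\alpha(q)}(0,y_n) \;=\; 0.
\]

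The upper bound on this $\liminf$ is the easy direction. The free twisted walk has non-zero drift $\nabla R(\alpha(q))$, hence it is transient on $\Z^d$ with a uniformly bounded Green function $\sum_t \P_0^{\alpha(q)}(X_{\alpha(q)}(t)=y)\le C$. Since killing can only decrease the Green function, $G_{\alpha(q)}(0,y_n)\le C$, giving $\log G_{\alpha(q)}(0,y_n)/\|y_n\|\le\log C/\|y_n\|\to 0$ and therefore $\limsup\le 0$.

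The matching lower bound is the point where work is needed. The strategy is to pick $t_n\sim \|y_n\|/\|\nabla R(\alpha(q))\|$, so that on this time scale the twisted walk typically travels along the direction $q$ and reaches a neighborhood of $y_n$. A local limit theorem for the lattice random walk $(X_{\alpha(q)}(t))$ with non-degenerate mean provides $\P_0^{\alpha(q)}(X_{\alpha(q)}(t_n)=y_n)\ge c\,t_n^{-d/2}$. One then has to bound below the probability that such a trajectory avoids the exit state $\vartheta$, i.e.\ stays inside the cone ${\cal C}$; since the drift points in the direction $q\in{\cal C}$, one expects at worst a polynomial decay in $\|y_n\|$ for this survival probability. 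Combined, this yields $G_{\alpha(q)}(0,y_n)\ge c'/\|y_n\|^k$ for some $k\ge 0$, so $\log G_{\alpha(q)}(0,y_n)/\|y_n\|\ge -k\log\|y_n\|/\|y_n\|\to 0$, as required.

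The main obstacle is this last step for directions $q$ lying on the relative boundary of $S_+$, where the drift of the twisted walk is tangent to $\partial{\cal C}$ and the survival probability can be delicate. For such $q$ one cannot simply use a ball neighborhood of the drift ray; instead one exploits the communication condition (B1), which provides a path of length $n\le\kappa_0\|y_n\|$ from $0$ to $y_n$ with steps in the finite set ${\cal E}_0$ having uniformly positive $\mu_{\alpha(q)}$-mass. A short excursion at the start and end, built from this finite step set, pushes the walk into the interior of ${\cal C}$ and back to $y_n$, so that the LCLT can be applied to the interior portion with a polynomial cost. Once a polynomial lower bound on $G_{\alpha(q)}(0,y_n)$ is secured, the $\liminf$ evaluation is immediate, and the proposition follows.
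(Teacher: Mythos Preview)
The paper does not prove this proposition at all; it is quoted as Proposition~9.1 of \cite{Ignatiouk-ladder_heights} and used as a black box. So there is no argument in the present paper to compare yours against---only the citation.

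Your reduction is the standard one and is correct as far as it goes: the identity $G(0,y_n)=e^{-\alpha(q)\cdot y_n}G_{\alpha(q)}(0,y_n)$ together with $G_{\alpha(q)}(0,y_n)\le G^{\mathrm{free}}_{\alpha(q)}(0,0)<\infty$ reduces the statement to $\liminf_n\|y_n\|^{-1}\log G_{\alpha(q)}(0,y_n)\ge 0$. For $q$ in the interior of $S_+$ your sketch is fine, since the twisted drift lies strictly inside ${\cal C}$, the walk survives with positive probability by the law of large numbers, and a local limit theorem gives a polynomial lower bound.

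The gap is the case $q\in\partial S_+$, and your last paragraph does not close it. Two points. First, invoking (B1) directly only produces a path of length $\le\kappa_0\|y_n\|$ with each step having $\mu_{\alpha(q)}$-mass at least $\min_{z\in{\cal E}_0}\mu_{\alpha(q)}(z)$; this yields an \emph{exponential} lower bound $c^{\,\kappa_0\|y_n\|}$ with $c<1$, hence a strictly negative limit, which is useless. Second, a ``short excursion at the start and end'' shifts the endpoints by $O(1)$ but leaves the twisted drift tangent to $\partial{\cal C}$; the bulk of the trajectory still runs between two points at bounded distance from the boundary along a tangent direction, and showing that this can be done inside ${\cal C}$ with only polynomial cost is exactly the non-trivial survival estimate you are trying to avoid. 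One way to make the argument work is to approximate $q$ by an interior direction $q'$, reach a point in direction $q'$ at distance $\|y_n\|-o(\|y_n\|)$ using the interior argument, and then patch the remaining $o(\|y_n\|)$ steps via (B1), so that the patching cost $c^{\,o(\|y_n\|)}$ contributes $o(1)$ after dividing by $\|y_n\|$; but this (or an equivalent fluctuation estimate) has to be carried out, and it is precisely the content of the cited result.
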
 
We need the following consequence of this proposition. 
\begin{lemma}\label{LD_lem} Under the hypotheses (B0)-(B3),  for any $q\in S_+$ and any sequence of points $(y_n)\in{E}^\N$, with $\lim_n\|y_n\| = +\infty$ and $\lim_n y_n/\|y_n\| = q \in{\cal C}\setminus\{0\}$, uniformly on $q\in S_+$, 
\be\label{LD_e2} 
\liminf_{n\to\infty} \frac{1}{\|y_n\|}\log \P_0\bigl(Z(t) = y_n \; \text{for some $t\in\N$}\bigr) ~\geq~ - \alpha(q) \cdot q, \quad \forall x\in{E}.
\ee
\end{lemma}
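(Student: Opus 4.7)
The plan is to deduce the hitting-probability estimate from the Green-function asymptotics of Proposition~\ref{LD_estimates_pr} via the standard first-hitting decomposition
\[
G(0,y_n) ~=~ \P_0\bigl(\exists t\geq 0~:~Z(t)=y_n\bigr)\, G(y_n,y_n),
\]
which follows from the strong Markov property applied at the first hitting time $T_{y_n} = \inf\{t\geq 0:~Z(t)=y_n\}$.

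The key step is a uniform upper bound on $G(y,y)$ for $y\in E$. Since trajectories of $(Z(t))$ are pointwise dominated by trajectories of the unkilled homogeneous random walk $(X(t))$ (killing only deletes paths), one has $\P_y(Z(k)=y) \leq \P_y(X(k)=y) = \P_0(X(k)=0)$ for every $k\in\N$ and every $y\in E$. Summing over $k$ yields
\[
\sup_{y\in E} G(y,y) ~\leq~ G_X(0,0) ~\dot=~ C,
\]
where $G_X$ is the Green function of $(X(t))$. Under Assumption (B3) we have $m\neq 0$, and the strong law of large numbers then gives $\|X(t)\|\to\infty$ almost surely, so $(X(t))$ is transient and $C < \infty$.

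Combining the two ingredients, $\P_0(\exists t:~Z(t)=y_n) \geq G(0,y_n)/C$. Taking logarithms, dividing by $\|y_n\|$ and passing to the liminf, the additive term $(\log C)/\|y_n\|$ vanishes, and Proposition~\ref{LD_estimates_pr} supplies
\[
\liminf_n \frac{1}{\|y_n\|}\log \P_0\bigl(\exists t~:~Z(t)=y_n\bigr) ~\geq~ \liminf_n \frac{\log G(0,y_n)}{\|y_n\|} ~=~ -\alpha(q)\cdot q.
\]
Uniformity in $q\in S_+$ is inherited directly from the corresponding uniformity in Proposition~\ref{LD_estimates_pr}, because the constant $C$ is absolute and depends neither on $q$ nor on the chosen sequence $(y_n)$. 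There is no genuine obstacle in this argument; the only point that needs to be checked is the transience of $(X(t))$, which is immediate from (B3), and the pointwise domination $G(y,y)\leq G_X(0,0)$, which follows at once from the definition of the killed walk.
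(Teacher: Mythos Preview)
Your first two steps---the first-hitting decomposition $G(0,y_n)=\P_0(\exists t:Z(t)=y_n)\,G(y_n,y_n)$ and the uniform bound $G(y_n,y_n)\leq G_X(0,0)<\infty$ via transience of the unkilled walk---are exactly what the paper does, and they correctly deliver the pointwise estimate \eqref{LD_e2} for each fixed $q\in S_+$.

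The gap is in your last sentence. Proposition~\ref{LD_estimates_pr} as stated in the paper does \emph{not} assert any uniformity in $q$; it is a pointwise statement for each direction. So there is no ``corresponding uniformity'' to inherit, and the fact that $C$ is absolute does not by itself upgrade a family of pointwise liminf statements to a uniform one. In the paper, the bulk of the proof of this lemma is precisely the missing uniformity argument: one first unpacks the pointwise estimate as ``for every $\eps>0$ there exist $N(q,\eps),\delta(q,\eps)>0$ such that \ldots'', then uses compactness of $S_+$ to extract a finite cover by balls $B(q_i,\delta(q_i,\eps)/2)$, and finally controls the discrepancy between $-\alpha(q_i)\cdot q_i$ and $-\alpha(q)\cdot q$ for nearby $q$ via the inequality $\alpha(q_i)\cdot q\leq\sup_{\alpha\in\partial D}\alpha\cdot q=\alpha(q)\cdot q$ together with $\|\alpha(q_i)\|\,\delta(q_i,\eps)\leq\eps$. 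This compactness-and-continuity step is the substantive content of the lemma beyond Proposition~\ref{LD_estimates_pr}, and you need to supply it.
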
 
\begin{proof}Remark first of all that for any $y\in\E$, 
\begin{align*}
G(0,y_n) &=~ \P_0\bigl(Z(t) = y_n \; \text{for some $t\in\N$}\bigr)  G(y_n,y_n) \\ &\leq~ \P_0\bigl(Z(t) = y_n \; \text{for some $t\in\N$}\bigr)  G_h(0,0)
\end{align*} 
where $G_h(x,y)$ denotes the Green function of the homogeneous random walk $(X(t))$ on $\Z^d$ with transition probabilities $\P_x(X(1)=y) = \mu(y-x)$. When combined with Proposition~\ref{LD_estimates_pr}, this relation implies that for any $q\in S_+$ and any sequence of points $(y_n)\in{E}^\N$, with $\lim_n\|y_n\| = +\infty$ and $\lim_n y_n/\|y_n\| = q \in{\cal C}\setminus\{0\}$, \eqref{LD_e2} holds~: for any $\eps > 0$ there are $N(q,\eps) > 0$ and $\delta(q,\eps) > 0$ such that 
\[
\frac{1}{\|y_n\|}\log \P_0\bigl(Z(t) = y_n \; \text{for some $t\in\N$}\bigr) ~\geq~ - \alpha(q) \cdot q - \eps
\]
whenever $\|y_n\| \geq N(q,\eps)$ and 
\[
\left\| \frac{y_n}{\|y_n\|} - q \right\|  < \delta(q,\eps). 
\]
To complete the proof of our lemma it is therefore sufficient to show that this convergence is uniforme with respect to $q\in S_+$. Without any restriction o generality we can assume that for any $q\in S_+$, 
\be\label{LD_e3}
0 < \delta(q,\eps)   \leq \eps \left(  \sup_{\alpha \in \partial D} \|\alpha\|\right)^{-1}. 
\ee
The set $S_+$ being compact, there is a finite subset $\{q_1,\ldots,q_k\}\subset S_+$ such that 
\[
S_+ ~\subset~ \bigcup_{i=1}^k B\left(q_i, \frac{\delta(q_i,\eps)}{2} \right).
\]
where $B(q, \delta)$ denotes an open ball centered at $q$ and having a radius $\delta$. Hence, for any $q\in S_+$, there is $i\in\{1,\ldots,k\}$ such that $\|q_i - q\| < \delta(q_i,\eps/2)$ and consequently, 
letting 
\[
N_\eps = \max_{1\leq i \leq k}N(q_i,\eps) \quad \text{and} \quad \delta_\eps = \min_{1\leq i\leq k} \delta(q_i,\eps)/2, 
\]
for any  and $y\in E$ with $\|y\| \geq N_\eps$ and $\|q - y/\|y\|\| < \delta_\eps$, one gets $\|q_i - y/\|y\|\| < \delta(q_i,\eps)$, and consequently 
\begin{align*}
\frac{1}{\|y\|}\log \P_0\bigl(Z(t) = y \; \text{for some $t\in\N$}\bigr) &\geq~ - \alpha(q_i) \cdot q_i - \eps\\
&\geq - \alpha(q_i) \cdot q  - \|\alpha(q_i) \| \delta(q_i,\eps) - \eps.
\end{align*} 
Since by \eqref{LD_e3}, $\|\alpha(q_i) \| \delta(q_i,\eps)  \leq \eps$, 
and according to the definition of the mapping $q\to\alpha(q)$,
\[
\alpha(q_i)\cdot q ~\leq~\sup_{\alpha\in\partial D} \alpha\cdot q ~=~\alpha(q)\cdot q, 
\]
we conclude therefore that for any $\eps > 0$ there are $N_\eps > 0$ and $\delta_\eps > 0$ such that for any $y\in E$, 
\[
\frac{1}{\|y\|}\log \P_0\bigl(Z(t) = y \; \text{for some $t\in\N$}\bigr) ~\geq -\alpha(q)\cdot q - 2\eps 
\]
whenever $\|y\| \geq N_\eps$ and $\|q - y/\|y\|\| < \delta_\eps$. Lemma~\ref{LD_lem} is therefore proved. 
\end{proof} 

Recall that the function $q\to \alpha(q)$  was defined on the unit sphere $S$. We extend this function on $\R^d$ by letting $\alpha(0)=0$ and $\alpha(x)= \|x\| \alpha(x/\|x\|)$ for $x\not=0$. 

\begin{lemma}\label{dominated_function} Under the hypotheses (B0)-(B3),  for $\delta > 0$ small enough, 
\[
\sum_{y\in \Z^d} \mu(y) \exp\bigl(\alpha(x+y)\!\cdot\!(x+y) + \delta\|x+y\|\bigr) ~<~+\infty, \quad \forall x\in\Z^d. 
\]
\end{lemma}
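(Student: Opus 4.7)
The proof reduces to a covering argument on the unit sphere, followed by an appeal to (B2). The key observation is that, for $z = x+y \neq 0$ with $q = z/\|z\|$,
\[
\alpha(x+y)\cdot(x+y) + \delta\|x+y\| \;=\; \|z\|\bigl(\alpha(q)\cdot q + \delta\bigr) \;=\; \bigl(\alpha(q) + \delta q\bigr)\cdot z,
\]
and that $\alpha(q) + \delta q$ lies at distance at most $\delta$ from $\alpha(q)\in\partial D$, hence in the $\delta$-neighborhood of $D$. If this ``exponential tilt'' did not depend on $y$, finiteness would follow at once, since for any $\beta$ in the neighborhood of $D$ provided by (B2), $\sum_y \mu(y) e^{\beta\cdot(x+y)} = e^{\beta\cdot x}\,R(\beta) < +\infty$.

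The obstruction is that the tilt $\beta = \alpha(q)+\delta q$ varies with $y$ through the direction $q=(x+y)/\|x+y\|$. To decouple this dependence I would first fix a finite open cover $S\subset\bigcup_{i=1}^N B(q_i,\epsilon)$ of the compact sphere, and split the sum according to which cone $\{z : z/\|z\|\in B(q_i,\epsilon)\}$ the vector $x+y$ belongs to. Using the continuity of the homeomorphism $q\mapsto\alpha(q)$ on $S$, on the $i$-th cone one obtains
\[
\bigl(\alpha(q) + \delta q\bigr)\cdot z \;\leq\; \bigl(\alpha(q_i) + \delta q_i\bigr)\cdot z + r\,\|z\|,
\]
with $r = \omega(\epsilon) + \delta\epsilon$ and $\omega$ a modulus of continuity of $\alpha(\cdot)$ on $S$, so that $r\to 0$ as $\epsilon,\delta\to 0$ independently of $i$.

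To absorb the residual factor $e^{r\|z\|}$ I would use the elementary majorization: one can pick finitely many unit vectors $p_1,\ldots,p_M$ such that every $z\neq 0$ satisfies $p_j\cdot z/\|z\|\geq 1/2$ for at least one index $j$; then $e^{r\|z\|}\leq\sum_{j=1}^M e^{2r\,p_j\cdot z}$. Combining the two covering steps, the full sum is dominated by
\[
\sum_{i=1}^N\sum_{j=1}^M e^{\beta_{ij}\cdot x}\,R(\beta_{ij}), \qquad \beta_{ij} = \alpha(q_i) + \delta q_i + 2r\,p_j,
\]
plus the single harmless term at $y=-x$. Since $\alpha(q_i)\in D$ and $\|\beta_{ij}-\alpha(q_i)\|\leq \delta + 2r$, choosing first $\epsilon$ small and then $\delta$ small enough places every $\beta_{ij}$ inside the open neighborhood of $D$ on which (B2) guarantees $R<\infty$; the finitely many $R(\beta_{ij})$ are then finite and the lemma follows.

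The main obstacle is the first covering step: one must extract a \emph{uniform} modulus of continuity for $\alpha(\cdot)$ on $S$ (available since $S$ is compact and $\alpha$ is continuous) and verify that the error rate $\omega(\epsilon)+\delta\epsilon$ can be made arbitrarily small while keeping all tilts $\beta_{ij}$ inside the domain where $R$ is finite. Once this uniform control on the sphere is in place, the remainder is a routine bookkeeping argument using (B2).
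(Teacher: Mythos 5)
Your proof is correct and follows essentially the same route as the paper's: a finite cover of the compact sphere together with the uniform continuity of $q\mapsto\alpha(q)$ reduces the direction-dependent tilt to finitely many fixed tilts lying in the neighborhood of $D$ on which (B2) makes $R$ finite. The only cosmetic differences are that the paper first uses the subadditivity $\alpha(x+y)\cdot(x+y)\le\alpha(x)\cdot x+\alpha(y)\cdot y$ to reduce to the case $x=0$, and it absorbs the covering error directly into the $\delta\|y\|$ slack rather than via your second covering by the unit vectors $p_j$ (which in fact makes explicit a step the paper glosses over).
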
 
\begin{proof} Remark that for any $x,y\in\R^d$, according to the definition of the mapping $q\to\alpha(q)$, 
\[
\alpha(x+y) \cdot (x+y) ~=~ \sup_{\alpha\in D} \alpha\cdot (x+y) ~\leq~ \sup_{\alpha\in D} \alpha\cdot x + \sup_{\alpha\in D} \alpha\cdot y ~=~ \alpha(x)\cdot x + \alpha(y)\cdot y 
\]
and consequently, for any $x\in E$, 
\[
\sum_{y\in \Z^d} \mu(y) \exp\bigl(\alpha(x+y)\!\cdot\!(x+y) + \delta\|x+y\|\bigr) ~\leq~ C(x) \sum_{y\in \Z^d} \mu(y) \exp\bigl(\alpha(y)\cdot y + \delta\|y\|\bigr) 
\]
with 
\[
C(x) ~=~ \exp\bigl(\alpha(x)\cdot x + \delta\|x\|\bigr).
\]
To prove Lemma~\ref{dominated_function}, it is therefore sufficient to prove that for $\delta > 0$ small enough, 
\[
\sum_{y\in \Z^d} \mu(y) \exp\bigl(\alpha(y)\cdot y + \delta\|y\|\bigr) < + \infty. 
\]
Furthermore, recall that because of the assumption (B2), the step generating function 
\[
R(\alpha) = \sum_{y\in\Z^d} \mu(y) \exp(\alpha\cdot y) 
\]
is finite in a neighborhood of the set $D=\{\alpha :~R(\alpha) \leq 1\}$. Hence, for any $\alpha\in \partial D$, there is $\delta(\alpha) > 0$ such that
\[
\sum_{y\in\Z^d} \mu(y) \exp(\alpha\cdot y + \delta(\alpha) \|y\|) ~<~+\infty.
\]
The set $\partial D$ being compact, there is a finite subset $\{\alpha_1,\ldots,\alpha_k\}\subset\partial D$ such that 
\[
\partial D \subset \bigcup_{i=1}^k B(\alpha_i, \delta(\alpha_i)/2). 
\]
Letting 
\[
\delta_0 = \min_{1\leq i \leq k} \delta(\alpha_i),
\]
we conclude therefore that for any $\alpha\in\partial D$, there is $i\in\{1,\ldots,k\}$ such that 
\[
\|\alpha - \alpha_i\| < \delta(\alpha_i)/2
\]
and consequently, 
\begin{align*}
\sum_{y\in\Z^d} \mu(y) \exp(\alpha\cdot y + \delta_0 \|y\|) &\leq~ \sum_{y\in\Z^d} \mu(y) \exp\bigl(\alpha_i\cdot y + \bigl(\delta_0 + \delta(\alpha_i)/2\bigr)\|y\|\bigr) \\
&\leq~ \sum_{y\in\Z^d} \mu(y) \exp\bigl(\alpha_i\cdot y +  \delta(\alpha_i)\|y\|\bigr) ~<~+\infty. 
\end{align*} 
There is therefore $\delta_0 > 0$ such that 
\be\label{dominated_function_eq}
\sum_{y\in\Z^d} \mu(y) \exp(\alpha\cdot y + \delta_0 \|y\|) ~<~+\infty, \quad \forall \alpha\in\partial D. 
\ee
Furthermore, recall that under our assumptions, the function $q\to \alpha(q)$ is continuous on the unit sphere $S$. The unit sphere $S$ being compact,  the function $q\to\alpha(q)$ is therefore uniformly continuous on $S$ and consequently, there is $\sigma > 0$ such that for any $q,q'\in S$, 
\[
\|\alpha(q)-\alpha(q')\| < \delta_0/2  \quad \text{whenever} \quad \|q-q'\| < \sigma.
\]
Moreover, there is a finite subset $\{q_1,\ldots,q_k\}\subset S$ such that 
\[
S \subset \bigcup_{j=1}^k B(q_j, \sigma). 
\]
For any non-zero $y\in\R^d$, there is therefore $j\in\{1,\ldots, k\}$ such that $\| q_i - y/\|y\| \| < \sigma$ and 
\[
 \|\alpha(y) - \alpha(q)\|  = \|\alpha(y/\|y\|) - \alpha(q)\| < \delta_0/2.
\]
The last inequality shows that 
\[
\alpha(y)\cdot y - \alpha(q_j)\cdot y ~<~ \delta_0\|y\|/2,
\]
and consequently, 
\[
\exp(\alpha(y)\cdot y + \delta_0 \|y\|/2) ~\leq~ \sum_{j=1}^k \exp(\alpha(q_j)\cdot y + \delta_0 \|y\|).
\]
When combined with \eqref{dominated_function_eq}, this relation prove that 
\[
\sum_{y\in\Z^d} \mu(y) \exp(\alpha(y)\cdot y + \delta_0 \|y\|/2) ~\leq~ \sum_{j=1}^k\sum_{y\in\Z^d} \mu(y) \exp(\alpha(q_j)\cdot y + \delta_0 \|y\|) ~<~+\infty. 
\]
\end{proof} 

Now we are ready to prove the following statement. 
\begin{lemma}\label{harmonic_limits}  If a sequence $(y_n)\in {E}$ converges in the Martin compactification $E_M$ of $E$ to some point $\eta\in\partial_M E$, then the limit function 
\[
K(z,\eta) ~=~ \lim_n K(z,y_n), \quad z\in E
\]
is harmonic for $(Z(t))$. 
\end{lemma}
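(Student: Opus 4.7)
The plan is to combine the algebraic identity $(\Id-P)G(\cdot,y_n)=\delta_{y_n}$, which holds for the Green function of any sub-stochastic Markov chain, with a dominated-convergence argument driven by the large-deviation machinery proved above. Dividing by $G(0,y_n)$ gives, for every $z\in E$ and every $n$,
\be\label{plan_eq1}
K(z,y_n)~-~\sum_{z'\in E}p(z,z')K(z',y_n)~=~\delta_{y_n}(z)/G(0,y_n).
\ee
Since $y_n\to\eta\in\partial_M E$ in the Martin compactification, the sequence $(y_n)$ leaves every finite subset of $E$, so for each fixed $z$ the Kronecker term on the right of \eqref{plan_eq1} vanishes for all $n$ large enough, giving $K(z,y_n)=\sum_{z'}p(z,z')K(z',y_n)$ eventually. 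Fatou's lemma applied to the right-hand side, together with the pointwise convergence $K(z',y_n)\to K(z',\eta)$, immediately yields $K(z,\eta)\ge PK(z,\eta)$, so $K(\cdot,\eta)$ is super-harmonic for $(Z(t))$.

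The heart of the argument is the reverse inequality, i.e.\ justifying the interchange of limit and sum on the right-hand side of \eqref{plan_eq1}. By compactness of $S_+$ I would first pass to a subsequence along which $y_n/\|y_n\|\to q\in S_+$. For any $\alpha\in\partial D$ one has $R(\alpha)=1$, so $e^{\alpha\cdot X(t)}$ is a martingale under the law of the unkilled walk; optional stopping at the first hitting time of $y_n$ then yields the Chebyshev-type bound $\P_{z'}(Z\text{ hits }y_n)\le\exp(-\alpha\cdot(y_n-z'))$ for every $\alpha\in\partial D$. Writing $K(z',y_n)=\P_{z'}(\text{hit }y_n)/\P_0(\text{hit }y_n)$ and combining this with the uniform lower bound of Lemma~\ref{LD_lem} gives, for any $\varepsilon>0$ and $n$ sufficiently large, a pointwise estimate of the form
\[
K(z',y_n)~\le~ C_\varepsilon\,\exp\bigl(\alpha(q)\cdot z'\bigr)\,\exp\bigl(\varepsilon\|y_n\|+o(\|y_n\|)\bigr).
\]

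The main obstacle is the factor $\exp(\varepsilon\|y_n\|)$, which prevents a uniform-in-$n$ pointwise dominating function. I would circumvent this by a tightness argument rather than strict domination: split $\sum_{z'}p(z,z')K(z',y_n)$ at $\|z'\|\le R$ and $\|z'\|>R$. The near-field sum is finite and converges term-by-term to the corresponding partial sum of $PK(z,\eta)$. For the far-field tail, Lemma~\ref{dominated_function} furnishes $\delta>0$ with $\sum_{z'}p(z,z')\exp(\alpha(z')\cdot z'+\delta\|z'\|)<+\infty$; choosing $\alpha$ in the martingale bound optimally as $\alpha((y_n-z')/\|y_n-z'\|)$ rather than $\alpha(q)$ and using the sublinearity of the support function of $D$ lets one redistribute the slack $\varepsilon\|y_n\|+o(\|y_n\|)$ onto a tail excess of the form $\exp(c\|z'\|)$ with $c<\delta$, which is then absorbed by the summability. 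The tail is consequently $<\rho$ uniformly in large $n$ for $R=R(\rho)$ large, and sending $R\to\infty$, $\rho\to 0$ closes the interchange and gives $K(z,\eta)=PK(z,\eta)$, proving harmonicity.
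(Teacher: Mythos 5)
Your starting identity $K(z,y_n)=\sum_{z'}p(z,z')K(z',y_n)+\delta_{y_n}(z)/G(0,y_n)$ and the Fatou step giving super-harmonicity are fine, but the far-field tail estimate --- the step you yourself call the heart of the argument --- does not close. The bound you derive has the form $K(z',y_n)\le \exp(\alpha(z')\cdot z')\exp(\eps\|y_n\|+o(\|y_n\|))$, and the excess factor $\exp(\eps\|y_n\|)$ is independent of $z'$ and unbounded in $n$: it comes from the denominator $\P_0(Z(t)=y_n\text{ for some }t)$, for which Lemma~\ref{LD_lem} only provides a lower bound with an $\exp(-\eps\|y_n\|)$ slack. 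Choosing $\alpha=\alpha\bigl((y_n-z')/\|y_n-z'\|\bigr)$ in the martingale bound and invoking sublinearity of the support function of $D$ improves only the $z'$-dependence, to $\exp(\alpha(z')\cdot z')$; it cannot convert an $n$-dependent, $z'$-independent slack into a factor $\exp(c\|z'\|)$. Hence for fixed $R$ the tail $\sum_{\|z'\|>R}p(z,z')K(z',y_n)$ is controlled only by $\exp(\eps\|y_n\|)\sum_{\|z'\|>R}p(z,z')\exp(\alpha(z')\cdot z'+o(\|z'\|))$, which is not uniformly small in $n$ --- it is not even bounded. More fundamentally, logarithmic-order large deviation estimates in the variable $y_n$ can never compare $G(z',y_n)$ with $G(0,y_n)$ up to multiplicative constants, which is what a dominating function requires.

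The repair is to apply the large deviation bound in the variable $z'$ rather than in $y_n$. The strong Markov property gives the exact inequality $G(0,y_n)\ge \P_0\bigl(Z(t)=z'\text{ for some }t\bigr)\,G(z',y_n)$, hence $K(z',y_n)\le 1/\P_0\bigl(Z(t)=z'\text{ for some }t\bigr)$ for every $n$ simultaneously. The uniform-in-direction lower bound of Lemma~\ref{LD_lem}, applied with $z'$ in the role of the target point, then yields $K(z',y_n)\le C\exp\bigl(\alpha(z')\cdot z'+\delta\|z'\|\bigr)$ with $C$ independent of $n$, and this is precisely the dominating function that Lemma~\ref{dominated_function} makes summable against $p(z,\cdot)$. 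Dominated convergence in $\E_z\bigl(K(Z(1),y_n)\bigr)=K(z,y_n)$, valid for $n$ large since $(y_n)$ leaves every finite set, then gives harmonicity directly, with no need for the separate super-harmonicity step. This is the route the paper takes.
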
 
\begin{proof} By Lemma~\ref{LD_estimates_pr} for any $\delta > 0$ there is $N>0$ such that for any $x\in E$ with $\|x\| \geq N$, 
\[
\P_0\bigl(Z(t) = y_n \; \text{for some $t\in\N$}\bigr)  \geq \exp( - a(x)\cdot x  - \delta \|x\|).
\]
Since for any $x,y\in E$,
\[
\P_0\bigl(Z(t) = y_n \; \text{for some $t\in\N$}\bigr)  G(x,y) ~\leq~G(0,y) 
\]
this proves that for any $\delta > 0$ there is $C>0$ such that 
\[
K(x,y_n) ~=~ \frac{G(x,y_n)}{G(0,y_n)} ~\leq~C \exp( a(x)\cdot x  + \delta \|x\|), \quad \forall x\in E, \; n\in\N. 
\]
Remark now that for any $n\in\N$ and $x\in E\setminus\{y_n\}$, 
\[
\E_x\bigl( K(Z(1),y_n)\bigr) = K(x,y_n), 
\]
and recall that by Lemma~\ref{dominated_function}, the random variable  $\exp( a(Z(1))\cdot Z(1)  + \delta \|Z(1)\|)$ is $P_x$ integrable for $\delta > 0$ small enough.  Hence, using  dominated convergence theorem we conclude  that 
\[
\E_x\bigl( K(Z(1),\eta)\bigr) = K(x,\eta).
\]
\end{proof}

\subsection{Proof of Theorem~\ref{theorem_4}.} Now we are ready to complete the proof of Theorem~\ref{theorem_4} and we begin our analysis with a particular case when the mean step of the homogeneous random walk 
\[
m = \sum_{x\in\Z^d} x\mu(x) 
\]
belongs to the cone ${\cal C}$, and $q = m/\|m\| \in S_+^\infty$. Recall that in this case, 
\[
\alpha(q) ~=~\alpha(m) ~=~ 0
\]
because $q(0) = \nabla R(0)/\|\nabla R(0)\| = m/\|m\|$ and according to the definition of $S_+^\infty$, 
\[
\E_\cdot(\tau) = +\infty. 
\]
Let a sequence of points $(y_n)\in E^\N$ be such that  $\lim_n \|y_n\| = +\infty$ and $\lim_n y_n/\|y_n\| = m/\|m\|$. If a subsequence $(y_{n_k})$ converges in the Martin compactification to some point $\eta$, then by Lemma~\ref{harmonic_limits}, the limit function 
\[
K(x,\eta) ~=~ \lim_{k\to\infty} K(x,y_{n_k}), \quad x\in E,
\]
is harmonic for $(Z(t))$. Moreover,   by Proposition~\ref{LD_estimates_pr}, 
\[
\liminf_{n\to\infty} \frac{1}{\|y_n\|}\log G(0,y_n) ~\geq~ 0 \quad \forall x\in{E}.
\]
and consequently, by Corollary~\ref{main_result_cor2} and according to the definition of the functions $k_q(\cdot)$, 
\[
K(x,\eta) = V(x) = \exp(-\alpha(m/\|m\|)\cdot x) k_{m/\|m\|}(x) = k_{m/\|m\|}(x), \quad \forall x\in E. 
\]
Since the limit function $K(\cdot,\eta)$ does not depend on the convergent subsequence $(y_{n_k})$, this implies that 
\[
\lim_{n\to\infty} K(x,y_n) = k_{m/\|m\|}(x), \quad \forall x\in E.
\]
For $q= m/\|m\| \in S_+^\infty$,  the function $k_{m/\|m\|}$ is  therefore harmonic fo $(Z(t))$ and \eqref{theorem_4_e1} holds.

To extend this result  for an arbitrary $q\in S_+^\infty$ we use classical Cramer's transform. Remark that under the hypotheses (B1)-(B3), for any $\alpha\in \partial_+D$, the twisted random walks $(X_\alpha(t))$ and  $(Z_\alpha(t))$  satisfy the  conditions similar to (B1)-(B3)~: 
\begin{enumerate}
\item[(B1)]  The random walk $(Z_\alpha(n))$ is transient  on $E~=~ {\cal C}\cap\Z^d$ and satisfies the following communication condition~:  there are $\kappa_0 >0$ and a finite set ${\cal E}_0 \subset \Z^d$ such that 
\begin{itemize}
\item[(a)] $\mu_\alpha(x) \dot= \exp(\alpha\cdot x) \mu(x) > 0$ for all $x\in{\cal E}_0$; 
\item[(b)] for any $x\not= y$, $x,y\in E$ there exists a sequence $x_0, x_1, \ldots , x_n\in E$ with $x_0=x$, $x_n=y$ and $n\leq \kappa_0 |y-x|$ such that $x_j-x_{j-1}\in {\cal E}_0$ for all $j\in\{1,\ldots,n\}$. 
\end{itemize} 
\item[(B2)] the step generating function of the homogeneous random walk $(X_\alpha(t))$
\[
R_\alpha(\beta) \dot= \sum_{x\in \Z^d} \exp(\beta\cdot x) \mu_\alpha(x) ~=~ R(\alpha + \beta) 
\]
is finite in a neighborhood of the set $D_\alpha~=~\{\beta\in\R^d~:~ R_\alpha(\beta) \leq 1\} ~=~ D - \alpha$;
\item[(B3)] the mean step $m_\alpha = \E_0(X_\alpha(1))$ of the homogeneous random walk $(X_\alpha(t))$ is non-zero~: 
\[
m_\alpha = \sum_{x\in\Z^d} x \exp(\alpha\cdot(y-x))\mu(y-x) ~=~ \nabla R(\alpha) ~=~q(\alpha) \|\nabla R(\alpha)\| \not= 0.
\]
\end{enumerate} 
Moreover, 
\[
 q = \nabla R(\alpha(q))/\|\nabla R(\alpha(q))\| = m_{\alpha(q)}/\|m_{\alpha(q)}\| 
\]
and hence for any $q\in S_+^\infty$, the above arguments applied fo the twisted random walk $(Z_{\alpha(q)}(t))$ prove that for any sequence of points $(y_n)\in E^\N$ with $\lim_n \|y_n\| = +\infty$ and $\lim_n y_n/\|y_n\| = q$, the sequence of functions 
\[
K_{\alpha(q)}(\cdot,y_n) = \frac{G_{\alpha(q)}(\cdot,y_n)}{G_{\alpha(q)}(0,y_n)}
\]
where $G_{\alpha(q)}(x,y)$ denotes the Green function of $(Z_{\alpha(q)}(t))$, converges point-wise to the renewal function $V_{\alpha(q)}(x)$ of the corresponding ladder height process $(H_{\alpha(q)}(n))$.  Since for any $x,y\in E$,  
\begin{align*}
G_{\alpha(q)}(x,y) &=~  \sum_{t=0}^\infty  \P_x(Z_{\alpha(q)}(t) = y)  = \sum_{t=0}^\infty \exp(\alpha(q)\cdot(y-x)) \P_x(Z(t) = y) \\ &=~ \exp(\alpha(q)\cdot(y-x))G(x,y)
\end{align*} 
we conclude therefore that the sequence of functions 
\begin{align*}
K(x,y_n) &= \frac{G(x,y_n)}{G(0,y_n)} = \exp(\alpha(q)\cdot x) \frac{G_{\alpha(q)}(x,y_n)}{G_{\alpha(q)}(0,y_n)} ~=~ \exp(\alpha(q)\cdot x) K_{\alpha(q)}(x,y_n)
\end{align*} 
converge point-wise to the function 
\[
k_q(x) = \exp(\alpha(q)\cdot x)  V_{\alpha(q)}(x), \quad x\in E,  
\]
and by Lemma~\ref{harmonic_limits},  the limit function $k_q$ is harmonic for $(Z(t))$. Theorem~\ref{theorem_4} is therefore proved.

\section{Proof of Theorem~\ref{main_result_example_th1}.}\label{th1_example_proof} 

\subsection{Main ideas of the proof.} Before proving  Theorem~\ref{main_result_example_th1} in a general case, let us notice that in a particular case,  when the boundary of the cone ${\cal C}$ is a hyperplane in $\R^d$, i.e. when for some non-zero vector $\gamma\in\R^d$, 
\[
{\cal C} = \{x\in\R^d :~x\cdot \gamma \geq 0\},
\]
and in particular when $d=1$ and ${\cal C} = [0,+\infty[$, this is a simple consequence of classical results concerning one dimensional random walks. Indeed, in this case, $X(t)\cdot \gamma$ is a random walk in $\R$ with the mean 
$m\cdot \gamma$, and $\tau=\inf\{t\geq 0:~X(t)\not\in{\cal C}\}$ is the first time when the random walk $X(t)\cdot \gamma$ become negative. If $m\in{\cal C} = \{x\in\R^d :~x\cdot \gamma \geq 0\}$, then clearly $m\cdot \gamma \geq 0$, and consequently,  the stopping time $\tau$ is not integrable (see for instance the books of Spitzer~\cite{Spitzer} and Feller~\cite{Feller}).

Remark moreover that in the case when the mean step $m$ belongs to the interior of the cone ${\cal C}$, by the strong law of large numbers $\P_x(\tau = +\infty) > 0$ and consequently $\E_x(\tau) = +\infty$ for some $x\in E = {\cal C}\cap\Z^d$ (see for instance, the proof of Lemma 3.7 in the paper of Duraj~\cite{JDuraj}).  Under the hypotheses (B1), from this it follows that, $\P_x(\tau = +\infty) > 0$ and $\E_x(\tau) = +\infty$  for all  $x\in E = {\cal C}\cap\Z^d$. Hence, to prove Theorem~\ref{main_result_example_th1} it is sufficient to consider the case when the mean step $m$ belongs to the boundary $\partial{\cal C}$ of the cone ${\cal C}$.  

Remark finally that because of Assumption~(B1), the random walk $(X(t))$ is irreducible in $\Z^d$, and consequently, the covariance matrix $\Gamma$ of the steps of the random walk $(X(t))$ is non degenerate~:~for any $u\in\R^d$, 
\[
u \cdot \Gamma u ~=~ \E\left( \bigl((X(1)-m)\cdot u\bigr)^2\right) ~>~0,
\]
This proves that there is an invertible  matrix $M$ for which the steps of the random walk $(\hat{X}(t) = MX(t))$ in the lattice $M\Z^d$ have the identity covariance matrix~:
\[
 \E\left( \bigl((X(1)-m)\cdot u\bigr)^2\right)  ~=~ u\cdot u, \quad \forall u\in\R^d
\]
and the mean 
\[
\hat{m} = M m. 
\]
To prove Theorem~\ref{main_result_example_th1} it is  therefore sufficient to show that the first time $\hat\tau$ when the random walk $(\hat{X}(t))$ exits form the cone $\hat{C} ~=~ M{\cal C}$ is non integrable 
whenever $\hat{m} \in \partial \hat{\cal C}$. 
\medskip 

From now on we assume that $d\geq 2$ and that $m\in\partial{\cal C}$, or equivalently, that $\hat{m} \in\partial\hat{\cal C}$.

Denote by $\hat\Pi$ the hyperplane of $\R^d$ which is orthogonal to the vector $\hat{m}$ :
\[
\hat\Pi = \{x\in\R^d :~x\cdot \hat{m} = 0\}
\]
and let 
\[
{\cal C}_\eps ~=~ \{x\in\R^d~:~ 0 \leq \angle(x,\hat{m}) < \eps\} 
\]
where $\angle(x,\hat{m})$ denotes the angle between the vectors $x$ and $\hat{m}$. 
For $x\in\R^d$,  $Pr(x)$ will denote the orthogonal projection of $x$ on $\hat\Pi$.

The projection $\hat{S}(t) = Pr(\hat{Z}(t))$ onto $\hat\Pi$ of the random walk $(\hat{Z}(t))$ is then a $d-1$ dimensional centered random walk satisfying the hypotheses of Denisov and Wachtel~\cite{Denisov-Wachtel:2}. Using the results of this paper, for any convex cone ${\cal K}\subset \hat{\Pi}$,  on can get  the exact asymptotic of of the tail probability for the first time 
when the random walk $(\hat{Z}(t))$ exits from the cone 
\[
\{x\in\R^d~:~Pr(x)\in {\cal K}\}.
\]
To prove Theorem~\ref{main_result_example_th1}, we will construct a convex cone $\hat{\cal K}\subset \hat{\Pi}$ such that 
\begin{itemize}
\item[--] for any $\eps > 0$, the first time when the random walk  $(\hat{Z}(t))$ exits from the cone 
\[
\{x\in\R^d~:~Pr(x)\in \hat{\cal K}\}\cap {\cal C}_\eps 
\]
is non-integrable, and 
\item[--] for $\eps > 0$ small enough, $\{x\in\R^d~:~Pr(x)\in \hat{\cal K}\}\cap {\cal C}_\eps \subset \hat{\cal C}$. 
\end{itemize} 
Before proving our theorem, we recall the results of  Denisov and Wachtel~\cite{Denisov-Wachtel:2} that we need for our proof.

\subsection{Existing preliminary results for centered random walk.} 
Let ${\cal K}$ denote an open convex cone in $\R^k$ with a vertex at the origin $0\in\R^k$.  Consider a random walk $(S(t))$  in $\R^k$ with steps $\xi_i, \; i\in\N$~: 
\[
S(t) = \sum_{i=1}^t \xi_i, \quad t\in\N,
\]
where  $\xi_i, \; i\in\N$,  are centered, independent and identically distributed  random variables valued in $\R^k$, and let $\tau_{\cal K}$ denote the first time when the random walk $(S(t))$ exists from the set  ${\cal K}$~: 
\[
\tau_{\cal K} ~=~ \inf\{t > 0~: S(t)\not\in {\cal K} \}.
\]
In their paper~\cite{Denisov-Wachtel:2}, Denisov and Wachtel assumed that 

\noindent
{\bf Assumption (C1)} the cone ${\cal K}$ is either convex or star-like and $C^2$;

\noindent
{\bf Assumption (C2)} the random vectors $\xi_i$ are centered ( $\E(\xi_i) = 0$)  and reduced (e.i. that the random vector $\xi_i$ has the identity covariance matrix); 

\noindent
{\bf Assumption (C3)} $\E(\|\xi_i\|^{p}) < +\infty$ with $p=p^*$ if $p^* > 2$ and with some $p > p^*$ if $p^*\leq 2$.

\medskip 
\noindent
Under these hypotheses,  for a copy of the  random walk $(S(t))$ killed upon the time $\tau_{\cal K}$, Denisov and Wachtel constructed a nontrivial harmonic function ${\cal V} : \R^k\to\R_+$, 
such that 
\[
{\cal V}(x) = 0 \quad \quad \forall x\in\R^k\setminus{\cal K}
\]
and 
\[
{\cal V}(x) ~\leq~ C(\|x\|^{p^*} + 1), \quad \quad \forall x\in {\cal K}, 
\]
and proved that for any $x\in{\cal K}$, 
\be\label{e0}
\lim_{t\to+\infty} t^{p^*/2} \P_x(\tau_{\cal K} > t) ~=~ \kappa {\cal V}(x)
\ee
with some absolute constants  $\kappa > 0$  and $C> 0$ (see Theorem~1 and Lemma~14 in \cite{Denisov-Wachtel:2}). The function ${\cal V}$ and the constant  $p^* > 0$ were defined in terms of the  minimal (up to a constant) and strictly positive on ${\cal K}$ solution of the boundary problem~: 
\[
\Delta u(x) = 0, \; x\in {\cal K} \quad \text{and} \quad \left. u\right|_{\partial K} ~=~0.
\]
If $k=1$ then there is only one non-trivial cone ${\cal K} = ]0,+\infty[$ and in this case $u(x) = x$ for all $x \geq 0$, and $p^*=1$. If $k \geq 2$, the number $p^*$  can be found as follows~: 

Let $L$ be the Laplace-Beltrami operator  on the unit sphere ${\mathbb S}^{k-1}$ in $\R^{k}$. If $\Sigma = {\cal K}\cap\mathbb{S}^{k-1}$ is regular with respect to $L$,  then there exists a complete set of orthogonal eigenfunctions $w_j$ of $L$  satisfying 
\[
L w_i(x) = - \lambda_j w_j(x), \quad \quad x\in \Sigma,
\]
and
\[
w_j(x) = 0, \quad x\in\partial\Sigma,
\]
such that 
\[
0< \lambda_1 < \lambda_2 < \ldots
\]
and 
\[
u(x) ~=~ \|x\|^{p^*} w_1\left(\frac{x}{\|x\|}\right) 
\]  
with 
\[
p^* ~=~\sqrt{\lambda_1 + (k/2)^2} - (k/2 - 1).
\]

\medskip 
\noindent
Remark moreover that in a particular case, when the cone ${\cal K}$ is convex, from \eqref{e0} it follows 

\begin{prop}\label{D-W_cor} If the conditions (C2) and (C3) are satisfied and the cone ${\cal K}$ is convex, then there is $x_0\in{\cal K}$ such that 
\[
\inf_{y\in x_0+{\cal K}} {\cal V}(y) \geq {\cal V}(x_0) ~>~0. 
\]
\end{prop}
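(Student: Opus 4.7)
The plan is to establish a monotonicity property for the Denisov--Wachtel harmonic function ${\cal V}$ along the convex cone ${\cal K}$, namely
\[
{\cal V}(x + u) ~\geq~ {\cal V}(x), \quad \forall\, x \in {\cal K}, \, u \in {\cal K},
\]
and then to choose any $x_0 \in {\cal K}$, where strict positivity of ${\cal V}$ on ${\cal K}$ gives ${\cal V}(x_0) > 0$ and consequently ${\cal V}(y) \geq {\cal V}(x_0) > 0$ for every $y \in x_0 + {\cal K}$.

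The monotonicity follows from a pathwise comparison that exploits the convexity of ${\cal K}$. Since ${\cal K}$ is a convex cone with vertex at the origin, it is closed under addition: ${\cal K} + {\cal K} \subseteq {\cal K}$. Fix $u \in {\cal K}$ and consider the random walk $(S(t))$ driven by a common realization of the steps $\xi_1,\xi_2,\ldots$. On the event $\{x + S(s) \in {\cal K} \text{ for all } 0 \leq s \leq t\}$ the convex cone property gives
\[
(x + u) + S(s) ~=~ (x + S(s)) + u ~\in~ {\cal K} + {\cal K} ~\subseteq~ {\cal K}, \quad 0 \leq s \leq t,
\]
so that on the same event $\tau_{\cal K} > t$ also holds under $\P_{x+u}$. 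Consequently
\[
\P_{x+u}(\tau_{\cal K} > t) ~\geq~ \P_x(\tau_{\cal K} > t), \quad \forall\, t \geq 0.
\]
Multiplying both sides by $t^{p^*/2}$ and letting $t \to \infty$, the Denisov--Wachtel asymptotic \eqref{e0} (with the same absolute constant $\kappa$ on both sides) yields $\kappa {\cal V}(x + u) \geq \kappa {\cal V}(x)$, which is exactly the desired monotonicity.

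To conclude, recall from the construction recalled just before the statement that ${\cal V}$ arises from the minimal (up to a positive constant) strictly positive solution of the Dirichlet problem for $\Delta$ on ${\cal K}$, and hence satisfies ${\cal V}(x) > 0$ for every $x \in {\cal K}$. Picking any $x_0 \in {\cal K}$ one then has ${\cal V}(x_0) > 0$, and for every $y = x_0 + u$ with $u \in {\cal K}$ the monotonicity gives ${\cal V}(y) \geq {\cal V}(x_0)$, proving the claim. I do not expect a real obstacle here: the only ingredient beyond \eqref{e0} is the elementary identity ${\cal K} + {\cal K} \subseteq {\cal K}$, which fails for star-like (but non-convex) cones — that is precisely why convexity of ${\cal K}$ is the key assumption of the proposition and why it is stated separately from the main convex-cone/$C^2$ dichotomy of hypothesis (C1).
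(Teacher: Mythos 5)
Your proof is correct and follows essentially the same route as the paper: both arguments use ${\cal K}+{\cal K}\subseteq{\cal K}$ to deduce $\P_{x_0+u}(\tau_{\cal K}>t)\geq\P_{x_0}(\tau_{\cal K}>t)$ (the paper via $\tau_{\cal K}\geq\tau_{u+{\cal K}}$ plus spatial homogeneity, you via a pathwise coupling — the same idea) and then pass to the limit in the tail asymptotic \eqref{e0}. The only quibble is your final appeal to strict positivity of ${\cal V}$ \emph{everywhere} on ${\cal K}$, which is stronger than what Denisov--Wachtel's construction guarantees for the discrete harmonic function (as opposed to the continuous solution $u$ of the Dirichlet problem) and is unnecessary: the paper simply picks $x_0$ with ${\cal V}(x_0)\neq 0$, which exists because ${\cal V}$ is non-trivial.
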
 
\begin{proof} Indeed, the function ${\cal V}:{\cal K}\to\R_+$ being non-trivial, there is $x_0\in{\cal K}$ such that ${\cal V}(x_0)\not=0$. If the cone ${\cal K}$ is convex, then for any $y\in{\cal K}$, $y+{\cal K}\subset {\cal K}$ and consequently,  $P_{x_0+y}$-a.s.
\[
\tau_{\cal K} \geq \tau_{y+{\cal K}},
\]
where $\tau_{y+{\cal K}} ~=~ \inf\{t > 0:~ S(t)\not\in y+{\cal K}\}$. 
Since by homogeneity, for any $t\in\N$ 
\[
\P_{x_0+y}(\tau_{y+{\cal K}}  > t) ~=~ \P_{x_0}(\tau_{\cal K} > t), 
\]
from this it follows that 
\[
\P_{x_0+y}(\tau_{\cal K}  > t) ~\geq~ \P_{x_0}(\tau_{\cal K} > t), \quad \forall y\in {\cal K},
\]
and hence, by \eqref{e0}, 
\[
{\cal V}(x) \geq {\cal V}(x_0) > 0, \quad \forall x\in x_0+{\cal K}. 
\]
\end{proof} 
We will use these results for a circular cone ${\cal K}(\theta,v) \dot= \{x\in\R^k ~:~ x\not=0, \; \angle(x, v) < \theta\}$
for some non zero vector $v\in\R^k$ and $0 < \theta <\pi$, where $\angle(x,v)$ denotes the angle between the vectors $x,v\in\R^k$. In this particular case, the function $w_1$ and the constant $p^*$ can be represented in the following way~(see Burkholder~\cite{Burkholder} p.193, and the references therein)~: 

\noindent 
1) If $k=2$ then 
\[
u(x) = \|x\|^{p^*}\cos\bigl(p^*\angle(x,v)\bigr) \quad \text{with} \quad p^* = \pi/(2\theta)
\]
2) If  $k > 2$, then   for  $p > 0$, 
\begin{itemize} 
\item[--] the hypergeometric function
\[
F(a,b,c,t) ~\dot=~ \sum_{j=0}^\infty \frac{(a)_j(b)_j}{(c)_j j!} t^j
\]
with $a=-p$, $b=p+k-2$, ans $c= (k-1)/2$, where $(a)_0= 1$, $(a)_1 = a$, $(a)_2 = a(a+1)$, \ldots, is well defined for $|t| < 1$;
\item[--] the function 
\[
\theta \to h(\theta) = F\Bigl(a,b,c, \bigl(1- \cos(\theta)\bigr)/2\Bigr)  
\] 
is well defined in $[0,\pi[$ with  $h(0)=1$ and has in the interval $[0,\pi[$ at least one zero. 
\end{itemize} 
Let $\theta_k(p)$ denote the smallest zero of $h$ in $[0,\pi[$. Then the mapping $p\to \theta_k(p)$ is continuous and strictly decreasing from $]0,+\infty[$ to $]0,\pi[$ with $\theta_k(1) ~=~\pi/2$, the inverse mapping $\theta \to p_k(\theta)$ is continuous and strictly decreasing from $]0,\pi[$ to $]0,+\infty[$ with $p_k(\pi/2) = 1$, and 
\[
u(x) = \|x\|^{p^*} h(\angle(x,v)), \quad x\in{\cal K}(\theta^*,v),  
\]
with $p^* = p_k(\theta^*)$. 
Since for any non-zero vector $v\in\R^k$ and $0 < \theta < \pi/2$, the circular cone ${\cal K}(\theta,v)$ is convex, using  the results of Denisov and Wachtel~\cite{Denisov-Wachtel:2} and Proposition\ref{D-W_cor} one gets 

\begin{prop}\label{Denisov_Wachtel} Suppose that the condition (C2) is satisfied and let $\E(\|\xi_k\|^{2+\delta}) < +\infty$ for some $\delta > 0$. Then  the following assertions hold: \\ 
1) there is   $0 < \theta_2  < \pi/2$ such that  $p^*_k(\theta_2) = 2$; \\
2) for any non-zero vector $v\in\R^k$,  there is a non-trivial  function ${\cal V}_v: {\cal K}(\theta_2,v)\to \R_+$ such that $\bigl({\cal V}_v(S(n\wedge \tau_{{\cal K}(\theta_2,v)})\bigr)$ is a martingale relative to the natural filtration of $(S(t))$,
\[
{\cal V}(x) = 0 \quad \quad \forall x\in\R^k\setminus{\cal K}(\theta_2,v)
\]
and 
\[
{\cal V}_v(x) ~\leq~ C(\|x\|^{2} + 1), \quad \quad \forall x\in {\cal K}(\theta_2,v);
\]
3) for any $x\in{\cal K}(\theta_2, v)$,
\[
\lim_{t\to\infty} t\,  \P_x(\tau_{{\cal K}(\theta_2,v)} > t) ~=~ \kappa {\cal V}_v(x).
\]
4) for some $x_0\in{\cal K}(\theta_2,v)$, 
\[
\inf_{y\in x_0+{\cal K}(\theta_2,v)} {\cal V}(y) \geq {\cal V}(x_0) ~>~0. 
\]
\end{prop}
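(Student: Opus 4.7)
The plan is to reduce everything to the results of Denisov--Wachtel~\cite{Denisov-Wachtel:2} already quoted above, plus Proposition~\ref{D-W_cor}, by choosing the aperture $\theta_2$ so that the characteristic exponent of the cone equals $2$ and then verifying the hypotheses (C1)--(C3) for the circular cone ${\cal K}(\theta_2,v)$.

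For assertion (1) I would use only the properties of the mapping $\theta\to p_k(\theta)$ recalled just before the statement: it is continuous and strictly decreasing from $]0,\pi[$ onto $]0,+\infty[$ with $p_k(\pi/2)=1$. Since $p_k$ attains every positive value and $p_k(\pi/2)=1<2$, strict monotonicity forces the preimage $\theta_2=p_k^{-1}(2)$ to satisfy $0<\theta_2<\pi/2$, which is exactly (1).

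For assertions (2) and (3), I would check that ${\cal K}(\theta_2,v)$ satisfies the three standing hypotheses of \cite{Denisov-Wachtel:2}. (C1) holds because any circular cone of aperture $<\pi/2$ is convex, and $\theta_2<\pi/2$ by (1). (C2) is assumed. For (C3), the relevant exponent is $p^*=p_k(\theta_2)=2$, and since $p^*\leq 2$ the criterion requires $\E(\|\xi_i\|^{p})<\infty$ for some $p>p^*=2$, which is precisely $\E(\|\xi_i\|^{2+\delta})<+\infty$. With these hypotheses in force, the existence of a non-negative function ${\cal V}_v$ on ${\cal K}(\theta_2,v)$ vanishing outside the cone, with the growth bound ${\cal V}_v(x)\leq C(\|x\|^{2}+1)$, and the tail asymptotic $\lim_t t\,\P_x(\tau_{{\cal K}(\theta_2,v)}>t)=\kappa {\cal V}_v(x)$, follow directly from Theorem~1 and Lemma~14 of~\cite{Denisov-Wachtel:2} applied with $p^*=2$. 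The martingale property of ${\cal V}_v(S(n\wedge\tau_{{\cal K}(\theta_2,v)}))$ is obtained from the harmonicity of ${\cal V}_v$ for the killed walk, i.e.\ $\E_x({\cal V}_v(S(1));\,\tau_{{\cal K}(\theta_2,v)}>1)={\cal V}_v(x)$ on ${\cal K}(\theta_2,v)$ together with the vanishing of ${\cal V}_v$ outside; the polynomial growth bound combined with the finite $(2+\delta)$-moment of the steps makes optional stopping legitimate on the stopped sequence.

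Assertion (4) is an immediate consequence of Proposition~\ref{D-W_cor} applied to the convex cone ${\cal K}(\theta_2,v)$, whose hypotheses (C2) and (C3) were just verified. The only thing to check is that ${\cal V}_v$ is non-trivial, which is part of the conclusion of~\cite{Denisov-Wachtel:2} invoked above. There is essentially no obstacle: the work is all in~\cite{Denisov-Wachtel:2}, and the proposition is a packaging of their results specialised to the specific circular cone with exponent $p^*=2$. The only subtlety is the bookkeeping of the two regimes in (C3) ($p=p^*$ versus $p>p^*$), and the fact that $p^*=2$ sits in the borderline $p^*\leq 2$ case which is why the strict moment condition $2+\delta$ is needed rather than merely $2$.
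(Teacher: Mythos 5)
Your proposal is correct and follows exactly the route the paper takes: the paper derives this proposition by the same one-line argument (monotonicity of $\theta\mapsto p_k(\theta)$ with $p_k(\pi/2)=1$ to get $\theta_2<\pi/2$, convexity of the circular cone, then Theorem~1 and Lemma~14 of Denisov--Wachtel together with Proposition~\ref{D-W_cor}). Your version merely spells out the verification of (C1)--(C3) and the borderline case $p^*=2$ in more detail than the paper does.
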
 
Remark that for those $x\in{\cal K}(\theta_2,v)$ for which ${\cal V}_v(x)\not=0$, from the last assertion of this proposition it follows that $\E_x\left(\tau_{{\cal K}(\theta_2,v)}\right) ~=~+\infty$. 

To prove Theorem~\ref{main_result_example_th1}, we will use the following consequence of this result. 

\begin{prop}\label{prop5_2} Let $\tau^*$ be a stopping time relative to some filtration $({\cal F}_k)$. Assume that the hypotheses of Proposition~\ref{Denisov_Wachtel} are satisfied and let for any $n\in\N$, the random variables $\xi_1,\ldots,\xi_n$ be ${\cal F}_n$-measurable and the random variable $\xi_{n+1}$  is independent on ${\cal F}_n$. Then for any  $v\in\R^k$ and $x\in {\cal K}(\theta_2,v)$,  
\[
\E_x\left(\tau^*\wedge\tau_{{\cal K}(\theta_2,v)}\right) ~=~+\infty \quad \text{whenever} \quad \E_x \left({\cal V}_v(S\bigl(\tau^*\wedge\tau_{{\cal K}(\theta_2,v)}\bigr)\right) ~<~{\cal V}_v(x). 
\]
\end{prop}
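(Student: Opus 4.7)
The plan is to argue by contraposition: assume that $T \dot= \tau^*\wedge \tau_{{\cal K}(\theta_2,v)}$ satisfies $\E_x(T) < +\infty$ and show that in this case $\E_x\bigl({\cal V}_v(S(T))\bigr) = {\cal V}_v(x)$, which rules out the strict inequality in the hypothesis. The heart of the argument is to apply optional stopping to the martingale $\bigl({\cal V}_v(S(n\wedge \tau_{{\cal K}(\theta_2,v)}))\bigr)$ at the bounded stopping time $T\wedge n$ and then pass to the limit $n\to\infty$ using uniform integrability.

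First I would observe that the martingale property extends from the natural filtration of $(S(t))$ to the larger filtration $({\cal F}_n)$. Indeed the hypothesis that $\xi_1,\ldots,\xi_n$ are ${\cal F}_n$-measurable while $\xi_{n+1}$ is independent of ${\cal F}_n$ forces $\E(\xi_{n+1}\mid {\cal F}_n) = 0$ and, more generally, preserves the conditional distribution of the future increments; consequently $(S(n))$ remains a martingale for $({\cal F}_n)$, $\tau_{{\cal K}(\theta_2,v)}$ remains an $({\cal F}_n)$-stopping time, and the bounded stopping time $T\wedge n$ is an $({\cal F}_n)$-stopping time as well. The optional stopping theorem then gives
\[
\E_x\bigl({\cal V}_v(S(T\wedge n\wedge \tau_{{\cal K}(\theta_2,v)}))\bigr) ~=~ {\cal V}_v(x),
\]
and since $T \leq \tau_{{\cal K}(\theta_2,v)}$, this simplifies to $\E_x\bigl({\cal V}_v(S(T\wedge n))\bigr) = {\cal V}_v(x)$ for every $n\in\N$.

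The remaining step is to justify $\E_x\bigl({\cal V}_v(S(T\wedge n))\bigr) \to \E_x\bigl({\cal V}_v(S(T))\bigr)$. Pointwise convergence is automatic because $\E_x(T) < \infty$ yields $T<+\infty$ almost surely, so $S(T\wedge n) = S(T)$ eventually. For uniform integrability I would use the growth bound ${\cal V}_v(y) \leq C(\|y\|^2 + 1)$ and dominate by $\sup_{m\leq T}\|S(m)\|^2$. Applying Doob's $L^2$ inequality componentwise to the martingale $\bigl(S(n\wedge T)\bigr)$ and Wald's identity (valid because the $\xi_i$ are centered, reduced, and $\E_x(T) < \infty$), I would estimate
\[
\E_x\Bigl(\sup_{m\leq n}\|S(m\wedge T)\|^2\Bigr) ~\leq~ 4\,\E_x\bigl(\|S(n\wedge T)\|^2\bigr) ~=~ 4k\,\E_x(T\wedge n) ~\leq~ 4k\,\E_x(T).
\]
Monotone convergence in $n$ then yields $\E_x\bigl(\sup_{m\leq T}\|S(m)\|^2\bigr) \leq 4k\,\E_x(T) < +\infty$, providing an integrable dominating function for ${\cal V}_v(S(T\wedge n))$. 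Dominated convergence concludes that $\E_x\bigl({\cal V}_v(S(T))\bigr) = {\cal V}_v(x)$, contradicting $\E_x\bigl({\cal V}_v(S(T))\bigr) < {\cal V}_v(x)$, which completes the contrapositive.

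The only delicate point is verifying that the martingale property of ${\cal V}_v(S(\cdot\wedge\tau_{{\cal K}(\theta_2,v)}))$ is preserved under enlargement of the filtration and that $T\wedge n$ is admissible for optional stopping in this enlarged filtration; all the rest reduces to standard $L^2$ estimates based on the finite second moment assumption and $\E_x(T)<+\infty$. Note that no continuity of ${\cal V}_v$ is needed because $S(T\wedge n)$ stabilizes at $S(T)$ for $n\geq T$.
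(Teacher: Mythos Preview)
Your proof is correct and follows essentially the same route as the paper: contraposition, optional stopping for the $({\cal F}_t)$-martingale ${\cal V}_v\bigl(S(\cdot\wedge T)\bigr)$, and uniform integrability via Doob's $L^2$ maximal inequality combined with the identity $\E_x(\|S(n\wedge T)\|^2) = \|x\|^2 + k\,\E_x(n\wedge T)$ (your Wald step drops the harmless $\|x\|^2$ term). The paper packages the Doob step through the submartingale $\sum_i |S_i(t)|$ rather than componentwise, but the substance is identical.
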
 
\begin{proof} By the stopping time theorem, from the second assertion of Proposition~\ref{Denisov_Wachtel} it follows that the  sequence 
\[
{\cal V}_v\bigl(S(t\wedge\tau^*\wedge \tau_{{\cal K}(\theta_2,v)})\bigr), \quad t\in\N, 
\]
is a non-negative  $({\cal F}_t)$ - martingale. Since for any $x\in{\cal K}(\theta_2,v)$, $\P_x$-a.s. $\tau_{{\cal K}(\theta_2,v)} < +\infty$, it converges a.s. to 
\[
{\cal V}_v\bigl(S(\tau^*\wedge \tau_{{\cal K}(\theta_2,v)})\bigr). 
\]
The main idea of the proof of Proposition~\ref{prop5_2} is the following~: Assuming that 
\[
\E_x\left(\tau^*\wedge\tau_{{\cal K}(\theta_2,v)}\right) ~<~+\infty,
\]
we will prove that  the martingale ${\cal V}_v\bigl(S(t\wedge\tau^*\wedge \tau_{{\cal K}(\theta_2,v)})\bigr)$ is uniformly integrable and we will conclude that 
\[
{\cal V}_v(x)  ~=~ \E_x \left({\cal V}_v\bigl(S(\tau^*\wedge\tau_{{\cal K}(\theta_2,v)})\bigr)\right). 
\]
For this  we first notice that  a sequence 
\[
{\cal M}_1(t)  ~=~ \sum_{i=1}^k |S_i(t)|, \quad t\in\N,
\]
where  $S_i(t)$ denotes the $i$-th coordinate of $S(t)$,  is a nonnegative $({\cal F}_t)$ - submartingale.  By the stopping time theorem, from this it follows that the sequence 
\[
{\cal M}_1(t\wedge\tau^*\wedge\tau_{{\cal K}(\theta_2,v)}), \quad t\in\N, 
\]
is also a nonnegative $({\cal F}_t)$-submartingale, and by  Doob's $L^p$-inequality (see for instance the book of David Williams~\cite{DWilliams}) with $p=2$,
\[
\E_x\left(\sup_{1\leq s \leq t} {\cal M}_1^2(s\wedge\tau^*\wedge\tau_{{\cal K}(\theta_2,v)})\right) ~\leq~ 4 \sup_{1\leq s\leq t} \E_x \left({\cal M}_1^2(s\wedge\tau^*\wedge\tau_{{\cal K}(\theta_2,v)})\right), \quad \forall t\in\N. 
\]
Since for any $t\in\N$, 
\[Ò
 \|S(t)\|^2 \leq {\cal M}_1^2(t) ~\leq~ k \|S(t)\|^2, 
\]
and the sequence $\|S(t)\|^2= S(t)\cdot S(t) = \sum_{i=1}^nS_i^2(t)$ is also a $({\cal F}_t)$ - sub-martingale, from this it follows that 
\begin{align} 
\E_x\left(\sup_{1\leq s \leq t} \|S(s\wedge\tau^*\wedge\tau_{{\cal K}(\theta_2,v)})\|^2\right) &\leq~   \sup_{1\leq s\leq t} \E_x \left(\sup_{1\leq s \leq t} {\cal M}_1^2(s\wedge\tau^*\wedge\tau_{{\cal K}(\theta_2,v)})\right) \nonumber \\ 
&\leq~ 4  \sup_{1\leq s\leq t} \E_x \left( {\cal M}_1^2(s\wedge\tau^*\wedge\tau_{{\cal K}(\theta_2,v)})\right) \nonumber \\ 
&\leq~ 4 k \sup_{1\leq s\leq t} \E_x \left(\|S(s\wedge\tau^*\wedge\tau_{{\cal K}(\theta_2,v)})\|^2\right) \nonumber \\ 
&\leq~ 4k \E_x \left(\|S(t\wedge\tau^*\wedge\tau_{{\cal K}(\theta_2,v)})\|^2\right), \quad \forall t\in\N.  \label{e1} 
\end{align} 
Remark moreover that the sequence
\[
{\cal M}_2(t) ~=~ \|S(t)\|^2 - k t, \quad t\in\N,
\]
is $({\cal F}_t)$ - martingale. By the stopping time theorem, the sequence 
\[
{\cal M}_2(t\wedge\tau^*\wedge\tau_{{\cal K}(\theta_2,v)}) ~=~ \|S(t\wedge\tau^*\wedge\tau_{{\cal K}(\theta_2,v)})\|^2 - k \left( t\wedge\tau^*\wedge\tau_{{\cal K}(\theta_2,v)}\right) , \quad t\in\N, 
\]
is therefore also a  $({\cal F}_t)$ - martingale, and consequently, 
\be\label{e2} 
E_x \left(\|S(t\wedge\tau^*\wedge\tau_{{\cal K}(\theta_2,v)})\|^2\right) ~=~ \|x\|^2 + k\, \E_x(t\wedge\tau^*\wedge\tau_{{\cal K}(\theta_2,v)}), \quad \forall t\in\N. 
\ee
Remark finally that by Proposition~\ref{Denisov_Wachtel}, for any $\in\N$, 
\be\label{e3}
\sup_{1\leq s \leq t} {\cal V}(s\wedge \tau^*\wedge\tau_{{\cal K}(\theta_2,v)})  ~\leq~ C\left( \sup_{1\leq s\leq t} \|S(s\wedge \tau^*\wedge\tau_{{\cal K}(\theta_2,v)})\|^2 + 1\right).
\ee
When combined together, relations \eqref{e1}, \eqref{e2}  and \eqref{e3} show that for any $x\in{\cal K}(\theta_2,v)$ and $t\in\N$, 
\begin{align*} 
\E_x\left(\sup_{1\leq s \leq t}  {\cal V}(s\wedge \tau^*\wedge\tau_{{\cal K}(\theta_2,v)})   \right) &\leq~
C \E_x\left(\sup_{1\leq s \leq t}  \|S(s\wedge\tau^*\wedge\tau_{{\cal K}(\theta_2,v)})\|^2\   \right) + C \\ &\leq 4 C k \left(\|x\|^2 + k \E_x(t\wedge\tau^*\wedge\tau_{{\cal K}(\theta_2,v)})\right) + C
\end{align*} 
By the monotone convergence theorem, from the last relation it follows that 
\[
\E_x\left(\sup_{1\leq s < +\infty}  {\cal V}(s\wedge \tau^*\wedge\tau_{{\cal K}(\theta_2,v)})   \right) ~\leq~ 4 C n \left(\|x\|^2 + k \E_x(\tau^*\wedge\tau_{{\cal K}(\theta_2,v)})\right) + C.
\]
Whenever $\E_x\left(\tau^*\wedge\tau_{{\cal K}(\theta_2,v)})\right) < +\infty$, the martingale $\left({\cal V}_v\bigl(S(t\wedge\tau^*\wedge \tau_{{\cal K}(\theta_2,v)})\bigr)\right)$ 
is therefore uniformly integrable and 
\[
\E_x\left( {\cal V}_v\bigl(S(\tau^*\wedge \tau_{{\cal K}(\theta_2,v)})\bigr) \right) ~=~ \lim_{t\to\infty} \E_x\left( {\cal V}_v\bigl(S(t\wedge\tau^*\wedge \tau_{{\cal K}(\theta_2,v)})\bigr)\right) ~=~  {\cal V}(x)
\]
\end{proof} 

\subsection{Proof of Theorem~\ref{main_result_example_th1}.}

Let $\hat{v}$  be a unit vector in $\R^d$ which is normal  to $\partial\hat{\cal C}$ at the point $\hat{m}$ and such that 
\[
(\alpha - \hat{m}) \cdot \hat{v} ~\geq~0, \quad \forall \alpha\in\hat{\cal C}. 
\]
Remark that such a vector $\hat{v}$ exists because the cone $\hat{\cal C}$ is convex,  it is unique because the boundary of $\hat{\cal C}$ is $C^1$, and it  belongs to the hyperplane $\hat\Pi$ because the vector $\hat{m}\in\partial\hat{\cal C}$ is orthogonal to $\hat{v}$. We consider a circular cone in $\hat\Pi$~: 
\[
{\cal K}(\theta, \hat{v}) ~\dot=~ \{x\in\hat\Pi~:~ 0\leq \angle(x,\hat{v}) < \theta\},
\]
and we let 
\[
{\cal C}(\theta, \hat{v}) ~=~\{ x\in\R^d~:~ Pr(x) \in{\cal K}(\theta, \hat{v})\}, 
\]
where $Pr(x)$ denotes the orthogonal projection of $x$ onto $\hat\Pi$. For a given $0 < \eps < \pi/2$, we denote by  $\tau_\eps$  the  first time when the homogeneous random walk $(\hat{X}(t))$ in $M\Z^d$ exits from the circular cone 
\[
{\cal C}_\eps ~=~ \{x\in\R^d~:~ 0 \leq \angle(x,\hat{m}) < \eps\}, 
\]
\[
\tau_\eps ~=~ \inf\{t > 0~:~ \hat{X}(t)\not\in {\cal C}_\eps\}. 
\]
The first time when the random walk $(\hat{X}(t))$ exits from the set $\hat{\cal C}(\theta, \hat{v})$ will be denoted by $\tau_{{\cal K}(\theta,\hat{v})}$~: 
\[
\tau_{{\cal K}(\theta,\hat{v})} ~=~\inf\{t > 0~:~ \hat{X}(t)\not\in \hat{\cal C}(\theta, \hat{v})\}. 
\]
Remark that according to the definition of the cone $\hat{\cal C}(\theta, \hat{v})$, $\tau_{{\cal K}(\theta,\hat{v})}$ is also the first time when the centered random walk $\bigl(\hat{S}(t) = Pr(\hat{X}(t))\bigr)$ exits form the circular cone ${\cal K}(\theta, \hat{v})$. 

Under the hypotheses of Theorem~~\ref{main_result_example_th1}, the centered random walk $\bigl(\hat{S}(t) = Pr(\hat{X}(t))\bigr)$ satisfies the conditions of Proposition~\ref{Denisov_Wachtel}  and hence, there is   $0 < \theta_2  < \pi/2$ and a non-zero  function ${\cal V}: {\cal K}(\theta_2,\hat{v})\to \R_+$ such that 
 \[
{\cal V}(x) = 0 \quad \quad \forall x\in\Pi\setminus{\cal K}(\theta_2,\hat{v}),
\]
\be\label{e3p}
{\cal V}(x) ~\leq~ C(\|x\|^{2} + 1), \quad \quad \forall x\in {\cal K}(\theta_2,\hat{v});
\ee
and $\bigl({\cal V}(\hat{S}(t\wedge \tau_{{\cal K}(\theta_2,\hat{v})})\bigr)$ is a martingale relative to the natural filtration of $(\hat{S}(t))$ (and consequently also relative to the natural filtration of $(\hat{X}(t))$). 
Moreover,  letting for $x\in\R^d$, 
\[
{\cal V}(x) ~=~{\cal V}(Pr(x)), 
\]
and using Proposition~\ref{prop5_2}, we obtain that for any $x\in {\cal C}_\eps\cap{\cal C}(\theta, \hat{v}) \cap M\Z^d$, 
\be\label{e4}
\E_x\left(\tau_\eps\wedge\tau_{{\cal K}(\theta_2,\hat{v})}\right) ~=~+\infty \quad \text{whenever} \quad \E_x \left({\cal V}(\hat{X}\bigl(\hat\tau_\eps\wedge\tau_{{\cal K}(\theta_2,\hat{v})}\bigr)\right) ~<~{\cal V}(x). 
\ee
To prove  Theorem~~\ref{main_result_example_th1} we will choose $\eps > 0$ such that ${\cal C}_\eps\cap{\cal C}(\theta_2,\hat{v}) \subset{\cal C}$ and next we will show that for some $x\in {\cal C}_\eps\cap{\cal C}(\theta_2,\hat{v})\cap M\Z^d$, 
\be\label{e5}
\E_x \left({\cal V}\bigl(\hat{X}\bigl(\tau_\eps\wedge\tau_{{\cal K}(\theta_2,\hat{v})}\bigr)\bigr)\right) ~<~{\cal V}(x).
\ee
The last relation combined with  \eqref{e4} will imply  that 
\[
\E_x(\tau) ~\geq~ E_x\left(\tau_\eps\wedge\tau_{{\cal K}(\theta_2,\hat{v})}\right) ~=~+\infty. 
\]

\medskip 

To prove \eqref{e5} we need the following preliminary results. 

\begin{lemma}\label{example_lemma1} For any $\sigma > 0$ there are two strictly positive real numbers $\theta > 0$ and $C > 0$ such that for any $t\in\N$, 
\be\label{exemple_lemma1_e1} 
\P_0\bigl(\|\hat{X}(t) - t\hat{m} \| \geq \sigma t\bigr) ~\leq~C \exp(-\theta t). 
\ee
\end{lemma}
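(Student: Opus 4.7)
The plan is a classical Cram\'er-type exponential inequality applied coordinate-wise. The key structural input is that the original random walk $(X(t))$ has a step generating function $R(\alpha)$ finite in a neighborhood of $D$ (assumption (B2)), and since $0\in D$ (because $R(0)=1$), the transformed random walk $\hat X(t)=MX(t)$ has i.i.d.\ steps $\hat\xi_i$ with exponential moments in a neighborhood of the origin. Because $\E_0(\hat\xi_1)=\hat m$, the cumulant generating function of the centered step $\hat\xi_1-\hat m$ vanishes at $0$ together with its gradient.

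First I would show that there exist $\delta_0>0$ and $C_0>0$ such that
\[
\E_0\bigl(\exp(\beta\cdot(\hat\xi_1-\hat m))\bigr)~\leq~\exp(C_0\|\beta\|^2),\qquad \forall\beta\in\R^d,~\|\beta\|\leq\delta_0.
\]
This is a routine Taylor expansion at the origin of the analytic function $\beta\mapsto\log\E_0(\exp(\beta\cdot(\hat\xi_1-\hat m)))$, whose value and gradient at $\beta=0$ vanish and whose Hessian is bounded on the compact ball $\{\|\beta\|\leq\delta_0\}$ by (B2) applied to the translated function $\beta\mapsto R(M^\top\beta)\exp(-\beta\cdot\hat m)$.

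Second, for any unit vector $u\in\R^d$ and any $0<\beta\leq\delta_0$, independence and Chebyshev's inequality yield
\[
\P_0\bigl((\hat X(t)-t\hat m)\cdot u\geq (\sigma/2)t\bigr)~\leq~e^{-\beta\sigma t/2}\,\bigl(\E_0(e^{\beta(\hat\xi_1-\hat m)\cdot u})\bigr)^t~\leq~\exp\bigl(t(-\beta\sigma/2+C_0\beta^2)\bigr).
\]
Picking $\beta=\min(\delta_0,\sigma/(4C_0))$ makes the bracketed expression a strictly negative constant $-\theta_1(\sigma)$ independent of $u$.

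Third, I would reduce the norm event to finitely many scalar events by compactness of the unit sphere. Choose unit vectors $u_1,\ldots,u_N$ such that any unit vector $u$ satisfies $\|u-u_i\|\leq 1/2$ for some $i$. Then for any $y\in\R^d$ with $\|y\|\geq\sigma t$, taking $u=y/\|y\|$ and a nearby $u_i$ gives $y\cdot u_i=\|y\|+y\cdot(u_i-u)\geq\|y\|/2\geq \sigma t/2$, so
\[
\P_0\bigl(\|\hat X(t)-t\hat m\|\geq\sigma t\bigr)~\leq~\sum_{i=1}^N\P_0\bigl((\hat X(t)-t\hat m)\cdot u_i\geq\sigma t/2\bigr)~\leq~N\exp(-\theta_1(\sigma)\,t),
\]
which is the required bound with $C=N$ and $\theta=\theta_1(\sigma)$. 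There is no real obstacle: assumption (B2) supplies all the exponential moments one needs, and the only point requiring care is to make the Chernoff constant uniform in the direction $u$, which is handled by the finite covering argument on the unit sphere.
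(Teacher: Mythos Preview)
Your argument is correct and follows essentially the same route as the paper: both proofs use the exponential Markov/Chernoff inequality together with a second-order Taylor expansion of the log-Laplace transform at the origin (where it and its gradient vanish) to extract a strictly positive exponential rate. The only cosmetic difference is packaging: the paper invokes the G\"artner--Ellis large deviation upper bound and then shows $\inf_{\|v-\hat m\|\ge\sigma}\Lambda^*(v)>0$ via the same Taylor estimate, whereas you bypass the LDP statement and handle the supremum over directions by a finite covering of the unit sphere; both arrive at the same bound with the same ingredients.
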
 
\begin{proof} This is a consequence of Cramer's large deviation upper bound (see for instance Gartner-Ellis theorem in the book of Dembo and Zeitouni~\cite{D-Z}). Indeed, because of  the hypotheses (B2), for any closed set $F\subset \R^d$,
\be\label{example_lemma1_e2} 
\limsup_{t\to\infty} \frac{1}{t}\log \P\left(\frac{1}{t}\hat{X}(t) \in F\right) ~\leq~ - \inf_{v\in F} \Lambda^*(v) 
\ee
where 
\[
\Lambda^*(v) ~=~ \sup_{\alpha\in\R^d} \left(\alpha\cdot v - \Lambda(\alpha)\right) 
\]
is the convex conjugate of the function $\Lambda(\alpha) = \log\E_0(\exp(\alpha\cdot \hat{X}(1)))$. When applied with $F=\{v\in\R^d :~\|v-\hat{m}\| \geq \sigma\}$ for $\sigma>0$,  the upper large deviation  bound \eqref{example_lemma1_e2} proves \eqref{exemple_lemma1_e1} with some $C > 0$ and 
\[
\theta = \inf_{v\in \R^d : \|v-\hat{m}\| \geq \sigma}\Lambda^*(v)/2
\]
whenever 
\be\label{example_lemma1_e3} 
 \inf_{v\in \R^d : \|v-\hat{m}\| \geq \sigma} \Lambda^*(v) > 0. 
\ee
To complete the proof of Lemma~\ref{example_lemma1} it is therefore sufficient  to prove that for any $\sigma > 0$, \eqref{example_lemma1_e3}  holds. For this we notice that under the hypotheses (B2), the function $\Lambda$ is $C^\infty$ in a neighborhood of the origin $0\in\R^d$ with $\Lambda(0)= 0$ and $\nabla\Lambda(0) = \hat{m}$. By Taylor expansion, on gets therefore 
\[
L(\alpha) = \alpha\cdot \hat{m} + \frac{1}{2} \alpha\cdot \partial^2\Lambda(0)\alpha  + o(\alpha) 
\]
where $\partial^2\Lambda(0)$ denotes the Hessian matrix of $\Lambda$ at $0$, and $o(\alpha)/\|\alpha\|^2 \to 0$ when $\alpha\to 0$. This proves that for some $\delta_0 > 0$ and $C_0 >0$, 
\[
\Lambda(\alpha) \leq \alpha\cdot \hat{m}  +  C_0\|\alpha\|^2 \quad \quad  \text{whenever} \quad \quad \|\alpha\| \leq \delta_0,
\]
and consequently, for any $0 < \delta < \delta_0$ and $v\in\R^d$ with $\|v-\hat{m}\| \geq \sigma$, 
\begin{align*}
\Lambda^*(v) &= \sup_{\alpha\in\R^d} \left( \alpha\cdot v - \Lambda(\alpha)\right) ~\geq~ \sup_{\alpha\in\R^d : \|\alpha\| \leq\delta} \left( \alpha\cdot (v - \hat{m}) + \alpha\cdot \hat{m} - \Lambda(\alpha)\right) \\
&\geq   \sup_{\alpha\in\R^d : \|\alpha\| \leq\delta} \left(\alpha\cdot (v-\hat{m}) - C_0\|\alpha\|^2\right)  ~\geq~ \delta \|v-\hat{m}\| - C_0 \delta^2 ~\geq~ \delta( \sigma - C_0\delta). 
\end{align*}
Letting $\delta = \min\{\delta_0, \sigma/(2C_0)\}$ we conclude therefore that 
\[
\inf_{v\in \R^d : \|v-\hat{m}\| \geq \sigma} \Lambda^*(v)  ~\geq~ \delta( \sigma - C_0\delta) ~\geq~ \delta \sigma/2 ~>~ 0. 
\]
\end{proof}

\begin{lemma}\label{example_lemma2} For any $\delta > 0$ small enough, there is $\kappa > 0$  such that for any $x\in M\Z^d$ and $t\in\N$ 
\be\label{example_lemma2_e1} 
\P_0(\hat{X}(t) = x) \leq  \exp(-\delta \|x\|) \quad  \text{whenever }  \quad  t < \kappa \|x\|. 
\ee
\end{lemma}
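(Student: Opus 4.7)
The plan is to prove the bound by a standard Chernoff-type argument, tilting the distribution in the direction of $x$. The set-up mirrors that of Lemma~\ref{rl_lemma1} in the paper, but now applied to the homogeneous (non-killed) random walk $(\hat{X}(t))$ on $M\Z^d$.

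First, I would use assumption (B2) to note that the moment generating function $\hat{R}(\alpha) = \E_0(\exp(\alpha\cdot\hat{X}(1)))$ is finite on some ball $\{\alpha\in\R^d :~\|\alpha\|\leq\delta_0\}$, so that
\[
C_\delta ~\dot=~ \sup_{\|\alpha\|\leq\delta} \hat{R}(\alpha) ~<~ +\infty
\]
for every $0<\delta\leq\delta_0$. Then for any $x\in M\Z^d$, applying Chebyshev's inequality with the tilt $\alpha = 2\delta\, x/\|x\|$ (for $2\delta\leq\delta_0$) yields
\[
\P_0(\hat{X}(t)=x) ~\leq~ \P_0(\alpha\cdot\hat{X}(t)\geq 2\delta\|x\|) ~\leq~ \exp(-2\delta\|x\|)\, C_{2\delta}^t.
\]

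Second, I would choose $\kappa>0$ small enough so that for $t<\kappa\|x\|$ the factor $C_{2\delta}^t$ absorbs only half of the exponential gain. Concretely, set
\[
\kappa ~=~ \frac{\delta}{\log(\max(C_{2\delta},e))}.
\]
Then $t<\kappa\|x\|$ implies $C_{2\delta}^t\leq\exp(\delta\|x\|)$, and substituting into the previous display gives the desired bound
\[
\P_0(\hat{X}(t)=x) ~\leq~ \exp(-2\delta\|x\|+\delta\|x\|) ~=~ \exp(-\delta\|x\|).
\]
The smallness assumption on $\delta$ is just $2\delta\leq\delta_0$, which ensures $C_{2\delta}$ is finite.

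There is no real obstacle here: the argument is a one-parameter Chernoff bound, and the only minor point to watch is that $\kappa$ must depend on $\delta$ through $C_{2\delta}$, and that one must handle the degenerate case $C_{2\delta}\leq 1$ (in which case any $\kappa>0$ works, hence the $\max$ with $e$ in the denominator). The construction is essentially identical to the one used in the proof of Lemma~\ref{rl_lemma1}, applied to $(\hat{X}(t))$ instead of $(Z(t))$ and with the target point $x$ playing the role of $y_n$.
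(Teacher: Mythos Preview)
Your proof is correct and follows essentially the same Chernoff-type argument as the paper: both tilt by $\alpha = 2\delta\,x/\|x\|$, bound $\P_0(\hat{X}(t)=x)\leq C^t\exp(-2\delta\|x\|)$ via (B2), and then set $\kappa=\delta/\ln C$. Your extra care with the $\max(C_{2\delta},e)$ in the denominator to handle the degenerate case $C_{2\delta}\leq 1$ is a nice touch that the paper omits.
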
 
\begin{proof} The proof of this lemma is quite similar to the proof of Lemma~\ref{rl_lemma1}. Because of Assumption~$(B2)$, there is $\delta_0 > 0$ such that 
\[
C ~\dot=~ \sup_{\alpha\in\R^d~:~\|\alpha\| \leq 2\delta_0} \E_0\bigl(\exp(\alpha\cdot \hat{X}(1)) \bigr)   
~=~ \sup_{\alpha\in\R^d~:~\|\alpha\| \leq 2\delta_0} R\bigl(^t\!M\alpha\bigr) ~<~+\infty.
\]
For any $0< \delta \leq \delta_0$ and $\alpha \in\R^d$ with $\|\alpha\| \leq 2\delta$, one gets therefore 
\[
\P_0(\hat{X}(t) = x) ~\leq~ \exp(-\alpha\cdot(x)) R^t(\alpha) ~\leq~ C^t \exp(-\alpha\cdot(y-x))  
\]
When applied with $\alpha = 2\delta (x)/\|x\|$, the last relation proves that 
\[
\P_0(\hat{X}(t) = x) ~\leq~ \exp\bigl(- 2\delta\|x\| + t \ln C\bigr) 
\]
and consequently, letting $\kappa = \delta/\ln C$ one gets \eqref{example_lemma2_e1}. 
\end{proof} 
When combined together, Lemma~\ref{example_lemma1} and Lemma~\ref{example_lemma2} imply the following statement.

\begin{lemma}\label{example_lemma3} For any $\sigma > 0$ there are two strictly positive real numbers $\delta > 0$ and $C > 0$ such that for any $t\in\N$ and $x\in M\Z^d$,
\be\label{example_lemma3_e1} 
\P_0(\hat{X}(t)=x) \leq C \exp( -\delta t - \delta \|x\| ) \quad \text{whenever} \quad \|x - t \hat{m}\| \geq \sigma t. 
\ee
\end{lemma}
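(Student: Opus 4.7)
The strategy is a two-regime case split on the size of $t$ relative to $\|x\|$, in the spirit of the decomposition already used for the Green function in the proof of Theorem~\ref{ratio_limit_theorem}. In one regime the bound follows from the short-time large deviation estimate of Lemma~\ref{example_lemma2}, and in the other from the typical-deviation estimate of Lemma~\ref{example_lemma1}; the hypothesis $\|x-t\hat m\|\geq \sigma t$ will be used only in the second regime, to transfer the tail bound on $\hat X(t)$ to a bound on $\P_0(\hat X(t)=x)$.

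First, pick $\delta_1>0$ and $\kappa_1>0$ as provided by Lemma~\ref{example_lemma2}, so that
\[
\P_0(\hat X(t)=x) \leq \exp(-\delta_1\|x\|) \qquad \text{whenever } t<\kappa_1\|x\|.
\]
Next, apply Lemma~\ref{example_lemma1} with the given $\sigma>0$ to obtain constants $C_0>0$ and $\theta>0$ with
\[
\P_0\bigl(\|\hat X(t)-t\hat m\|\geq \sigma t\bigr) \leq C_0\exp(-\theta t), \qquad t\in\N.
\]
Now set $\kappa = \min(\kappa_1,1)$ and choose $\delta>0$ small enough that
\[
\delta(1+\kappa) \leq \delta_1 \quad\text{and}\quad \delta(1+1/\kappa)\leq \theta.
\]

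Fix $t\in\N$ and $x\in M\Z^d$ with $\|x-t\hat m\|\geq \sigma t$. If $t\leq \kappa\|x\|$, Lemma~\ref{example_lemma2} yields
$\P_0(\hat X(t)=x)\leq \exp(-\delta_1\|x\|)$, and since $\delta t\leq \delta\kappa\|x\|$ together with the choice of $\delta$ gives $\delta t+\delta\|x\|\leq \delta(1+\kappa)\|x\|\leq \delta_1\|x\|$, the desired inequality follows with $C=1$. Otherwise $t>\kappa\|x\|$; because $\{\hat X(t)=x\}\subset\{\|\hat X(t)-t\hat m\|\geq\sigma t\}$ under the hypothesis on $x$, Lemma~\ref{example_lemma1} gives
\[
\P_0(\hat X(t)=x)\leq C_0\exp(-\theta t),
\]
and since $\|x\|<t/\kappa$ we have $\delta t+\delta\|x\|<\delta(1+1/\kappa)t\leq \theta t$, so the bound holds with $C=C_0$.

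There is no genuine obstacle here; the only thing to watch is the joint calibration of the constants $\kappa$ and $\delta$ so that both regimes yield the same final exponent $-\delta t-\delta\|x\|$. Taking $C=\max(1,C_0)$ completes the argument.
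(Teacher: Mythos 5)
Your proof is correct and takes essentially the same route as the paper's: the identical two-regime split on $t$ versus $\kappa\|x\|$, with Lemma~\ref{example_lemma2} covering the short-time regime and Lemma~\ref{example_lemma1} (via the inclusion $\{\hat X(t)=x\}\subset\{\|\hat X(t)-t\hat m\|\geq\sigma t\}$) covering the long-time regime, differing only in the bookkeeping of the constants. The sole cosmetic point is to split at $t<\kappa\|x\|$ rather than $t\leq\kappa\|x\|$ so that the strict hypothesis of Lemma~\ref{example_lemma2} applies verbatim.
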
 
\begin{proof} Indeed, by Lemma~\ref{example_lemma2}, for any $\delta_1 > 0$ small enough, there is $\kappa > 0$ such that for any $x\in M\Z^d$ and $t\in\N$ satisfying the inequality $t < \kappa \|x\|$, the following relation holds
\begin{align}
\P_0(\hat{X}(t)=x) &\leq \exp(- 2\delta_1 \|x\|)  \nonumber \\
&\leq~ \exp\left( -\frac{\delta_1}{\kappa} t - \delta_1 \|x\| \right). \label{example_lemma3_e2} 
\end{align}
By Lemma~\ref{example_lemma1}, for any $\sigma > 0$ there are $C> 0$ and $\theta > 0$ such that for any $x\in M\Z^d$ and $t\in\N$, 
\[
\P_0(\hat{X}(t)=x) \leq C \exp( - 2\theta t ) \quad \text{whenever} \quad \|x - t \hat{m} \| \geq \sigma t.  
\]
If $t \geq \kappa \|x\|$ and  $\|x - t \hat{m}\| \geq \sigma t$   one gets therefore 
\[
\P_0(\hat{X}(t)=x) \leq C \exp( - \theta t  -  \theta\kappa\|x\|).
\]
When combined with \eqref{example_lemma3_e2} the last inequality proves \eqref{example_lemma3_e1} with $\delta = \min\{\delta_1, \delta_1/\kappa, \theta, \theta\kappa\}$. 
\end{proof} 
As a consequence of Lemma~\ref{example_lemma3} we obtain the following statement.

\begin{lemma}\label{example_lemma3a} For any $\eps > 0$ there are two strictly positive real numbers $\delta > 0$ and $C > 0$ such that for any $t\in\N$ and $x\in M\Z^d\cap{\cal C}_\eps$ and $y\in M\Z^d\setminus{\cal C}_\eps$,
\be\label{example_lemma3a_e1} 
\P_x(\hat{X}(t)=y) \leq C \exp( -\delta t - \delta \|y-x\| ) 
\ee
\end{lemma}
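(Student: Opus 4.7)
The plan is to use translation invariance of the homogeneous walk $(\hat{X}(t))$ to reduce to Lemma~\ref{example_lemma3}. Since $\P_x(\hat{X}(t)=y) = \P_0(\hat{X}(t) = y-x)$, it suffices to find $\sigma = \sigma(\eps) > 0$ such that
\[
\|(y-x) - t\hat{m}\| \geq \sigma t
\]
whenever $x\in{\cal C}_\eps$ and $y\notin{\cal C}_\eps$, and then apply Lemma~\ref{example_lemma3} to $z = y-x$ with this $\sigma$.

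The main step is thus the geometric separation. I treat $\eps\in(0,\pi/2)$ (the case relevant to the paper; for larger $\eps$ the complement of ${\cal C}_\eps$ is a small cone opposite to $\hat{m}$, from which $x + t\hat{m}$ is already very far, and the same parallel/perpendicular estimate below goes through even more easily). Writing $\hat{e} = \hat{m}/\|\hat{m}\|$ and decomposing each vector $v = v_\parallel + v_\perp$ with $v_\parallel = (v\cdot\hat{e})\hat{e}$, the condition $v\in{\cal C}_\eps$ becomes $v\cdot\hat{e} > 0$ and $\|v_\perp\| < \tan(\eps)(v\cdot\hat{e})$. Suppose $y = x + t\hat{m} + z$ with $\|z\| < \sigma t$. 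Since $x\in{\cal C}_\eps$,
\[
y\cdot\hat{e} \geq x\cdot\hat{e} + t\|\hat{m}\| - \sigma t, \qquad \|y_\perp\| \leq \|x_\perp\| + \sigma t \leq \tan(\eps)(x\cdot\hat{e}) + \sigma t.
\]
An elementary computation shows $\|y_\perp\| < \tan(\eps)(y\cdot\hat{e})$ provided $\sigma(1+\tan(\eps)) < \tan(\eps)\|\hat{m}\|$; such a $\sigma$ would force $y\in{\cal C}_\eps$, contradicting the hypothesis. Hence any $\sigma < \tan(\eps)\|\hat{m}\|/(1+\tan(\eps))$ works (note this value is strictly smaller than $\|\hat{m}\|$, so also $y\cdot\hat{e} > 0$ as required).

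With the separation in hand, Lemma~\ref{example_lemma3} applied to $z = y-x$ supplies constants $C,\delta > 0$ (depending only on $\eps$) such that $\P_0(\hat{X}(t)=y-x) \leq C\exp(-\delta t - \delta\|y-x\|)$ for all $t\geq 1$, and translation invariance immediately gives the stated bound. The case $t=0$ is trivial, since $\P_x(\hat{X}(0)=y) = 0$ for $x\neq y$. The only real obstacle is identifying the correct separation constant $\sigma(\eps)$, and even that reduces to the one-line algebraic inequality above once the parallel/perpendicular decomposition with respect to $\hat{m}$ is set up.
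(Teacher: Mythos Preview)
Your proof is correct and follows the same overall route as the paper: reduce via translation invariance to Lemma~\ref{example_lemma3} by exhibiting a separation constant $\sigma(\eps)>0$ such that $\|(y-x)-t\hat{m}\|\ge\sigma t$ whenever $x\in{\cal C}_\eps$ and $y\notin{\cal C}_\eps$. The only difference is cosmetic: the paper first handles $x=0$ by observing that the Euclidean distance from $t\hat{m}$ to $\partial{\cal C}_\eps$ equals $t\|\hat{m}\|\sin\eps$, and then reduces general $x$ to this case via the convex-cone inclusion $x+{\cal C}_\eps\subset{\cal C}_\eps$, whereas you carry out the same geometry directly through the parallel/perpendicular decomposition and obtain the (slightly different but equally valid) constant $\sigma=\tan(\eps)\|\hat{m}\|/(1+\tan\eps)$.
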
 
\begin{proof} Indeed,  suppose first that $x=0$. For any $t\in\N$, the distance between the point $t \hat{m}$ and the boundary $\partial{\cal C}_\eps = \{y\in\R^d~:~ \angle(x, \hat{m}) = \eps\}$ of the cone ${\cal C}_\eps$ is equal to $t \|\hat{m}\|\, t \sin(\eps)$, and consequently,  for any $z\in M\Z^d\setminus{\cal C}_\eps$, 
\[
\|z - t \hat{m} \|  ~\geq~ t \|\hat{m}\| \sin(\eps).
\]
By Lemma~\ref{example_lemma3} applied with $\sigma = \|\hat{m}\| \sin(\eps)$, there are therefore two strictly positive real numbers $\delta > 0$ and $C > 0$ such that for any $t\in\N$ and $z\in M\Z^d\setminus{\cal C}_\eps$, 
\be\label{example_lemma3a_e2} 
\P_0(\hat{X}(t)=z) \leq C \exp( -\delta t - \delta \|z\| ) 
\ee
For $x=0$, our lemma is therefore proved. To prove \eqref{example_lemma3a_e1} for an arbitrary $x\in M\Z^d\cap{\cal C}_\eps$ it is sufficient to notice that for any $x\in M\Z^d\cap{\cal C}_\eps$, 
\[
 x+{\cal C}_\eps \subset {\cal C}_\eps,
\]
for any $x,y\in M\Z^d$, 
\[
\P_x(\hat{X}(t)=y) = \P_0(\hat{X}(t) = y-x), 
\] 
and for  $y\in M\Z^d\setminus{\cal C}_\eps$, 
\[
y-x \in M\Z^d\setminus( x + {\cal C}_\eps).
\]
Using therefore \eqref{example_lemma3a_e2} with $z=y-x$ , one gets 
\eqref{example_lemma3a_e1}. 
\end{proof} 

Now we are able to get 
\begin{lemma}\label{example_lemma4}  Let a sequence of points $(x_n)\subset \left({\cal C}_\eps\cap M\Z^d\right)^\N$ be such that 
\[
\lim_{n\to\infty} x_n\cdot m ~=~+ \infty \quad \text{and} \quad \sup_{n} \|Pr(x_n)\| ~<~+\infty. 
\]
Then 
\be\label{e6} 
\lim_{n\to\infty} \E_{x_n} \left({\cal V}\left(\hat{X}\bigl(\tau_\eps\wedge\tau_{{\cal K}(\theta_2,\hat{v})}\bigr)\right)\right)  ~=~0.
\ee
\end{lemma}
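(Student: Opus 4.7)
The plan is to split the expectation according to which of the two stopping times realizes the minimum $T_n = \tau_\eps\wedge \tau_{{\cal K}(\theta_2,\hat{v})}$. The first observation is that the event $\{T_n = \tau_{{\cal K}(\theta_2,\hat{v})}\}$ contributes nothing: on this event $Pr(\hat{X}(T_n)) \notin {\cal K}(\theta_2,\hat{v})$, and the boundary vanishing condition in Proposition~\ref{Denisov_Wachtel} together with the convention ${\cal V}(x) = {\cal V}(Pr(x))$ forces ${\cal V}(\hat{X}(T_n)) = 0$. Hence
\[
\E_{x_n}\bigl({\cal V}(\hat{X}(T_n))\bigr) \;=\; \E_{x_n}\bigl({\cal V}(\hat{X}(\tau_\eps)); \tau_\eps < \tau_{{\cal K}(\theta_2,\hat{v})}\bigr),
\]
and it suffices to show that this last quantity tends to $0$.

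Combining the quadratic bound ${\cal V}(y) \leq C(\|Pr(y)\|^2 + 1) \leq C(\|y\|^2 + 1)$ from Proposition~\ref{Denisov_Wachtel} with Lemma~\ref{example_lemma3a}, the right-hand side is at most
\[
C \sum_{t\geq 1} \sum_{y\notin {\cal C}_\eps} (\|y\|^2 + 1)\, \P_{x_n}(\hat{X}(t) = y) \;\leq\; C' \sum_{y\notin {\cal C}_\eps} (\|y\|^2 + 1) \exp(-\delta \|y - x_n\|).
\]
Set $s_n = x_n\cdot \hat{m}$ and $M = \sup_n \|Pr(x_n)\|$; by hypothesis $s_n\to\infty$ and $M<\infty$. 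After the change of variable $z = y - x_n$, the task reduces to proving that
\[
\sum_{z \,:\, z + x_n \notin {\cal C}_\eps} (\|z + x_n\|^2 + 1)\exp(-\delta \|z\|) \;\longrightarrow\; 0.
\]

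The decisive step is a geometric estimate: from a point $x_n$ lying deep inside ${\cal C}_\eps$ in the $\hat{m}$ direction, any displacement that leaves ${\cal C}_\eps$ must be large. Assuming $\|\hat{m}\| = 1$, a point $y$ lies outside ${\cal C}_\eps$ precisely when $y\cdot\hat{m} \leq \|y\|\cos\eps$; a short case analysis on the sign of $(z + x_n)\cdot \hat{m}$ and on whether $z\cdot \hat{m} < -s_n/2$ produces a constant $c_\eps > 0$ depending only on $\eps$ such that $z + x_n \notin {\cal C}_\eps$ forces $\|z\| \geq c_\eps s_n - M$. Since $\|x_n\|^2 \leq s_n^2 + M^2$, the sum above is bounded by a constant multiple of $(s_n^2 + 1)\sum_{\|z\| \geq c_\eps s_n - M}(\|z\|^2 + 1)\exp(-\delta\|z\|)$, whose exponential tail dominates the polynomial prefactor $s_n^2$ as $n\to\infty$. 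This geometric estimate, rather than any probabilistic input, is the main obstacle; once it is in hand, the rest is routine bookkeeping with the large-deviation tail of Lemma~\ref{example_lemma3a}.
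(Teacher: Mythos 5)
Your proof is correct and follows essentially the same route as the paper's: reduce to the event $\{\tau_\eps<\tau_{{\cal K}(\theta_2,\hat{v})}\}$ via the vanishing of ${\cal V}$ outside ${\cal C}(\theta_2,\hat{v})$, bound ${\cal V}$ quadratically, apply Lemma~\ref{example_lemma3a}, and exploit that $x_n$ is at distance at least $c_\eps (x_n\cdot\hat m)-M$ from the complement of ${\cal C}_\eps$. The only cosmetic difference is that the paper keeps the sharper bound ${\cal V}(y)\le C(\|Pr(y)\|^2+1)$ and absorbs the polynomial factor using $\|y-x_n\|\ge\|Pr(y)\|-\|Pr(x_n)\|$, whereas you use the cruder $\|y\|^2$ bound and let the exponential tail over $\|z\|\ge c_\eps s_n-M$ dominate the resulting $s_n^2$ prefactor; both work.
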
 
\begin{proof} 
Recall  that ${\cal V}(y) ~=~ 0$ for any $y\in\R^d\setminus{\cal C}(\theta_2,\hat{v})$. Hence,  on the event $\tau_{{\cal K}(\theta_2,\hat{v})}\leq \tau_\eps$,
\[
{\cal V}\left(\hat{X}\bigl(\tau_\eps\wedge\tau_{{\cal K}(\theta_2,\hat{v})}\bigr)\right) ~=~ 0,
\]
and using \eqref{e3p},
\begin{align*}
\E_{x_n} \left({\cal V}\left(\hat{X}\bigl(\tau_\eps\wedge\tau_{{\cal K}(\theta_2,\hat{v})}\bigr)\right)\right) &= \E_x \left({\cal V}\left(\hat{X}\bigl(\tau_\eps\bigr)\right); \;  \tau_{{\cal K}(\theta_2,\hat{v})} > \tau_\eps\right) \\
&\leq~ C\, \E_{x_n} \left((\|Pr(\hat{X}\bigl(\tau_\eps\bigr))\|^2 + 1); \;  \tau_{{\cal K}(\theta_2,\hat{v})} > \tau_\eps\right) \\
&\leq~ C\, \E_{x_n} \left((\|Pr(\hat{X}\bigl(\tau_\eps\bigr))\|^2+1); \; \tau_\eps < +\infty\right) \\ 
\end{align*} 
where by Lemma~\ref{example_lemma3a}, 
\begin{align*}
\E_{x_n} \!\Bigl((\|Pr(\hat{X}\bigl(\tau_\eps\bigr))\|^2+1); \, &\tau_\eps < +\infty\Bigr) ~=~ \!\!\!\sum_{y\in M\Z^d\setminus{\cal C}_\eps} \sum_{t=1}^\infty (\|Pr(y)\|^2 +1) \P_{x_n}\!(\hat{X}\bigl(t\bigr) = y,  \tau_\eps = t) \\
&\leq~ \sum_{y\in M\Z^d\setminus{\cal C}_\eps} \sum_{t=1}^\infty (\|Pr(y)\|^2 +1) \P_{x_n}(\hat{X}(t) ~=~ y) \\
&\leq~ C \sum_{y\in M\Z^d\setminus{\cal C}_\eps} ~\sum_{t=1}^\infty (\|Pr(y)\|^2 +1)\exp( -\delta t - \delta \|y-x_n\| ) \\
&\leq~ C (1- e^{-\delta})^{-1} \sum_{y\in M\Z^d\setminus{\cal C}_\eps} (\|Pr(y)\|^2 +1)\exp( - \delta \|y-x_n\| )
\end{align*}  
Using the inequalities 
\[
 \|y-x_n\| ~\geq~ \|Pr(y) - Pr(x_n)\| ~\geq~ \|Pr(y)\| - \|Pr(x_n)\|,
\]
\[
\sup_{y\in\R^d} \left((\|Pr(y)\| +1) \exp\left(-\frac{\delta}{3} \|Pr(y)\|\right)\right) ~\leq~ \sup_{\ \gamma > 0} (1+\gamma)e^{\delta\gamma/3} ~\leq~ 3/\delta, 
\]
and  
\[
\sup_{n} \|Pr(x_n)\| ~<~+\infty, 
\]
we obtain therefore 
\begin{align*}
\E_{x_n} \bigl((\|Pr(\hat{X}\bigl(\tau_\eps\bigr))\|^2+1); \, \tau_\eps < +\infty \bigr)  \leq C' \sum_{y\in M\Z^d\setminus{\cal C}_\eps} \exp\left( - \frac{2}{3} \delta \|y-x_n\|\|\right)
\end{align*} 
with some $C' > 0$. Since 
\[
\sup_{y\in \R^d\setminus{\cal C}_\eps} \exp\left( - \frac{\delta}{3} \|y-x_n\| \right) ~=~ \exp\left( - \frac{\delta}{3} \dist(x_n,\partial{\cal C}_\eps) \right),
\] 
and
\[
\sum_{y\in M\Z^d} \exp\left( - \frac{\delta}{3} \|y-x_n\| \right) ~=~ \sum_{y\in M\Z^d} \exp\left( - \frac{\delta}{3} \|y\| \right) ~<~+ \infty, 
\]
this proves that 
\[
\E_{x_n} \left((\|Pr(\hat{X}\bigl(\tau_\eps\bigr))\|^2+1); \, \tau_\eps < +\infty\right)  ~\leq~ C'  \exp\left( - \frac{\delta}{3} \dist(x_n,\partial{\cal C}_\eps )\right) 
\]
with some $C'' > 0$. Since under the hypotheses of Lemma~\ref{example_lemma4}, $\lim_n \dist(x_n,\partial{\cal C}_\eps) ~=~+ \infty$, le last inequality proves \eqref{e6}. 
\end{proof} 
Now we are ready to complete the proof of Theorem~\ref{main_result_example_th1}. 

\noindent
By Proposition~\ref{Denisov_Wachtel} there is a point $x_0\in{\cal K}(\theta_2,\hat{v})$ such that 
\[
\inf_{x\in x_0 + {\cal K}(\theta_2,\hat{v})} {\cal V}(x) \geq {\cal V}(x_0) > 0.
\]
Choose a sequence $(x_n)\in \left({\cal C}_\eps\cap M\Z^d\right)^\N$ such that for any $n\geq 1$, $Pr(x_n) \in x_0+{\cal K}(\theta_2,\hat{v})$, 
\[
\sup_{n} \|Pr(x_n)\| ~<~+\infty  \quad \text{and} \quad \lim_{n\to\infty} x_n\cdot m ~=~+ \infty. 
\]
Then according to the definition of the function ${\cal V}:{\cal C}(\theta_2,\hat{v})\to\R_+$, 
\[
\inf_n {\cal V}(x_n) ~= ~\inf_n {\cal V}(Pr(x_n)) ~\geq~ {\cal V}(x_0) > 0, 
\]
and consequently, by Lemma~\ref{example_lemma3}, for any $\eps > 0$ ans $n\in\N$ large enough,
\[
{\cal V}(x_n)  -   \E_{x_n} \left({\cal V}\left(\hat{X}\bigl(\tau_\eps\wedge\tau_{{\cal K}(\theta_2,\hat{v})}\bigr)\right)\right) ~>~ 0.
\]
By Proposition~\ref{prop5_2} applied with $S(t) = Pr(X(t))$ and $\tau^*=\tau_\eps$, from this it follows that for any $\eps > 0$ ans $n\in\N$ large enough,
\be\label{e7}
\E_{x_n}(\tau_\eps\wedge\tau_{\cal K}(\theta_2,\hat{v})) ~=~ +\infty. 
\ee
Since the boundary of the cone ${\cal C}$  is $C^1$, according to the definition of the cone ${\cal C}(\theta_2,\hat{v})$, there is $\eps > 0$ such that 
\[
{\cal C}_\eps \cap {\cal C}(\theta_2,\hat{v}) ~\subset~{\cal C}, 
\]
and consequently, for any $x\in M\Z^d \cap {\cal C}_\eps \cap {\cal C}(\theta_2,\hat{v})$, $\P_x$-a.s. 
$
\hat\tau ~\geq~ \tau_\eps\wedge\tau_{{\cal K}(\theta_2,\hat{v})}$. Using \ref{e7} from this it follows that for $n$ large enough, $\E_{x_n}(\hat\tau) = +\infty$, and since under the hypotheses our theorem, the random walk $(\hat{X}(t))$ is irreducible in $\hat{E}={\cal C}\cap M\Z^d$, this proves that $\E_x(\hat\tau) = + \infty$ for all $x\in\hat{E}$. Theorem~\ref{main_result_example_th1} is therefore proved.

\section{Proof of Theorem~\ref{main_result_example_th2}.}\label{th2_example_proof} 
Under the hypotheses of Theorem~\ref{main_result_example_th2}, for any $\alpha\in\partial_+ D$ the twisted random walk $(X_\alpha(t))$ with transition probabilities $p_\alpha(x,y) = \exp(\alpha\cdot (y-x)) \mu(y-x)$ satisfies the conditions (B0)-(B3), and consequently,   by Theorem~\ref{main_result_example_th1},  for any $x\in E = {\cal C}\cap \Z^d$, $\E_x(\tau_\alpha) = + \infty$. According to the definition of the boundary set $\partial_+^\infty S$, from this it follows that $\partial_+^\infty S = \partial_+ S$. When combined with Theorem~\ref{theorem_4}, this result proves that  for any $q\in S_+$, the function  $k_q$  is a  finite,  non-zero  and harmonic  for $(Z(t))$, and that for any sequence of points $(y_n)\in({E})^\N$ with $\lim_n\|y_n\| = \infty$ and $\lim_n y_n/\|y_n\| = q$, \, \eqref{theorem_4_e1} holds.

\bibliographystyle{amsplain}

\providecommand{\bysame}{\leavevmode\hbox to3em{\hrulefill}\thinspace}
\providecommand{\MR}{\relax\ifhmode\unskip\space\fi MR }
\providecommand{\MRhref}[2]{%
  \href{http://www.ams.org/mathscinet-getitem?mr=#1}{#2}
}
\providecommand{\href}[2]{#2}

\end{document}